\numberwithin{equation}{section}
\newtheorem{theorem}{Theorem}[section] 
\newtheorem{proposition}[theorem]{Proposition} 
\newtheorem{lemma}[theorem]{Lemma} 
\newtheorem{corollary}[theorem]{Corollary} 
\theoremstyle{definition} 
\newtheorem{definition}[theorem]{Definition} 
\newtheorem{remark}[theorem]{Remark} 
\newtheorem{problem}[theorem]{Problem}
\newcommand\Bcal{\mathcal{B}} 
\newcommand\Ccal{\mathcal{C}}
\newcommand\Pcal{\mathcal{P}}
\newcommand\Ascr{\mathscr{A}} 
\newcommand\Cscr{\mathscr{C}}
\newcommand\Oscr{\mathscr{O}}
\newcommand\B{\mathbb{B}} 
\newcommand\C{\mathbb{C}} 
\newcommand\D{\overline{\mathbb D}} 
\newcommand\CP{\mathbb{CP}} 
\renewcommand\D{\mathbb D}
\newcommand\N{\mathbb{N}} 
\newcommand\R{\mathbb{R}}
\newcommand\Z{\mathbb{Z}}
\newcommand\igot{\mathfrak{i}}
\renewcommand\igot{\mathfrak{i}}
\renewcommand\imath{\igot}
\newcommand\hra{\hookrightarrow}
\newcommand\wt{\widetilde} 
\newcommand\wh{\widehat} 
\newcommand\di{\partial} 
\newcommand\dibar{\overline\partial}
\newcommand\dist{\mathrm{dist}}
\newcommand\Id{\mathrm{Id}}
\newcommand\supp{\mathrm{supp}}
\newcommand\Hom{\mathrm{Hom}}
\newcommand\Lin{\mathrm{Lin}}
\newcommand\Jscr{\mathscr{J}}
\newcommand\Oscrc{\overline{\mathscr{O}}}
\newcommand\Oscrcl{\Oscrc_{\mathrm{loc}}}
\newcommand\boldA{\mathbf A}
\def\dist{\mathrm{dist}}
\def\Ell1{\mathrm{Ell_1}} 
\def\CEll1{\mathrm{CEll_1}}
\def\Jst{J_{\mathrm{st}}} 
\begin{document} 

\title{Runge and Mergelyan theorems on families of \\ 
open Riemann surfaces} 

\author{Franc Forstneri{\v c}}

\address{Faculty of Mathematics and Physics, University of Ljubljana, Jadranska 19, 1000 Ljubljana, Slovenia}

\address{Institute of Mathematics, Physics, and Mechanics, Jadranska 19, 1000 Ljubljana, Slovenia}

\email{franc.forstneric@fmf.uni-lj.si}

\subjclass[2020]{Primary 32Q56, 32H02; secondary 32E10, 53A10}

\date{31 January 2025. This version: 24 May 2026} 

\keywords{Riemann surface, Runge theorem, Mergelyan theorem, Oka manifold, Stein manifold, minimal surface}

\begin{abstract}
Given a smooth open oriented surface \(X\), endowed with 
a family of complex structures \(\{J_b\}_{b\in B}\) of some 
H\"older class and depending 
continuously or smoothly on the parameter \(b\) in a suitable
topological space \(B\), we construct continuous or smooth families 
\(F_b:X\to Y\), \(b\in B\), of \(J_b\)-holomorphic maps to any 
Oka manifold \(Y\), with approximation on a suitable family 
of compact Runge sets in \(X\). 
Along the way, we prove Runge and Mergelyan approximation 
theorems and Weierstrass interpolation theorem for functions
on such families. We include applications to the construction 
of families of directed holomorphic immersions 
and conformal minimal immersions to Euclidean spaces.
\end{abstract}

\maketitle

\setcounter{tocdepth}{1}
\tableofcontents


%
%

\section{Introduction and main results}\label{sec:intro} 

In this paper, we construct families of holomorphic maps from families 
of open Riemann surfaces to Oka manifolds. Our main results are applied 
to the construction of families of directed holomorphic immersions 
and of conformal minimal immersions from families of open 
Riemann surfaces to Euclidean spaces.
We expect further applications of the techniques developed in the paper. 

A complex manifold $Y$ is said to be an Oka manifold 
if maps from any Stein manifold $X$ 
to $Y$ satisfy all forms of the homotopy principle, called the 
Oka principle in this context. Basically, this means that holomorphic maps 
$X\to Y$ satisfy the same approximation and extension properties 
as holomorphic functions $X\to \C$ in the absence of topological obstructions; see \cite[Theorem 5.4.4]{Forstneric2017E} for a summary
statement. Oka manifolds appear naturally in 
many existence results in complex geometry. 
Every complex homogeneous manifold and, more
generally, every Gromov elliptic manifold is an Oka manifold 
(see Grauert \cite{Grauert1958MA} and Gromov \cite{Gromov1989}).
Further examples and properties of Oka manifolds can be found in \cite{Forstneric2017E,Forstneric2023Indag,ForstnericKusakabe2025,ForstnericLarusson2011,ForstnericLarusson2025MRL,Larusson2004,Larusson2005} 
and in other sources.

It is often desirable to construct families of holomorphic maps 
depending continuously or smoothly on parameters.
For maps from a Stein space $X$ with a fixed complex structure, 
see \cite[Theorem 2.8.4]{Forstneric2017E} for the parametric 
Cartan--Oka--Weil theorem for functions $X\to\C$ and 
\cite[Theorem 5.4.4]{Forstneric2017E} for the parametric Oka principle
for maps from $X$ to any Oka manifold. In those results, the maps depend 
continuously on the parameter in a compact Hausdorff space $B$, 
and they remain unchanged for parameter values in a closed subset 
of $B$ for which they are already holomorphic on all of $X$. 

In the present paper, we consider the more general situation 
where not only the maps but also the complex structures 
on the source manifold depend on a parameter 
in a suitable topological space. 
In this first work on the subject, we limit ourselves to the 
case of a smooth open oriented surface $X$  
endowed with a family $\Jscr=\{J_b\}_{b\in B}$ of complex structures.
Recall that every open Riemann surface is a Stein manifold
according to Behnke and Stein \cite{BehnkeStein1949}.
A compact set $K$ in an open Riemann surface $(X,J)$
is called {\em Runge} if the complement $X\setminus K$ has no
relatively compact connected components.
Our main result, Theorem \ref{th:Oka}, is an Oka principle saying
that for suitable parameter spaces $B$ and regularity conditions
on the family of complex structures $\Jscr=\{J_b\}_{b\in B}$ on $X$, 
every family of continuous maps $f_b: X \to Y$ $(b\in B)$ 
to an Oka manifold $Y$ 
is homotopic to a family of $J_b$-holomorphic maps $F_b:X\to Y$, 
with approximation on a suitable family of compact Runge subsets 
$K_b\subset X$ on which the given maps $f_b$ are already holomorphic.  
Our method also applies to families of products of open 
Riemann surfaces with a fixed Stein manifold; see Theorem \ref{th:Okabis}. 
Furthermore, we obtain Mergelyan-type theorems for families 
of maps to complex manifolds; see Theorems \ref{th:Mergelyan-manifold} 
and \ref{th:Mergelyan-admissible}. 
On the way to the main results, we obtain approximation of functions 
on families of open Riemann surfaces --- 
the Runge Theorem \ref{th:Runge} and the Mergelyan Theorem 
\ref{th:Mergelyan}. Their proofs are simpler 
since one can use partitions of unity,
instead of dealing with homotopies as we must do 
for nonlinear target manifolds, 
and in this case $B$ may be an arbitrary paracompact Hausdorff space. 
In particular, these two results apply to the universal family
of complex structures on a given smooth open orientable surface.

Our main results, combined with techniques of Gromov's convex integration theory, enable the construction of families of holomorphic curves 
with prescribed conformal types having additional properties 
(immersed, directed by a conical subvariety of $\C^n$, etc.), 
and of families of immersed minimal surfaces  
with prescribed conformal types in Euclidean spaces $\R^n$ for $n\ge 3$.
A sample of such applications is given in Section \ref{sec:directed},
where we also indicate several further problems which can likely 
be treated by these methods. 

%
%
We now turn to the detailed presentation.
Let $X$ be a smooth, connected, orientable surface 
with a countable base of topology, 
which will be an open surface in most results. 
We endow $X$ with a smooth Riemannian metric, which is used
to define H\"older spaces of functions or maps from domains in $X$;
see \eqref{eq:alphanorm}. A complex structure on $X$ 
is given by an endomorphism $J$ of its tangent bundle $TX$ 
satisfying $J^2=-\Id$. Thus, $J$ is a section of the smooth vector bundle
$T^*X\otimes TX\to X$ whose fibre over 
$x\in X$ is the space $\Hom(T_xX,T_xX)$ 
of linear maps $T_x X\mapsto T_xX$. 
(Note that this bundle is trivial if $X$ is an open orientable surface.)
A differentiable function $f:X\to\C$ is said to be {\em $J$-holomorphic} 
if the Cauchy--Riemann equation 
$df_x \circ J_x = \imath\, df_x$ holds for every $x\in X$, where
$\imath=\sqrt{-1}$. We say that $J$ is of local H\"older class 
$\Cscr^{(k,\alpha)}$ for some $k\in\Z_+=\{0,1,2,\ldots\}$ and $0<\alpha<1$
if for every relatively compact domain $\Omega\subset X$, the restriction
$J|_\Omega\in \Gamma^{(k,\alpha)}(\Omega,T^*\Omega\otimes T\Omega)$ 
is a section of class $\Cscr^{(k,\alpha)}(\Omega)$ of the restricted vector 
bundle $T^*\Omega\otimes T\Omega\to \Omega$. For such $J$, 
there is an atlas $\{(U_i,\phi_i)\}_i$ of 
open sets $U_i\subset X$ with $\bigcup_{i} U_i=X$ and 
$J$-holomorphic charts $\phi_i :U_i \to \phi_i(U_i)\subset \C$ 
of class $\Cscr^{(k+1,\alpha)}(U_i)$; see Theorem \ref{th:isothermal}.
Since the transition maps $\phi_i\circ\phi_j^{-1}$
are biholomorphic in the standard structure $\Jst$ on $\C$, 
$J$ determines on $X$ the structure of a Riemann surface
$(X,J)$ whose underlying smooth structure is $\Cscr^{(k+1,\alpha)}$ 
compatible with the smooth structure on $X$. 
%
%
In fact, the inverse of a diffeomorphism of local class 
$\Cscr^{(k+1,\alpha)}$ is again of the same class; 
see Norton \cite{Norton1986} and 
Bojarski et al.\ \cite[Theorem 2.1]{BojarskiAll2005}.

Let $l\in \Z_+$ be an integer, and let $B$ be a topological space
which is a manifold of class $\Cscr^l$ if $l\in \N=\{1,2,\ldots\}$.  
A family $\Jscr=\{J_b\}_{b\in B}$ of complex structures on $X$
is said to be of class $\Cscr^{l,(k,\alpha)}$ if 
for any relatively compact domain $\Omega\Subset X$ the map 
$B\ni b\mapsto J_b|_{\Omega}\in \Gamma^{(k,\alpha)}(\Omega,T^*\Omega\otimes T\Omega)$ is of class $\Cscr^l$.
Such a family $\Jscr$ can equivalently be given by 
a family $\{\mu_b\}_{b\in B}$ of maps from 
$X$ to the unit disc $\D=\{\zeta\in\C:|\zeta|<1\}$
of the same smoothness class $\Cscr^{l,(k,\alpha)}$; 
see Section \ref{sec:Beltrami}.
Following Kodaira and Spencer \cite{KodairaSpencer1958}
and Kirillov \cite{Kirillov1976}, the collection $\{(X,J_b)\}_{b\in B}$
is called a {\em family of Riemann surfaces} of class 
$\Cscr^{l,(k,\alpha)}$. Consider the projection $\pi:Z=B\times X\to B$. 
We endow each fibre $X_b=\pi^{-1}(b)$ 
with the complex structure $J_b$. 
A map $f:B\times X\to Y$ to a complex manifold $Y$ 
is said to be {\em $\Jscr$-holomorphic} 
if the map $f_b=f(b,\cdotp):X_b\to Y$ is $J_b$-holomorphic
for every $b\in B$. Assuming that the family $\Jscr$ is of class
$\Cscr^{l,(k,\alpha)}$, the space $Z=B\times X$ admits fibre 
preserving $\Jscr$-holomorphic charts of class $\Cscr^{l,(k+1,\alpha)}$ 
with values in $B\times \C$ (see Theorem \ref{th:Beltrami1}).
If $0<l\le k+1$ then $Z=B\times X$ endowed 
with such an atlas is a {\em mixed manifold} of class $\Cscr^l$
in the sense of Douady \cite{Douady1962} and 
Jurchescu \cite{Jurchescu1979,Jurchescu1988RRMPA},
and a Levi-flat CR manifold of CR-dimension one in the 
sense of the Cauchy--Riemann geometry; 
see \cite{MongodiTomassini2016} and \cite{BaouendiEbenfeltRothschild1999}. 
In Jurchescu's papers, maps which are holomorphic 
on complex leaves of a mixed manifold are called {\em morphic}, 
while in CR geometry they are called CR maps.

Recall that a topological space is said to be paracompact if every 
open cover has an open locally finite refinement. 
A Hausdorff space is paracompact if and only if it admits a 
locally finite partition of unity subordinate to any 
open cover. Every metrisable space is
Hausdorff and paracompact \cite{StoneAH1948,RudinME1969}.

Our first result, which is proved in Section \ref{sec:Runge}, 
extends the classical Runge--Behnke--Stein approximation theorem 
on open Riemann surfaces \cite{Runge1885,BehnkeStein1949}, 
combined with the Weierstrass--Florack interpolation theorem 
\cite{Weierstrass1885,Florack1948},
to families of open Riemann surfaces.

%
%
\begin{theorem}
\label{th:Runge}
Assume that $l\in \Z_+$, $B$ is a paracompact Hausdorff space if $l=0$ 
and a manifold of class $\Cscr^l$ if $l>0$,
$X$ is a smooth open surface, 
$\Jscr=\{J_b\}_{b\in B}$ is a family of complex structures on $X$ 
of class $\Cscr^{l,(k,\alpha)}$ $(k\in \Z_+,\ l\le k+1,\ 0<\alpha<1$), 
$K$ is a compact Runge subset of $X$, 
$A$ is a closed discrete subset of $X$,
$U\subset B\times X$ is an open set containing $B\times (K\cup A)$,  
$f:U\to\C$ is a function of class $\Cscr^{l,0}$ 
such that $f_b=f(b,\cdotp)$ is $J_b$-holomorphic on 
$U_b=\{x\in X:(b,x)\in U\}$ for every $b\in B$, and $r\in \{0,1,\ldots,k+1\}$.
Then, $f$ is of class $\Cscr^{l,(k+1,\alpha)}$ on $U$ and there is a function 
$F:B\times X\to\C$ of class $\Cscr^{l,(k+1,\alpha)}$ satisfying the
following conditions.
\begin{enumerate}[\rm (a)]
\item The function $F_b=F(b,\cdotp):X\to\C$ is $J_b$-holomorphic
for every $b\in B$.
\item F approximates $f$ as closely as desired 
in the fine $\Cscr^{l,(k+1,\alpha)}$ topology on $B\times K$.
\item $F_b-f_b$ vanishes to order $r$ at every point $a\in A$
for every $b\in B$.
\end{enumerate}
\end{theorem}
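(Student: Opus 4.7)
My plan is to exhaust $X$ by an increasing sequence of compact Runge subsets and construct $F$ inductively along the exhaustion, with each step combining a local-in-$B$ parametric Runge-Behnke-Stein construction with a partition-of-unity patching on the parameter space $B$. The reason this works for maps to $\C$, without the full homotopy machinery required later for Oka targets, is that $J_b$-holomorphy is a linear condition in the function and local in $x$, so a partition of unity on $B$ preserves fibrewise holomorphy.

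As a preliminary step, I promote the regularity of $f$ from $\Cscr^{l,0}(U)$ to $\Cscr^{l,k+1}(U)$: in the fibre-preserving $X$-holomorphic charts supplied by Theorem \ref{th:Beltrami1} (of class $\Cscr^{l,(k+1,\alpha)}$), the equation $\dibar_{J_b}f_b=0$ reads as a standard $\dibar$-equation with coefficients of class $\Cscr^l$ in $b$, so parametric Schauder estimates give the claimed smoothness. Then I extend $f$ to some $\tilde f\in \Cscr^{l,k+1}(B\times X)$ agreeing with $f$ near $B\times(K\cup A)$ via an $x$-cutoff independent of $b$. Next I fix an exhaustion $K=K_0\Subset K_1\Subset\cdots$ of $X$ by compact Runge subsets with $\bigcup_n K_n=X$, where passing from $K_n$ to $K_{n+1}$ is either a noncritical enlargement or the attachment of a single $1$-handle of a smooth Morse exhaustion of $X$; being Runge is topological, so the exhaustion is independent of $\{J_b\}$.

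The main inductive step is: given $F_n\in\Cscr^{l,k+1}(B\times X)$ fibrewise $J_b$-holomorphic on a neighborhood of $K_n$ and interpolating $f$ to order $r$ on $A$, construct $F_{n+1}$ with the same properties for $K_{n+1}$. For each $b_0\in B$, Theorem \ref{th:Beltrami1} furnishes a neighborhood $V_{b_0}\subset B$ and fibre-preserving identifications reducing the problem over $V_{b_0}$ (on a fixed compact subset of $X$ containing $K_{n+1}$) to a smoothly $b$-dependent Beltrami equation over the single Riemann surface $(X,J_{b_0})$. The classical Runge-Behnke-Stein and Weierstrass-Florack constructions on $(X,J_{b_0})$, carried out with $b\in V_{b_0}$ as a smooth parameter, then yield $G^{b_0}\in\Cscr^{l,k+1}(V_{b_0}\times X)$ fibrewise $J_b$-holomorphic near $K_{n+1}$, approximating $F_n$ on $V_{b_0}\times K_n$, and interpolating $f$ to order $r$ on $A$. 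The noncritical step amounts to a $\dibar$-correction on an annulus (handled by the Cauchy transform) and the critical step is a Mittag-Leffler contribution across a $1$-handle; both are given by linear $\dibar$-solution operators that depend smoothly on the Beltrami coefficients of nearby $J_b$, giving the required smooth $b$-dependence.

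To globalize in $b$, I use paracompactness of $B$ to choose a locally finite refinement $\{V_\alpha\}$ of $\{V_{b_0}\}_{b_0\in B}$ with a subordinate $\Cscr^l$ partition of unity $\{\chi_\alpha\}$, and set
\[ F_{n+1}(b,x)=\sum_\alpha \chi_\alpha(b)\,G^\alpha(b,x). \]
Since $\chi_\alpha(b)$ is constant in $x$ and $\dibar_{J_b}$ is $\C$-linear, $F_{n+1}$ is fibrewise $J_b$-holomorphic on a neighborhood of $K_{n+1}$; since $\sum_\alpha\chi_\alpha=1$, interpolation on $A$ is preserved and the approximation error on $K_n$ is bounded by the worst local error. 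With tolerances chosen to decay geometrically, $(F_n)$ converges in the fine $\Cscr^{l,k+1}$-topology on $B\times X$ to the required $F$. The main obstacle is the local parametric step, i.e., producing $G^{b_0}$ with smooth dependence on $b$; once that is in place, the partition-of-unity patching and the Runge exhaustion are standard.
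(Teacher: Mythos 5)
Your overall architecture --- reduce locally in the parameter to a fixed reference structure, apply the classical Runge--Behnke--Stein and Weierstrass--Florack theorems there, glue in $b$ by a $\Cscr^l$ partition of unity (legitimate because $\dibar_{J_b}$ is $\C$-linear and the cutoffs are constant in $x$), and induct over an exhaustion of $X$ by compact Runge sets --- is the same as the paper's. The gap is at the point you yourself call the main obstacle, the local parametric step, which you then treat as settled by asserting that the relevant linear $\dibar$-solution operators ``depend smoothly on the Beltrami coefficients of nearby $J_b$''. The joint regularity of $(\mu,\phi)\mapsto P_\mu(\phi)$ in the H\"older scales you need is exactly what the paper flags as not well understood (Remark \ref{rem:CGoperators}); the only result of that type proved there, Theorem \ref{th:dibar}, loses $2l$ space derivatives, which would not give fine $\Cscr^{l,k+1}$ approximation along your route. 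The paper circumvents this as follows: by Theorem \ref{th:Beltrami2} (not Theorem \ref{th:Beltrami1}, which only produces a $\mu$-conformal immersion into $\C$, not an identification inside $X$) one obtains, near each $b_0$, biholomorphisms $\Phi_b:(\Omega,J_b)\to(\Phi_b(\Omega),J_{b_0})$ fixing the relevant finitely many points of $A$, so the problem becomes approximation of a $\Cscr^{l,0}$ family of holomorphic functions on a moving family of Runge compacts in the \emph{fixed} surface $(X,J_{b_0})$; there one uses $b$-independent integral kernels (Lemma \ref{lem:Ck}: kernel representation, Riemann sums, Oka--Weil, partition of unity in $b$) to get $\Cscr^{l,s}$ approximation for $s$ large, and then composes back with $\Phi$, which costs nothing because only $\Cscr^{l,k+1}$ regularity is claimed. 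To make your sketch a proof you must either establish the parameter regularity of the operators $P_{\mu_b}$ that you invoke, or replace that step by this transfer argument.

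Two further omissions. First, the regularity upgrade $f\in\Cscr^{l,k+1}(U)$ is not a routine ``parametric Schauder'' matter: it is exactly where the hypothesis $l\le k+1$ enters (Lemma \ref{lem:Xholomorphic}: the structures $J_b$ are only $\Cscr^{k+1}$-compatible with the smooth structure, and the chain-rule bootstrap stops at $l=k+1$), and the same compatibility bound is the reason for the restriction $r\le k+1$ in condition (c), which you never address. Second, exact $r$-jet interpolation at the points of $A$ for every $b$ needs a device beyond ``carry out Weierstrass--Florack parametrically'': in the paper, $\Phi_b$ is arranged to fix the points of $A\cap L$, and after approximating one subtracts the uniquely determined member of a finite-dimensional holomorphic family $\xi_t$ realizing all $r$-jets at those points, with $t=t(b)$ depending $\Cscr^l$ on $b$; a parametric approximation alone only gives approximate jets, and your partition-of-unity gluing preserves interpolation only if each local piece interpolates exactly. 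Finally, the Morse/handle decomposition and the ``Mittag-Leffler contribution across a $1$-handle'' are superfluous for $\C$-valued targets, since Behnke--Stein applies directly on each Runge compact; this is harmless but obscures that no period conditions arise here.
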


The reason for assuming $l\le k+1$ will become 
evident in the proof of Lemma \ref{lem:Xholomorphic}.
As explained above, the complex structures $J_b$ in the theorem 
are compatible with one another only to order $k+1$, which necessitates 
the assumption $r\le k+1$ in condition (c). 
See also Corollary \ref{cor:Runge} for approximation on 
variable families of compact $J_b$-convex subsets $K_b\subset X$,
$b\in B$.

%
%
\begin{remark}\label{em:Cartanmanifold}
If $B$ is a manifold of class $\Cscr^l$ then 
Theorems \ref{th:Runge} and \ref{th:Beltrami1}
imply that the manifold $B\times X$, endowed with the
family of complex structures $\{J_b\}_{b\in B}$ 
as in the theorem, is a {\em Cartan manifold} 
of class $\Cscr^{l}$ in the sense of Jurchescu 
\cite[Sect.\ 6]{Jurchescu1988RRMPA}; see also his papers
\cite{Jurchescu1979,Jurchescu1983,Jurchescu1984,Jurchescu1988AUB}
as well as \cite{FlondorPascu1983,Vajaitu2001}. 
Cartan manifolds are analogues of Stein manifolds in 
the category of mixed manifolds. In the cited papers, 
Cartan manifolds are assumed to be of class $\Cscr^\infty$ 
in order to avoid problems which appear under the finite 
regularity assumptions, such as those in the present paper.
\end{remark}

A more sophisticated approximation theorem was proved 
by Mergelyan \cite{Mergelyan1951} in 1951. Its original version  
says that a continuous function on a compact Runge set $K\subset \C$ 
which is holomorphic in the interior $\mathring K$ of $K$ 
is a uniform limit on $K$ of holomorphic polynomials. 
In view of Runge's theorem, 
the main new point is that every function in the algebra
$\Ascr(K)$ of continuous functions on $K$ which are holomorphic 
in $\mathring K$ can be approximated 
by functions holomorphic on open neighbourhoods
of $K$. A compact set $K$ in a Riemann surface $X$ for which this
condition holds is said to have the {\em Mergelyan property}.
For results on this subject we refer to the surveys 
\cite[Sect.\ 2]{FornaessForstnericWold2020} and \cite{Gaier1987}.
The following Mergelyan theorem for families of 
complex structures on a smooth surface is proved in 
Section \ref{sec:Runge}.  

%
%
\begin{theorem}
\label{th:Mergelyan}
Assume that $X$ is a smooth oriented surface without boundary, 
$B$ is a paracompact Hausdorff space, 
$\Jscr=\{J_b\}_{b\in B}$ is a continuous family of 
complex structures on $X$ of H\"older class 
$\Cscr^\alpha$ for some $0<\alpha<1$, 
$K$ is compact set in $X$ such that, for some $c>0$ and a 
Riemannian distance function on $X$, each relatively compact 
connected component of $X\setminus K$ has diameter at least $c$,
$A$ is a finite subset of $\mathring K$, 
and $f: B\times K \to\C$ is a continuous function  
such that $f_b=f(b,\cdotp):K\to \C$ 
is $J_b$-holomorphic on $\mathring K$ for every $b\in B$.
Given a continuous function $\epsilon:B\to (0,+\infty)$, 
there is a continuous $\Jscr$-holomorphic function $F$ on a 
neighbourhood $U\subset B\times X$ of $B\times K$ 
such that for every $b\in B$ we have 
$\sup_{x\in K}|F_b(x)-f_b(x)|<\epsilon(b)$ and 
$F_b-f_b$ vanishes to order $1$ in every point $a\in A$.
\end{theorem}

The condition on the set $K$ in the above theorem ensures
that $K$ has the Mergelyan property with respect to any complex
structure on $X$; see Bishop \cite{Bishop1958PJM}.
See also Corollary \ref{cor:Mergelyan} for Mergelyan approximation
on certain variable families of holomorphically convex sets $K_b\subset X$,
$b\in B$. 

%
%
\begin{remark}\label{rem:universal}
The $l=0$ case of Theorems \ref{th:Runge} and \ref{th:Mergelyan} applies 
to the universal or tautological family on $X$. 
(I wish to thank the referee for pointing this out.) 
The universal family has parameter space $B_u$ consisting of pairs 
$(J,f)$, where $J$ is a complex structure 
on $X$ of class $\Cscr^{(k,\alpha)}$ and $f$ is a 
continuous function on $K$, $J$-holomorphic 
on a neighbourhood of $K$ (in case of Theorem \ref{th:Runge}) 
or on the interior of $K$ (in case of Theorem \ref{th:Mergelyan}). 
The fibre of the universal family over $b=(J,f)\in B_u$ is the 
Riemann surface $(X,J)$. There is a natural metrizable topology on $B_u$,
so it is Hausdorff and paracompact by a theorem of Stone \cite{StoneAH1948}
(see also Rudin \cite{RudinME1969}).  
Any family $\{(J_b, f_b)\}_{b\in B}$ as in Theorem \ref{th:Runge}
or \ref{th:Mergelyan} can be pulled back from this universal family. 
A solution of the approximation problem for the universal family will 
therefore pull back to a solution for a general family, at least if $\epsilon$ 
is constant. For this result, one does not need any assumption on 
the topological space $B$. 
\end{remark}

We now introduce the parameter spaces used in 
our main result, Theorem \ref{th:Oka}. 

%
%
\begin{definition}\label{def:ENR}
In the following, all topological spaces are assumed to be metrizable.
\begin{enumerate}[\rm (i)] 
\item A space $B$ is an {\em absolute neighbourhood
retract} (ANR) if, whenever $B$ is a closed subset of a space $B'$,
then $B$ is a retract of a neighbourhood of $B$ in $B'$
(a neighbourhood retract).
\item A space $B$ is a {\em Euclidean neighbourhood
retract} (ENR) if it admits a closed 
topological embedding $\iota: B\hra \R^n$ 
for some $n$ whose image $\iota(B)\subset \R^n$ 
is a neighbourhood retract.
\item 
A space $B$ is a {\em local ENR}
if every point of $B$ has an ENR neighbourhood. 
\end{enumerate}
\end{definition}

We refer to Marde\v  si\'c \cite{Mardesic1999} for the theory of ANRs and ENRs.
Clearly, every local ENR is locally compact.
The class of ENRs includes many geometrically
relevant classes of spaces such as certain CW complexes.
Note that CW complexes generalise both manifolds and simplicial complexes, 
and they have particular significance for algebraic topology; 
see Hatcher \cite{Hatcher2002} and May \cite{May1999}. 
A CW complex is locally compact if and only if its  
collection of closed cells is locally finite, if and only if
it is metrizable (see Fritsch and Piccinini \cite[Theorem B]{FritschPiccinini1990}). 
If a CW complex $B$ can be embedded in a Euclidean space
$\R^m$, then $B$ has at most countably many cells, it is locally compact 
and has dimension at most $m$ \cite[Theorem D]{FritschPiccinini1990}.
Conversely, every countable locally compact CW-complex $B$ of finite dimension $m$ is an ENR. 
Indeed, by \cite[Theorem A]{FritschPiccinini1990}  
such $B$ admits a closed embedding in $\R^{2m+1}$. 
Since every metrizable CW-complex is an ANR \cite{Dugundji1952}, 
the image of the embedding is a neighbourhood retract, 
so $B$ is an ENR. In particular, every finite CW complex is an ENR  
\cite[Corollary A.10]{Hatcher2002}.

A topological space is said to be {\em $\sigma$-compact} if it is 
the union of countably many compact subspaces.
According to Michael \cite{Michael1957PAMS},
every locally compact and $\sigma$-compact Hausdorff
space is paracompact.  

The main result of this paper is the following Oka principle with approximation
for maps from families of open Riemann surfaces to Oka manifolds.
It is proved in Section \ref{sec:Oka}. 

%
%
\begin{theorem} \label{th:Oka}
Assume the following:
\begin{enumerate}[\rm (a)]
\item $B$ is a 
$\sigma$-compact Hausdorff local ENR (see Definition \ref{def:ENR}). 
In particular, $B$ may be a finite CW complex or a 
countable locally compact CW-complex of finite dimension.
\item $X$ is a smooth open surface and $\pi:B\times X\to B$ is
the projection. 
\item $\Jscr=\{J_b\}_{b\in B}$ is a continuous family of complex structures on 
$X$ of H\"older class $\Cscr^\alpha$, $0<\alpha<1$.  
\item $K\subset B\times X$ is a closed subset such that the projection 
$\pi|_K:K\to B$ is proper, and for every $b\in B$ the fibre 
$K_b=\{x\in X:(b,x)\in K\}$ is a compact Runge set in $X$, 
possibly empty. (In the sequel, such a set $K$ is called a proper Runge set.)
\item $Y$ is an Oka manifold endowed 
with a distance function $\dist_Y$ inducing its manifold topology.
\item $f:B\times X \to Y$ is a continuous map, 
and there is an open set $U\subset B\times X$ containing $K$ such that 
$f_b=f(b,\cdotp):X\to Y$ is $J_b$-holomorphic on 
$U_b=\{x\in X:(b,x)\in U\}$ for every $b\in B$.
\end{enumerate}
Given a continuous function $\epsilon:B\to (0,+\infty)$,  
there are a neighbourhood $U'\subset U$ of $K$ and a homotopy 
$f_t:B\times X\to Y$ $(t\in I=[0,1])$ satisfying the following conditions.
\begin{enumerate}[\rm (i)]
\item $f_0=f$.
\item The map $f_{t,b}=f_t(b,\cdotp):X\to Y$ is $J_b$-holomorphic
on $U'_b\supset K_b$ for every $b\in B$ and $t\in I$.
\item $\sup_{x\in K_b}\dist_Y(f_t(b,x),f(b,x))<\epsilon(b)$
for every $b\in B$ and $t\in I$.
\item The map $F=f_1$ is such that 
$F_b=F(b,\cdotp):X\to Y$ is $J_b$-holomorphic for every $b\in B$.
\item If $Q$ is a closed subset of $B$ and $U_b=X$ for all $b\in Q$,
then the homotopy $f_{t,b}$ $(t\in I)$ 
can be chosen to be fixed for every $b\in Q$,
and in particular $F=f$ on $Q\times X$.
\end{enumerate}
If $B$ is a manifold of class $\Cscr^l$ $(l\in \N)$, 
the set $Q$ in (v) is a closed $\Cscr^l$ submanifold of $B$, 
the family $\Jscr=\{J_b\}_{b\in B}$ is of class $\Cscr^{l,(k,\alpha)}$
where $l\le k+1$, and the map $f:B\times X\to Y$ is 
$\Jscr$-holomorphic on a neighbourhood 
$U$ of $K$ and $f|_U \in \Cscr^{l,0}(U,Y)$,  
then $f|_U\in \Cscr^{l,(k+1,\alpha)}(U,Y)$ and there is a homotopy 
$f_t:B\times X\to Y$ $(t\in I)$ which is of class $\Cscr^{l,(k+1,\alpha)}$ 
on a neighbourhood of $K$, it satisfies conditions (i)--(v), 
$f_t$ approximates $f$ in the fine $\Cscr^{l,(k+1,\alpha)}$-topology 
on $K$ to a desired precision uniformly in $t\in I$,  
and $F=f_1:B\times X\to Y$ is of class  
$\Cscr^{l,(k+1,\alpha)}(B\times X,Y)$ and $\Jscr$-holomorphic. 
\end{theorem}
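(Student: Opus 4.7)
The plan is to reduce the theorem to the scalar-valued approximation theorems already proved (Theorems \ref{th:Runge} and \ref{th:Mergelyan-admissible}) by exhausting $X$ with compact Runge sets and using the spray description of Oka manifolds. Fix a smooth Morse exhaustion $\rho:X\to[0,\infty)$ and regular values $0<c_1<c_2<\cdots\to\infty$ so that each $L_j=\{\rho\le c_j\}$ is a compact Runge subset of $X$, and hence of every $(X,J_b)$ since the Runge property is topological. Since $\pi|_K:K\to B$ is proper, after enlarging the $L_j$ I may arrange that the fibres $K_b$ over any prescribed compact in $B$ are contained in $L_1$. The goal is to construct inductively a sequence $f^{(j)}:B\times X\to Y$ with $f^{(0)}=f$ such that $f^{(j)}$ is $X$-holomorphic on a neighbourhood of $K\cup B\times L_j$, and each $f^{(j+1)}$ approximates $f^{(j)}$ on $K\cup B\times L_j$ and is joined to it through $X$-holomorphic maps near that set with a small prescribed error, the homotopies being parameterised on successive subintervals $[1-2^{-j},1-2^{-j-1}]$ of $I=[0,1]$. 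In the limit I obtain the homotopy $f_t$ and the final map $F=f_1$.

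The reduction from $Y$-valued to $\C$-valued approximation goes through the spray characterisation of Oka manifolds (see \cite[Ch.~5]{Forstneric2017E}): $Y$ admits a dominating family of holomorphic sprays $s_\nu:E_\nu\to Y$ on holomorphic vector bundles $E_\nu\to Y$ whose differentials along the zero section span $TY$ pointwise. Given an $X$-holomorphic reference map $g:V\to Y$ on an open set $V\subset B\times X$, any $X$-holomorphic map sufficiently close to $g$ over $V$ is of the form $s_\nu\circ\sigma$ for a small $X$-holomorphic section $\sigma$ of the pulled-back bundle $g^*E_\nu\to V$; after local trivialisation, approximating such $\sigma$ reduces to approximating $\C^N$-valued $X$-holomorphic functions, which is exactly the content of Theorem \ref{th:Runge}. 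Composing the approximating section with $s_\nu$ lifts the result back to a $Y$-valued approximation. When only continuous data are available on the new piece being added to $L_j$, the same reduction is applied with Theorem \ref{th:Mergelyan-admissible} in place of Theorem \ref{th:Runge}.

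The inductive step has two cases, following the standard Oka--Weil induction for surfaces. In the \emph{noncritical case} one writes $L_{j+1}=L_j\cup B'$ for a compact bump $B'$ lying in a fibre-preserving $X$-holomorphic chart of Theorem \ref{th:Beltrami1}; using a spray on $Y$ to reduce to the scalar case and a parametric Cartan-type splitting lemma on the bump pair $(L_j\cap B',\,L_j\cup B')$, one glues the given approximation on $L_j$ with a new approximation on $B'$ into a single $X$-holomorphic approximation on a neighbourhood of $L_{j+1}$. In the \emph{critical case} $L_{j+1}=L_j\cup E$, where $E$ is an arc (index $1$) or a disc (index $0$) attached to $L_j$ along a lower-dimensional face; one first extends $f^{(j)}$ continuously to a neighbourhood of the new piece, using path-connectedness of $Y$ and the local ENR structure of $B$ to do this continuously in $b$ while matching the existing data, and then applies Theorem \ref{th:Mergelyan-admissible} to the admissible set $L_j\cup E$. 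The global gluing in $b$ is carried out by a partition of unity on $B$ whose elementary pieces are combined inside the fibres of a spray on $Y$; it is here that the local ENR property is used to ensure continuous gluing across parameter values, and the condition that $f$ be preserved on $Q\times X$ is maintained by choosing all local approximations and homotopies to vanish there, permitted because $Q$ is closed.

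For the smooth refinement, I track $\Cscr^l$-dependence on $b$ at every step: the charts of Theorem \ref{th:Beltrami1} are of class $\Cscr^{l,(k+1,\alpha)}$, the approximations of Theorem \ref{th:Runge} are of class $\Cscr^{l,k+1}$ and live in the fine $\Cscr^{l,k+1}$-topology, partitions of unity on the $\Cscr^l$-manifold $B$ are of class $\Cscr^l$, sprays are holomorphic hence smooth in all variables, and the constraint $l\le k+1$ carries over without change. The main obstacle I expect is the parametric handle-attachment step in the critical case: extending the family of $X$-holomorphic maps continuously (or smoothly) in $b\in B$ across a topology change of the sublevel set while simultaneously matching a prescribed approximation on the existing data and holding the map fixed for $b\in Q$. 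It is precisely here that the local ENR hypothesis on $B$ is indispensable, enabling the obstruction-theoretic extension from $Q$ to a neighbourhood and the globalisation in $b$ via a partition of unity combined with spray convex combinations on $Y$.
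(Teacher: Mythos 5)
Your overall scheme (exhaust $X$ by compact Runge sets, straighten the structures locally in $b$ via Theorem \ref{th:Beltrami1}/\ref{th:Beltrami2}, induct over noncritical and critical extensions) is in the spirit of the paper, but the two steps that carry the real content have genuine gaps. First, your reduction to the scalar Theorems \ref{th:Runge} and \ref{th:Mergelyan-admissible} rests on the claim that an Oka manifold $Y$ admits a dominating family of holomorphic sprays $s_\nu:E_\nu\to Y$; that is the definition of a \emph{subelliptic} manifold, and it is not known that every Oka manifold is subelliptic, so this tool is not available. What does exist for an arbitrary $Y$ is a dominating spray \emph{over a given holomorphic map} from a Stein source (via Siu's Stein neighbourhood theorem and a Docquier--Grauert retraction, exactly as in Lemma \ref{lem:graph}), but such sprays only parametrize maps uniformly close to the reference map. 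Consequently they cannot effect the essential passage from the merely continuous map $f$ to a holomorphic one on the new pieces of the exhaustion -- that is precisely where the Oka property of $Y$ must enter, and your sketch never says how; citing the Oka-target case of Theorem \ref{th:Mergelyan-admissible} would be circular, since in the paper that statement is deduced \emph{from} Theorem \ref{th:Oka}.

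Second, your globalization in the parameter -- ``a partition of unity on $B$ whose elementary pieces are combined inside the fibres of a spray on $Y$'' -- does not work as stated: maps into $Y$ cannot be averaged, and combining local-in-$b$ solutions inside spray fibres requires all of them to be uniformly close to a single reference map, which nothing in your construction arranges. The paper's proof avoids both problems by a different central idea: the local ENR hypothesis is used to regard a parameter neighbourhood as a retract of an open set in $\R^n\subset\C^n$, the fibrewise-holomorphic data are extended and approximated by maps genuinely holomorphic on a Stein neighbourhood in $\C^n\times X$ (Lemmas \ref{lem:Ck}, \ref{lem:graph}), and then the \emph{nonparametric} Oka principle for the higher-dimensional Stein source $\C^n\times X$ (\cite[Theorem 5.4.4]{Forstneric2017E}) is applied -- this complexification of the parameter is exactly why the author notes the argument does not extend to Oka-1 targets. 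The globalization over $B$ is then done not by summing local solutions but by a stepwise induction over a cover $P_0^j\subset P_1^j\subset P^j$ in which each modification is a small perturbation supported over $P_1^j$ and is spliced using a cut-off in the homotopy time variable, $t\mapsto t\xi(b)$; condition (v) over $Q$ is obtained by the same device. Without the parameter complexification and this cut-off-in-time mechanism (or a proved parametric Cartan-type splitting with variable structures and ENR parameters, which you assert but do not supply), your induction does not close.
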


The proof of Theorem \ref{th:Oka}, given in Section \ref{sec:Oka}, 
also applies to products $X\times Z$, 
where $X$ is a smooth open surface endowed with 
a family $\Jscr=\{J_b\}_{b\in B}$ of complex structures as above 
and $Z$ is a Stein manifold with a fixed 
complex structure; see Theorem \ref{th:Okabis}. 
Essentially the same proof, combined with Theorem \ref{th:Mergelyan}, 
yields Mergelyan approximation for maps from families of open 
Riemann surfaces to an arbitrary complex manifold; 
see Theorems \ref{th:Mergelyan-manifold} 
and \ref{th:Mergelyan-admissible}.

%
%
\begin{remark}\label{rem:Q}
Condition in (v) of Theorem \ref{th:Oka} can be strenghtened as follows. 
\begin{enumerate}[\rm (v')]
\item If $Q$ is a closed subset of $B$ and $f$ is $\Jscr$-holomorphic 
on $U\cup (Q\times X)$ then the homotopy $f_{t,b}$ $(t\in I)$ 
can be chosen to be fixed for every $b\in Q$,
and in particular $F=f$ on $Q\times X$.
\end{enumerate}
Thus, we need not assume that the open set $U\subset B\times X$ contains 
$Q\times X$. The sets $K$ and $U$ may be empty if we are not 
interested in an interpolation statement. 
The proof of (v') requires a minor modification in the induction step 
and is given (in the more general context of tame families of Stein structures
on a smooth manifold $X$ of any dimension)
in \cite[proof of Theorems 6.1 and 6.3]{ForstnericSigurdardottir2025}.
The same remark applies to Theorem \ref{th:Runge}. 
\end{remark}

Model Oka manifolds are the complex Euclidean spaces $\C^n$.
For $Y=\C^n$, Theorem \ref{th:Oka} generalises  
the approximation statement in Theorem \ref{th:Runge}
to any proper Runge subset $K\subset B\times X$. This generalisation
holds for any paracompact Hausdorff parameter space $B$ 
if we replace condition (d) in Theorem \ref{th:Oka} by the
condition that the fibres $K_b$ of $K$ are upper
semicontinuous with respect to $b\in B$ and $K_b$ is 
Runge in $(X,J_b)$ for every $b\in B$; 
see Definition \ref{def:Runge} and Corollary \ref{cor:Runge}.
If $B$ is locally compact then these two conditions are 
easily seen to be equivalent.
We do not know whether the interpolation statement 
in Theorem \ref{th:Runge} carries over to the more general setting
in Theorem \ref{th:Oka}. 

The condition on the parameter space $B$ to be a local ENR 
enables us to reduce the proof of Theorem \ref{th:Oka} to the Oka principle
in \cite[Theorem 5.4.4]{Forstneric2017E}.
It is likely that Theorem \ref{th:Oka} holds for more general parameter spaces.
Such a generalisation would require a proof from 
the first principles, developing the gluing techniques of Oka theory 
(see \cite[Chapter 5]{Forstneric2017E}) on Levi-flat CR 
foliations and laminations. We shall not pursue this approach 
in the present paper. 

The Oka theory has recently been developed for maps from 
open Riemann surfaces to the class of Oka-1 manifolds,  
which properly contains the class of Oka manifolds; see
\cite{AlarconForstneric2025MZ,ForstnericLarusson2024Oka1}.
However, I do not know whether this bigger class 
of manifolds can be used in Theorem \ref{th:Oka} since its proof relies on 
the Oka principle for maps from higher dimensional Stein manifolds
to Oka manifolds. 

%
%
Theorem \ref{th:Oka} and the related results in Section \ref{sec:Oka}  
apply in particular if $B$ is a 
Teichm\"uller space $T(g,k)$ of $k$-punctured compact Riemann
surfaces of genus $g$ and $\Jscr=\{J_b\}_{b\in B}$ is the associated 
universal family of complex structures on $X$. 
The space $T(g,k)$ is finite dimensional and 
\[
	(X,J_b)=\wh X_b \setminus \{p_1(b),\ldots,p_k(b)\},\quad 
	b\in T(g,k), 
\]
where $\wh X_b$ is a compact Riemann surface
with the complex structure $J_b$ determined by $b\in T(g,k)$ 
and the pairwise disjoint punctures $p_i(b)$ $(i=1,\ldots,k)$ 
are holomorphic sections of the universal family 
$\pi:\wh V(g,k)\to T(g,k)$. (See Nag \cite[pp.\ 322--323]{Nag1988}
or Imayoshi and Taniguchi \cite{ImayoshiTaniguchi1992}.) 
The open subset
$ 
	V(g,k) = \wh V(g,k)\setminus \bigcup_{i=1}^k p_i(T(g,k))
$ 
of the complex manifold $\wh V(g,k)$ is the universal family 
of $k$-punctured compact Riemann surfaces of genus $g$. If 
$2g+k\ge 3$ then the Teichm\"uller family $\pi:V(g,k)\to T(g,k)$ 
is the universal object in the category of topologically 
marked holomorphically varying families of $k$-punctured 
compact Riemann surfaces of genus $g$ 
(see \cite[Theorem 5.4.3]{Nag1988}).
It is shown in \cite{Forstneric2025BLMS} that $V(g,k)$ is a Stein manifold.
This gives analogues of the main results of this paper 
for holomorphic maps $V(g,k)\to Y$ to any Oka manifold $Y$.
Furthermore, holomorphic functions on $V(g,k)$ which are algebraic
on every fibre of the Teichm\"uller submersion $\pi:V(g,k)\to T(g,k)$
are dense in the space of all holomorphic functions
\cite[Theorem 3.4]{Forstneric2025BLMS}, and they provide
an affine embedding of $V(k,n)$ over any relatively compact domain
in $T(g,n)$ \cite[Theorem 3.1]{Forstneric2025BLMS}. 
However, the setting in Theorem \ref{th:Oka} is more general than the 
one in Teichm\"uller theory since the complex structures 
in a given family need not be quasiconformally equivalent.
For example, a punctured Riemann surface can be a member
of a family in which the punctures develop into boundary curves 
and vice versa. Our results concerning approximation of functions,
such as Theorems \ref{th:Runge} and \ref{th:Mergelyan}, 
also hold on infinite dimensional Teichm\"uller spaces, which 
are known to be metrisable and hence paracompact Hausdorff; 
see the surveys by Fletcher and Markovi\'c \cite{FletcherMarkovic2009} 
and Markovi\'c and \v Sari\'c \cite{MarkovicSaric2010}
for this topic.

%
%
The results and methods developed in the paper can be
used in constructions of families of holomorphic
curves with special properties and of related objects, such as 
conformal minimal surfaces. To illustrate the point, we give two such 
applications in Section \ref{sec:directed}. 
The first one, in Theorem \ref{th:directed}, gives families
of $J_b$-holomorphic immersions $X\to\C^n$
directed by an irreducible conical complex subvariety 
$\overline A=A\cup \{0\} \subset \C^n$ such that 
$A=\overline A\setminus \{0\}$ is an Oka manifold. 
By taking $A=\C^n_*$ we obtain families of 
ordinary immersions $X\to\C^n$. For $n=1$ this
gives an extension of the Gunning--Narasimhan theorem
\cite{GunningNarasimhan1967}
to families of $J_b$-holomorphic immersions $X\to \C$;
see Corollary \ref{cor:GunningNarasimhan}. 

Another major application pertains to 
the null cone $\boldA\subset\C^n$ for $n\ge 3$, see \eqref{eq:nullq}.
The real and the imaginary part of holomorphic immersions 
$X\to\C^n$ directed by the null cone are conformally  
immersed minimal surfaces $X\to\R^n$.  
We thus obtain continuous or smooth
families of conformally immersed minimal surfaces 
$X\to\R^n$, $n\ge 3$, for any continuous or smooth family
of complex structures $J_b$ on $X$ (see Corollary \ref{cor:minimal}).
Several other possible applications 
are indicated in Problem \ref{prob:problems}.
A common feature of these examples is that their construction
combines Oka theory with methods from Gromov's convex 
integration theory to ensure the period vanishing conditions of the 
derivative maps on a basis of the first homology group $H_1(X,\Z)$.

%
%
The paper is organised as follows. In Section \ref{sec:Beltrami} we recall 
the connection between Riemannian metrics, conformal structures, 
and the Beltrami equation. Section \ref{sec:CB} contains preparatory 
results on the Cauchy and Beurling transforms on open Riemann surfaces.
In Section \ref{sec:QC} we obtain results  
on deformations of complex structures which are used in the proofs.
Theorem \ref{th:Beltrami1} gives a solution of the Beltrami equation 
on any smoothly bounded relatively compact domain $\Omega$ 
in an open Riemann surface $X$ for Beltrami coefficients 
$\mu:\Omega\to \D=\{\zeta\in \C:|\zeta|<1\}$ 
with small H\"older $\Cscr^{(k,\alpha)}(\Omega)$ norm 
$(k\in \Z_+,\ 0<\alpha<1)$, with analytic dependence on $\mu$.
It is then shown in Theorem \ref{th:Beltrami2} that any sufficiently small  
$\Cscr^{(k,\alpha)}$ perturbation of the complex structure on 
$\Omega$ can be realised by a small $\Cscr^{(k+1,\alpha)}$ 
diffeomorphic perturbation of $\Omega$ in $X$, 
with analytic dependence of the map on the complex structure. 
This extends the Ahlfors--Bers theory \cite{AhlforsBers1960} of 
quasiconformal maps of the plane. 

With these tools in hand, 
Theorems \ref{th:Runge} and \ref{th:Mergelyan} 
are proved in Section \ref{sec:Runge}.

In Section \ref{sec:Oka} we prove our main result, 
Theorem \ref{th:Oka}, and obtain further Runge and 
Mergelyan type approximation results for families of 
manifold-valued maps; see Theorems \ref{th:Okabis}, 
\ref{th:Mergelyan-manifold}, and \ref{th:Mergelyan-admissible}.

In Section \ref{sec:trivialisation} we 
show that for a family of complex structures 
on a smooth open surface, the family of their 
holomorphic cotangent (canonical) bundles admits a 
family of holomorphic trivialisations (see Theorem \ref{th:thetab}),
which can be given by a family of holomorphic immersions to $\C$
(see Corollary \ref{cor:GunningNarasimhan}).

Finally, in Section \ref{sec:directed} we apply 
our results to the construction of families of directed
holomorphic immersions and conformal minimal immersions
to Euclidean spaces.

%
%

We conclude this introduction by mentioning some recent developments 
in which the results of an earlier version of the present paper were 
used in an important way.

%
%
A standard application of the Runge--Behnke--Stein approximation 
theorem on open Riemann surfaces and, 
more generally, of the Oka--Weil theorem on Stein manifolds,  
is the global solvability of the $\dibar$-equation for $(0,1)$-forms.
Using Theorem \ref{th:Runge} and the techniques developed
in Sections 3 and 4, I obtained in \cite{Forstneric2026CAS}
an optimal solution of the $\dibar$-equation on families 
of open Riemann surfaces with the gain one of derivative 
in the space variable and without any loss of regularity in the parameter;
 
Note that solvability of the tangential $\dibar$-complex in all bidegrees 
on $\Cscr^\infty$ smooth Cartan manifolds was shown by Jurchescu \cite[Sect.\ 3]{Jurchescu1994}. See also his paper 
\cite{Jurchescu1988AUB} for the approximation theorems.

%
%
Open Riemann surfaces are Stein manifolds of complex
dimension one. The Oka theory for continuous tame 
families of sufficiently smooth 
integrable Stein structures $\{J_b\}_{b\in B}$ 
on smooth manifolds $X$ of dimension $\ge 4$
was developed by Sigur{\dh}ard{\'o}ttir and the author
in \cite{ForstnericSigurdardottir2025}. 
(Every family of complex structures on a 
surface is tame, but there are nontame families of Stein structures 
in higher dimensions; see \cite[Sect.\ 4]{ForstnericSigurdardottir2025}.)
The construction relies on the approach developed in 
this paper and uses in particular Lemmas 
\ref{lem:Ck} and \ref{lem:main}. 
The use of Theorem \ref{th:Beltrami2} is replaced by 
a parametric version of Hamilton's theorem \cite{Hamilton1977}  
on strongly pseudoconvex domains, whose proof relies
on the work by Greene and Krantz \cite{GreeneKrantz1982} 
on stability of Kohn's solutions of the $\dibar$-equation 
under small integrable deformations of the Stein structure 
on a strongly pseudoconvex domain; 
see \cite[Theorem 3.1]{ForstnericSigurdardottir2025}. 
%

%
%
In \cite{DrinovecKalisnik2026}, Drinovec Drnov{\v s}ek and
Kali{\v s}nik proved that for any smooth open surface $X$ 
endowed with a family of complex structures $\{J_b\}_{b\in B}$ 
depending continuously on the parameter $b$ in a metrisable space $B$, 
there is a continuous family of proper holomorphic maps 
$F_{b}:(X,J_b)\to\mathbb C^{2}$, $b\in B$. 
Their result provides a partial affirmative 
answer to the first part of Problem \ref{prob:problems} (a). 
Several further applications of the results and techniques of
this paper were obtained by Alarc\'on and the author
in \cite{AlarconForstneric2026proper} (May 2026); 
see the discussion following Problem \ref{prob:problems}. 


%
%
%
%
\section{Riemannian metrics, complex structures, and the Beltrami equation}
\label{sec:Beltrami}

In this section, we recall the relevant background on the topics 
mentioned in the title. The details can be found in standard texts
on quasiconformal mappings and Teichm\"uller spaces; see e.g.\  
Ahlfors \cite{Ahlfors2006}, Lehto and Virtanen \cite{LehtoVirtanen1973}, 
Nag \cite{Nag1988}, and Imayoshi and Taniguchi \cite{ImayoshiTaniguchi1992}.

Let $z=x+\imath y$ be the complex coordinate on $\C$. 
Set $\di_x=\di/\di x$, $\di_y=\di/\di y$,
$dz=dx+\imath dy$, $d\bar z=dx-\imath dy$, 
\[
	\di_z= \frac{\di}{\di z}
	=\frac12 \left(\di_x-\imath \di_y\right),
	\qquad
	\di_{\bar z}=\frac{\di}{\di \bar z}
	=\frac12 \left(\di_x+\imath \di_y \right).
\]
For a differentiable function $f$ we shall write
$f_z =\di_z f$ and $f_{\bar z}=\di_{\bar z}f$. 
Note that $f$ is holomorphic if and only if $f_{\bar z}=0$.
The exterior differential on functions 
splits in the sum of its $\C$-linear and $\C$-antilinear parts:
$
	d=\di  + \dibar  = \di_z  dz + \di_{\bar z} d\bar z.
$

A Riemannian metric on a smooth surface $X$ is given 
in any local coordinates $(x,y)$ by 
\begin{equation}\label{eq:metric}
	g= E dx\otimes dx + F(dx\otimes dy+dy\otimes dx) + G dy\otimes dy  
	  = E dx^2+ 2F dxdy + Gdy^2, 
\end{equation}
where $E,F,G$ are real functions satisfying $EG-F^2>0$.
The area form determined by the metric $g$ is $\sqrt{EG-F^2}\, dx\wedge dy$.
The Euclidean metric and the area form on $\R^2\cong\C$ 
with the coordinate $z=x+\imath y$ are given by 
$g_{\rm st}=dx^2+dy^2 = |dz|^2$ and  
$dx\wedge dy = \frac{\imath}{2} dz\wedge d\bar z$.
On every tangent space $T_pX$, a Riemannian metric $g$  
defines a scalar product having the matrix $\bigl({E \ F \atop F \ G}\bigr)$ 
in the basis $\di_x,\ \di_y$. Hence, $g$ determines 
a unique conformal structure on $X$, and two Riemannian metrics 
$g_1,g_2$ determine the same conformal structure if and only if 
$g_2=\lambda g_1$ for a positive function $\lambda$.
A pair of nonzero tangent vectors $\xi,\eta\in T_p X$ 
is said to be a conformal frame if $\xi$ and $\eta$ have the same 
$g$-length and are $g$-orthogonal to each other. 
If $X$ is oriented, there is a unique endomorphism 
$J:  TX \to TX$ on the tangent bundle of $X$
such that for any tangent vector
$0\ne v \in T_p X$, $(v,Jv)$ is a positively oriented $g$-conformal frame.
Note that $J^2= -\Id$; an endomorphism of $TX$ satisfying this
condition is called an {\em almost complex structure} on $X$. 
We have the following local expression for the matrix of $J$ 
(in the standard oriented basis $\di_x,\ \di_y$) in 
terms of the metric $g$ \eqref{eq:metric}:
\begin{equation}\label{eq:matrixofJ}
	[J] = \frac{1}{\sqrt {EG-F^2}} 
	\left(
	\begin{matrix} -F & - G \\ E & F \end{matrix}
	\right)
	= 
	\left(
	\begin{matrix} -b & - c \\ (b^2+1)/c & b \end{matrix}
	\right)
\end{equation}
where $\delta =EG-F^2>0$, $b=F/{\sqrt \delta}$, and 
$c=G/{\sqrt \delta}>0$.
Every almost complex structure $J$ is of this form for
some Riemannian metric $g$, which is unique up to conformal
equivalence. The standard almost complex structure $\Jst$ on $\C$, 
defined by the Euclidean metric $g_{\rm st}$, has the matrix
$\bigl({0 \ \, -1 \atop 1 \ \ \ 0}\bigr)$. In complex notation, 
$\Jst$ amounts to multiplication by $\imath$. 
A differentiable function $f:U\to \C$ on a domain $U\subset X$ is 
said to be $J$-holomorphic (more precisely, $(J,\Jst)$-holomorphic) if
it satisfies the Cauchy--Riemann equation 
$df_p \circ J_p = \Jst \circ df_p$ at all points $p\in U$,
where $J_p$ denotes the restriction of $J$ to $T_p X$.
At a point where $df_p\ne 0$, such $f$ is an orientation preserving 
conformal map from the conformal structure on $X$ determined by 
$J=J_g$ to the standard conformal structure on $\C$.  

Assume that the metric $g$ is given in local coordinates $(x,y)$ on 
an open set $U\subset X$ by \eqref{eq:metric}. Taking $z=x+\imath y$ 
as a complex coordinate on $U$, we can write $g$ in 
the complex form as 
\begin{equation}\label{eq:metric-complex}
	g = \lambda|dz+\mu d\bar z|^2
\end{equation}
for a positive function $\lambda>0$ and the complex function 
\begin{equation}\label{eq:mu}
	\mu = \frac{1-c+\imath b}{1+c+\imath b}: U \to\D
\end{equation}
with values in the unit disc, where the numbers $b$ and $c$ are as in
\eqref{eq:matrixofJ}; see \cite[p.\ 51]{AlarconForstnericLopez2021}.
A diffeomorphism $f:U\to f(U)\subset \C$ is conformal from 
the $g$-structure on $X$ to the standard conformal structure on $\C$ 
if and only if $g=h |df|^2$ for a positive function $h>0$. 
A chart $f$ with this property is said to be {\em isothermal} for $g$. 
Assume that $f$ is orientation preserving, 
which amounts to $|f_z|>|f_{\bar z}|$. Then 
\[
	|df|^2 = |f_z dz + f_{\bar z}d\bar z|^2
	= |f_z|^2 \,\cdotp \Bigl|dz + \frac{f_{\bar z}}{f_z} d\bar z \Bigr|^2,
\]
and comparison with \eqref{eq:metric-complex} 
shows that $f$ is isothermal if and only if it satisfies the Beltrami equation 
\begin{equation}\label{eq:Beltrami}
	f_{\bar z} = \mu f_z
\end{equation}
with the Beltrami coefficient $\mu$ given by \eqref{eq:mu}.
We shall say that $f$ is $\mu$-conformal if \eqref{eq:Beltrami} holds.
Equivalently, $f$ is a biholomorphic map from $(U,J)$ to  $(f(U),\Jst)$
where $J$ is the complex structure on $X$ determined by $g$
(or by $\mu$). 

One can also consider quasiconformal maps $f:X\to Y$ between
a pair of Riemann surfaces. The quantity 
$\mu_f(z)=f_{\bar z}/f_z$, defined in a local holomorphic coordinate 
$z$ on $X$, is independent of the choice of the local holomorphic
coordinates on $Y$, and $\mu_f(z) d\bar z/dz$ is a section 
of the bundle $K_X^{-1}\otimes \overline K_X \to X$
where $K_X=T^*X$ is the canonical bundle of $X$
(see \cite[p.\ 46]{Nag1988}). 

\begin{remark}\label{rem:smoothness}
The formulas \eqref{eq:matrixofJ}--\eqref{eq:mu} show that the 
conformal class of a Riemannian metric $g$, the associated  
complex structure $J$, and the Beltrami coefficient $\mu$ 
are of the same smoothness class. 
\end{remark}

The situation is especially simple if we fix a reference 
complex structure on $X$, so it is an open Riemann surface.
By a theorem of Gunning and Narasimhan \cite{GunningNarasimhan1967},
such a surface admits a holomorphic immersion 
$z=u+\imath v:X\to \C$. Its differential $dz=du+\imath dv$ is a 
nowhere vanishing holomorphic 1-form on $X$ trivialising the 
canonical bundle $T^*X=K_X$, $|dz|^2= du^2+dv^2$ is a 
Riemannian metric on $X$ determining the given complex structure, 
$\frac{\imath}{2} dz\wedge d\bar z = du\wedge dv$ is the associated
area form, and $d\sigma=du\, dv$ is the surface measure on $X$.
The function $z$ provides 
a local holomorphic coordinate on $X$ at every point. 
Given a differentiable function $f:X\to \C$, its partial derivatives 
\begin{equation}\label{eq:derivatives}
	f_z = \di_z f =\di f/dz,\qquad 
	f_{\bar z} = \di_{\bar z} f =\dibar f/d{\bar z}
\end{equation}
are globally defined functions on $X$.
Any Riemannian metric $g$ on $X$ is globally of
the form \eqref{eq:metric} for some real functions $E,F,G$ on $X$. 
(However, these coefficients are not functions of $z$ unless $z$ 
is injective, in which case $X$ is a plane domain.) 
We can write $g$ in the form 
\eqref{eq:metric-complex} where the function $\mu:X\to \D$ is 
given by \eqref{eq:mu}. Conversely, any such function $\mu$ determines 
a Riemannian metric by \eqref{eq:metric-complex}, and hence
a complex structure $J_\mu$ by \eqref{eq:matrixofJ}. 
Note that $\mu=0$ corresponds to the given reference 
complex structure on $X$. This global 
viewpoint will be important in the sequel.

The study of isothermal charts on Riemannian surfaces  
is a classical subject going back 
to Lagrange and Gauss. For a H\"older continuous $\mu$,   
see Korn \cite{Korn1914}, Lichtenstein \cite{Lichtenstein1916},
and Chern \cite{Chern1955}. The existence of 
global quasiconformal homeomorphisms $\C\to\C$ 
follows from the local theorem by use of the uniformization theorem,
with direct proofs given by Ahlfors \cite{Ahlfors1955} 
and Vekua \cite{Vekua1955}. For a measurable function $\mu$ satisfying
$\|\mu\|_\infty \le k <1$
(where $\|\mu\|_\infty$ denotes the essential supremum), 
see Morrey \cite{Morrey1938} and Bojarski \cite{Bojarskij1958}. 
In this case, solutions of \eqref{eq:Beltrami} 
are $k$-quasiconformal homeomorphisms having distributional derivatives 
in $L^p$ for some $p\ge 1$. More precise results in $L^p(\C)$ spaces, 
with smooth dependence of solutions of the Beltrami
equation \eqref{eq:Beltrami} on the Beltrami coefficient $\mu$, 
are due to Ahlfors and Bers \cite{AhlforsBers1960}; 
see also Ahlfors \cite[Chapter V]{Ahlfors2006} and 
Astala et al.\ \cite{AstalaIwaniecMartin2009}.
We shall use the following result;  
see \cite[Theorem 5.3.4]{AstalaIwaniecMartin2009}
for the first part and \cite[Theorem 2.1]{BojarskiAll2005}
for the second part.

%
%
\begin{theorem} \label{th:isothermal}
An almost complex structure $J$ of H{\"o}lder class $\Cscr^{(k,\alpha)}$ 
$(k\in \Z_+,\ 0<\alpha<1)$ on a smooth surface $X$ admits a 
$J$-holomorphic chart of class $\Cscr^{(k+1,\alpha)}$ at any  
point of $X$. 
%
%
Hence, the smooth structure on $X$ determined by $J$
is $\Cscr^{(k+1,\alpha)}$ compatible with the given smooth structure. 
\end{theorem}

%
%
The following result shows that the assumptions in our main results
(Theorems \ref{th:Runge}, \ref{th:Mergelyan}, \ref{th:Oka})
are independent of the choice of the smooth structure on $X$
in the equivalence class of $\Cscr^{(k+1,\alpha)}$-equivalent structures.
It will be used in the proof of Corollary \ref{cor:Hamilton}.

\begin{proposition}\label{prop:pullback}
Assume that $X$ and $Y$ are smooth surfaces and $\phi:Y\to X$ is a
diffeomorphism of (local) class $\Cscr^{(k+1,\alpha)}$ for some
$k\in\Z_+$ and $0<\alpha<1$. 
Let $\{J_b\}_{b\in B}$ be a family of complex structures on $X$
of class $\Cscr^{l,(k,\alpha)}$. 
Then, the family of complex structures $\{J'_b=\phi^*J_b\}_{b\in B}$
on $Y$ is also of class $\Cscr^{l,(k,\alpha)}$.
Furthermore, if $f:B\times X\to \C$ is of class $\Cscr^{l,s}$
for some $s\le k+1+\alpha$ then the function $\tilde f:B\times Y\to \C$
given by $\tilde f(b,y)= f(b,\phi(y))$ is also of class $\Cscr^{l,s}$ 
on $B\times Y$.
\end{proposition}

\begin{proof}
For any point $y\in Y$ we have that 
$
	(J'_b)_{y} 	= (d\phi_y)^{-1} \circ (J_b)_{\phi(y)} \circ d\phi_y.
$
Let $A(y)$ denote the matrix of the differential $d\phi_y$ in a pair of 
smooth local trivialisations of the tangent bundle on $X$ and $Y$, and 
let $[J_{b}(x)]$ denote the matrix of $J_b$ at $x\in X$. 
The above then says that 
\[
	[J'_{b}(y)]= A(y)^{-1} [J_{b}(\phi(y))] A(y)
\]
where the operation is the matrix product.
By the assumption, the matrix $[J_{b}(x)]$ is of class 
$\Cscr^{l,(k,\alpha)}$ on $B\times X$. It is elementary to see 
that inserting $x=\phi(y)$, with $\phi$ of class $\Cscr^{(k+1,\alpha)}$, 
gives a matrix function $[J_{b}(\phi(y))]$ of the same class
$\Cscr^{l,(k,\alpha)}$ on $B\times Y$. Finally, the conjugation
by the matrix function $A(y)$ of class $\Cscr^{(k,\alpha)}$
preserves the class $\Cscr^{l,(k,\alpha)}$.
The last part of the proposition is a simple exercise.
\end{proof}

%
%
%
\section{The Cauchy and Beurling transforms on open 
Riemann surfaces}
\label{sec:CB}

In this section, we consider regularity properties
of the Cauchy and Beurling transforms 
on smoothly bounded relatively compact domains in open 
Riemann surfaces. Theorem \ref{th:PSproperties} 
is an important analytic ingredient for solving 
the Beltrami equation on such domains; see 
Theorems \ref{th:Beltrami1} and \ref{th:Beltrami2}. 
 
Let $X$ be an open Riemann surface. Fix a 
holomorphic immersion $z=u+\imath v:X\to \C$ 
(see \cite{GunningNarasimhan1967}) and let
$d\sigma=du\, dv$ denote the associated area measure on $X$.
Given a differentiable function $f:U\to \C$ on a domain
$U\subset X$, its derivatives $f_z$ and $f_{\bar z}$ 
given by \eqref{eq:derivatives} are well-defined functions on $U$.
The pullback of the Cauchy kernel $C(\zeta,z)=\frac{dz}{z-\zeta}$ on $\C$
by the immersion 
$z:X\to\C$ is a Cauchy-type kernel on $X$ with the correct behaviour 
near the diagonal $D_X=\{(x,x):x\in X\}$ (see \eqref{eq:CauchykernelX}), 
but with additional poles if $z$ is not injective. Since $D_X$ has a basis 
of Stein neighbourhoods in $X\times X$ and 
$X\times X\setminus D_X$ is also Stein, 
one can remove the extra poles by solving a Cousin problem
(see Scheinberg \cite[Lemma 2.1]{Scheinberg1978}). 
This gives a meromorphic $1$-form on $X\times X$ of the form
\begin{equation}\label{eq:Cauchykernel}
	\omega(q,x) = \xi(q,x) dz(x)\quad \text{for}\ q,x\in X, 
\end{equation}
where $dz(x)$ denotes the restriction of $dz$ to $T_x X$, 
$\xi$ is a meromorphic function on $X\times X$ which is 
holomorphic on $X\times X\setminus D_X$, and 
the 1-form $\omega(q,\cdotp)$ has a simple pole at $q\in X$ 
with residue $1$. In a neighbourhood $U\subset X\times X$ 
of $D_X$ the coefficient $\xi$ of $\omega$ is of the form
\begin{equation}\label{eq:CauchykernelX}
	\xi(q,x)= \frac{1}{z(x) - z(q)} + h(q,x), 
\end{equation}
where $h$ is a holomorphic function on $U$. 
Such Cauchy kernels were constructed by Scheinberg \cite{Scheinberg1978} 
and Gauthier \cite{Gauthier1979}, following the work 
by Behnke and Stein \cite[Theorem 3]{BehnkeStein1949}. 
(See also Behnke and Sommer \cite[p.\ 584]{BehnkeSommer1962}
and \cite[Remark 1, p.\ 141]{FornaessForstnericWold2020} for additional references.) 
Given a relatively compact smoothly bounded domain 
$\Omega\Subset X$, the usual argument 
using Stokes formula and the residue calculation 
gives the following Cauchy--Green formula 
for any $f\in\Cscr^1(\overline\Omega)$ and $q\in \Omega$:
\begin{eqnarray*}\label{eq:CF1}
	f(q) &=& \frac{1}{2\pi \imath}\int_{x\in b\Omega} f(x)\,\omega(q,x) 
	- \frac{1}{2\pi \imath} \int_{x\in \Omega}
	\overline\partial f(x)\wedge\omega(q,x) \\
	\label{eq:CF2}
	&=& \frac{1}{2\pi \imath}\int_{x\in b\Omega} f(x)\,\xi(q,x)dz(x) 
	- \frac{1}{\pi} \int_{x\in \Omega} f_{\bar z}(x) \xi(q,x) d\sigma(x).
\end{eqnarray*}
If $f$ is holomorphic in $\Omega$, we obtain the 
Cauchy representation formula 
\[ 
	f(q) = \frac{1}{2\pi \imath}\int_{x\in b\Omega} f(x)\,\omega(q,x),
	\quad q\in \Omega.
\] 
On the other hand, for a function $f\in \Cscr^1_0(X)$ with 
compact support we have 
\begin{equation}\label{eq:CF4}
	f(q) = - \frac{1}{\pi} \int_{x\in X} f_{\bar z}(x) \xi(q,x) d\sigma(x),
	\quad q\in X.
\end{equation}

To the Cauchy kernel $\omega$ we associate 
two transforms, defined for $\phi\in \Cscr_0(X)$ and $q\in X$ by 
\begin{eqnarray}\label{eq:P}
	P(\phi)(q) &=& 
	-\frac{1}{\pi} \int_{X} \phi(x) \xi(q,x) d\sigma(x), \\
	\label{eq:S}
	S(\phi)(q) &=& \di_z P(\phi)(q) 
	= -\frac{1}{\pi} \int_{X} \phi(x) \di_{z(q)} \xi(q,x) d\sigma(x). 
\end{eqnarray}
Here, $\di_{z(q)} \xi(q,x)$ denotes the $\di_z$ derivative \eqref{eq:derivatives}
of the function $\xi(\cdotp,x)$ at the point $q\in X$. 
The integral defining $S$ is understood as the Cauchy principal value
(compare with \eqref{eq:Bcal}), and the existence of $S(\phi)$ 
for any $\phi\in \Cscr_0(X)$ follows from Theorem \ref{th:PSproperties} (c). 

The operator $P$ is called the Cauchy--Green transform
associated to the Cauchy kernel \eqref{eq:Cauchykernel}. 
The integral converges absolutely, and we have that 
\[
	\di_{\bar z} \circ P = \Id = P \circ \di_{\bar z}
	\quad \text{on}\ \Cscr^1_0(X). 
\]
The second identity follows from \eqref{eq:CF4}. 
The first identity holds in a more precise form: 
for every relatively compact domain $\Omega \subset X$ 
with piecewise $\Cscr^1$ boundary, 
\begin{equation}\label{eq:propP}
	\di_{\bar z} P(\phi)=\phi \ \ 
	\text{holds on $\Omega$ for every $\phi\in \Cscr^1(\overline \Omega)$},
\end{equation}
where the integral defining $P$ is applied only over $\Omega$.
The equation \eqref{eq:propP} holds in the distributional sense
for every integrable $\phi$. It is obtained by following
the proof in the case when $\xi(q,x)=\frac{1}{x-q}$ on $X=\C$,
when $P$ equals the standard Cauchy--Green operator on $\C$:
\begin{equation}\label{eq:Ccal}
	\Ccal(\phi)(z)= 
	-\frac{1}{\pi} \int_\C \frac{\phi(\zeta)}{\zeta-z} d\sigma(\zeta),
	\quad z\in\C.
\end{equation}

The operator $S$ \eqref{eq:S} 
is an analogue of the Beurling transform $\Bcal$ 
in the plane (see \cite{Ahlfors2006} or  
\cite[p.\ 94]{AstalaIwaniecMartin2009}): 
\begin{equation}\label{eq:Bcal}
	\Bcal(\phi)(z) = -\frac{1}{\pi} \lim_{\epsilon\to 0} 
	\int_{|z-\zeta|>\epsilon} \frac{\phi(\zeta)}{(z-\zeta)^2} d\sigma(\zeta),
	\quad z\in\C. 
\end{equation}
This is a singular convolution operator of 
Calder\'on--Zygmund type with nonintegrable kernel $-1/\pi z^2$.
It extends to a bounded linear operator $L^p(\C)\to L^p(\C)$ 
for every $1<p<\infty$ (see \cite[Corollary 4.5.1]{AstalaIwaniecMartin2009}).
Its main property is that 
$
	\Bcal\circ \di_{\bar z} =\di_z
$
on $\Cscr^1_0(\C)$, so $\Bcal$ interchanges the operators 
$\di_{\bar z}$ and $\di_z$. Likewise, it follows from  
\eqref{eq:CF4}--\eqref{eq:S} that
\begin{equation}\label{eq:Sinterchanges}
	S(\phi_{\bar z}) = \di_z P(\phi_{\bar z}) = \phi_z
	\ \ \text{for every $\phi\in \Cscr^1_0(\Omega)$}.
\end{equation}

In order to understand the local regularity properties of $P$ and $S$,
we look more closely at their kernel functions $\xi(q,x)$ and $\di_{z(q)} \xi(q,x)$. 
We consider the latter one, which is more involved; the analogous 
analysis applies to the former.  
Let $U\subset X\times X$ be an open neighbourhood of the diagonal $D_X$ 
on which \eqref{eq:CauchykernelX} holds. On $U$ we have
\[
	\di_{z(q)} \xi(q,x) = \di_{z(q)} \frac{1}{z(x) - z(q)} + \di_{z(q)} h(q,x) 
	= \frac{1}{(z(x) - z(q))^2} + \di_{z(q)} h(q,x),
\]
and $\di_{z(q)} h(q,x)$ is holomorphic on $U$. Fix $q_0\in X$
and choose a neighbourhood $V\subset X$ of $q_0$ such that 
$V\times V\subset U$ and the immersion $z:X\to\C$ is injective on $V$.
Pick a smooth function $\chi:X\to[0,1]$ with $\supp \chi\subset V$ 
such that $\chi=1$ on a smaller neighbourhood $V'\Subset V$ of $q_0$. 
For $q\in V$ we have 
\begin{eqnarray*}
	S(\phi)(q) &=& -\frac{1}{\pi} \int_{X} \chi(x) \phi(x) \di_{z(q)} \xi(q,x) d\sigma(x)
	+ \frac{1}{\pi} \int_{X} (\chi(x)-1) \phi(x) \di_{z(q)} \xi(q,x) d\sigma(x) \\
	&=& S_1(\phi)(q) + S_2(\phi)(q),
\end{eqnarray*}
where the operators $S_1$ and $S_2$ are given by 
\begin{eqnarray*}
	S_1(\phi)(q) &=& 
	-\frac{1}{\pi} \int_{X} \frac{\chi(x) \phi(x) }{(z(x) - z(q))^2} d\sigma(x), \\
	S_2(\phi)(q) &=& 
	-\frac{1}{\pi} \int_{X} \chi(x) \phi(x) \di_{z(q)} h(q,x)d\sigma(x) \\ 
	&& \qquad \quad
	+ \frac{1}{\pi} \int_{X} (\chi(x)-1) \phi(x) \di_{z(q)} \xi(q,x) d\sigma(x).
\end{eqnarray*}
In the complex coordinate $z$ on $V$, $S_1(\phi)=\Bcal(\chi\phi)$ is 
the Beurling operator applied to $\chi\phi$, while $S_2$ has smooth kernel. 
The same construction can be carried out with the operator $P$.

%
%
The conclusion is that the operators $P$ and $S$ have the same 
local regularity properties as their classical models $\Ccal$ 
\eqref{eq:Ccal} and $\Bcal$ \eqref{eq:Bcal}, respectively.  

Let $\Omega$ be a relatively compact smoothly bounded 
domain in $X$. One may consider truncated operators $P$ and $S$  
defined by integration over $\Omega$.
While $P$ has the expected regularity 
on H\"older spaces, the regularity of $S$ 
fails at the boundary points of $\Omega$ since the effect of averaging 
in \eqref{eq:Ccal} is lost. To circumvent this problem,
we shall use a bounded linear extension operator, which we now describe. 

Let $\dist$ denote the distance function on a surface $X$ induced by 
a smooth Riemannian metric. We recall the basics concerning H\"older 
spaces; see Gilbarg and Trudinger 
\cite[Sect. 4.1]{GilbargTrudinger1983} for more information.
Let $\Omega$ be a domain in $X$ with piecewise $\Cscr^1$ boundary. 
For $\alpha\in (0,1)$, the H\"older 
$\Cscr^\alpha(\Omega)$ norm of a function $f:\Omega\to \C$ is given by 
\begin{equation}\label{eq:alphanorm}
	\|f\|_\alpha = \sup_{x\in\Omega} |f(x)|
	+ \sup\{ |f(x)-f(y)|/\dist(x,y)^\alpha: x,y\in\Omega,\ x\ne y\},
\end{equation} 
and the associated H\"older space is 
$
	\Cscr^{\alpha}(\Omega) = \{f:\Omega\to \C: \|f\|_\alpha<\infty\}.
$ 
Similarly we define the norm $\|f\|_{(k,\alpha)}$ for $k>0$, and the 
corresponding H\"older space $\Cscr^{(k,\alpha)}(\Omega)$, by adding 
to $\|f\|_\alpha$ in \eqref{eq:alphanorm} the $\Cscr^{\alpha}(\Omega)$ 
norms of partial derivatives of $f$ of the highest order $k$. 
In particular, $\Cscr^{\alpha}(\Omega)=\Cscr^{(0,\alpha)}(\Omega)$.
These spaces are Banach algebras with the pointwise product of functions.
%
%
Compositions of $\Cscr^{(k,\alpha)}$ maps with $k\ge 1$ are again
of the same class (but this fails for $k=0$). 
The inverse of a $\Cscr^{(k,\alpha)}$ diffeomorphism with $k\ge 1$
is of the same class (see \cite[Theorem 2.1]{BojarskiAll2005}).
Every function in $\Cscr^{(k,\alpha)}(\Omega)$ has a unique extension 
to a function in $\Cscr^{(k,\alpha)}(\overline\Omega)$. We shall need the 
following lemma. (The analogous result holds in a smooth 
Riemannian manifold $X$ of arbitrary dimension.)

%
%
\begin{lemma}\label{lem:extension}
Given a smoothly bounded relatively compact domain $\Omega\Subset X$ in 
a smooth open Riemannian surface $X$ and a domain $\Omega'\subset X$ 
containing $\overline \Omega$, there is for every $k\in\Z_+$ and $0\le \alpha<1$ 
a continuous linear extension operator 
$E: \Cscr^{(k,\alpha)}(\Omega) \to \Cscr^{(k,\alpha)}_0(\Omega')$
with range in the space of compactly supported functions in 
$\Cscr^{(k,\alpha)}(\Omega')$.
\end{lemma}


\begin{proof}
For domains in Euclidean spaces and $k\ge 1$, 
this is \cite[Lemma 6.37]{GilbargTrudinger1983}; it is clear
from the construction that one obtains a linear extension operator.
We can reduce to this case by noting that every component $S$ 
of $b\Omega$ has a neighbourhood $U\subset \Omega'$
smoothly diffeomorphic to an annulus in $\R^2$, with $S$ 
corresponding to the unit circle. 
(See Bellettini \cite[Theorem 1.18, p.\ 14]{Bellettini2013}.)
Assume now that $k=0$. Using the above notation, 
let $\tau:U\to S$ denote the smooth radial projection of the annulus 
onto the circle $S$. Set $U_+=U\setminus \Omega$, and let
$\chi:\overline \Omega\cup U\to [0,1]$ be a smooth function 
which equals $1$ on $\overline \Omega$ and the restriction
$\chi|_{U_{+}}$ has compact support. Given $f\in \Cscr(\overline \Omega)$,
we let $E(f):\overline \Omega\cup U\to\C$ be defined by 
$E(f)(x)=f(x)$ for $x\in \overline \Omega$ and $E(f)(x)=\chi(x) f(\tau(x))$
for $x\in U_+$. We perform the same construction on each of the finitely
many boundary components of $\Omega$.
\end{proof}

With the notation of Lemma \ref{lem:extension} we 
define the operators $P_\Omega$ and $S_\Omega$ on 
$\phi\in\Cscr(\overline \Omega)$ and $q\in \overline \Omega$ by 
\begin{eqnarray}\label{eq:POmega}
	P_\Omega(\phi)(q) &=& 
	-\frac{1}{\pi} \int_{x\in \Omega'} E(\phi)(x) \xi(q,x) d\sigma(x), \\
	\label{eq:SOmega}
	S_\Omega(\phi)(q) &=&  
	- \frac{1}{\pi} \int_{x\in \Omega'} E(\phi)(x) \di_{z(q)} \xi(q,x) d\sigma(x).
\end{eqnarray}

%
%
\begin{theorem} \label{th:PSproperties}
Let $X$ be an open Riemann surface with a Cauchy kernel 
\eqref{eq:Cauchykernel}, \eqref{eq:CauchykernelX}.
\begin{enumerate}[\rm (a)]
\item $P_\Omega:\Cscr^{(k,\alpha)}(\overline \Omega) 
\to \Cscr^{(k+1,\alpha)}(\overline \Omega)$ 
is a bounded linear operator for every $0<\alpha<1$ and $k\in\Z_+$, 
and it satisfies $\di_{\bar z} P_\Omega(\phi) = \phi$ 
on $\overline \Omega$ for every $\phi\in \Cscr^{\alpha}(\overline\Omega)$. 
\item 
$S_\Omega:\Cscr^{(k,\alpha)}(\overline \Omega)\to 
\Cscr^{(k,\alpha)}(\overline \Omega)$ is 
a bounded linear operator for every $k\in \Z_+$ and $0<\alpha<1$,
and it satisfies $S_\Omega(\phi)=\di_z P_\Omega(\phi)$ for every 
$\phi\in \Cscr^{(k,\alpha)}(\overline \Omega)$.
\item 
$S_\Omega$ extends to a bounded linear operator $L^p(\Omega)\to L^p(\Omega)$ 
for every $1<p<\infty$.
\end{enumerate}
\end{theorem}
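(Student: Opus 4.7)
The plan is to reduce the claim to the classical H\"older estimates for the Cauchy--Green and Beurling transforms on $\C$, using the extension operator from Lemma \ref{lem:extension} together with the local decomposition of $P$ and $S$ already worked out immediately before the theorem. By construction, $P_\Omega(\phi)(q) = P(\chi E(\phi))(q)$ and $S_\Omega(\phi)(q) = S(\chi E(\phi))(q)$ for $q\in \overline\Omega$, and the input $\Phi := \chi E(\phi)$ is a compactly supported function in $\Cscr^{(k,\alpha)}_0(X)$ whose H\"older norm is controlled by $\|\phi\|_{k,\alpha}$ (this uses Lemma \ref{lem:extension} together with the fact that $\Cscr^{(k,\alpha)}$ is a Banach algebra and $\chi$ is smooth with compact support). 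Hence the problem reduces to proving a fine $\Cscr^{(k,\alpha)}$-estimate for $P(\Phi)$ and $S(\Phi)$ on the fixed compact set $\overline\Omega$.

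For this estimate I would cover $\overline\Omega$ by finitely many coordinate neighbourhoods $V$ on which $z$ is injective, and on each such $V$ invoke the decomposition $S = S_1 + S_2$ (and the analogous one for $P$) described right before the theorem. The kernel of $S_2$ (respectively the analogue for $P$) is smooth, so these operators map any $L^1$ function to a $\Cscr^\infty$ function on $V$, with bounds depending only on the kernel and $\|\Phi\|_{L^1}$. The singular parts $S_1$ and $P_1$ are, in the local holomorphic chart $z$, precisely the classical Beurling and Cauchy--Green transforms $\Bcal$ and $\Ccal$ of the function $\chi\Phi\circ z^{-1}$ (which is still compactly supported of the same H\"older class). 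Thus it suffices to cite the classical statements
\[
	\Ccal : \Cscr^{(k,\alpha)}_0(\C)\to \Cscr^{(k+1,\alpha)}_{\mathrm{loc}}(\C),
	\qquad
	\Bcal : \Cscr^{(k,\alpha)}_0(\C)\to \Cscr^{(k,\alpha)}_{\mathrm{loc}}(\C),
\]
which are standard H\"older-regularity results for Calder\'on--Zygmund operators (see e.g.\ \cite{AstalaIwaniecMartin2009}). Patching the local estimates together over the finite cover and using a partition of unity on $\overline\Omega$ yields the claimed boundedness in both (a) and (b).

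The identities are then immediate from their global counterparts. For (a), since $\chi E(\phi)=\phi$ on a neighbourhood $W\supset\overline\Omega$, formula \eqref{eq:propP} applied to $\Phi$ on any smoothly bounded $\Omega''$ with $\overline\Omega\subset \Omega''\Subset W$ gives $\di_{\bar z} P_\Omega(\phi) = \Phi = \phi$ on $\overline\Omega$. For (b), given $\phi\in\Cscr^{1,\alpha}(\overline\Omega)$, apply the global identity \eqref{eq:Sinterchanges} to the compactly supported function $\Phi\in\Cscr^{1,\alpha}_0(X)$ (which is valid for $\Cscr^{1,\alpha}_0$ inputs by density, or directly from the distributional form $\Bcal\circ\di_{\bar z}=\di_z$ on $\Cscr^1_0$); on $\overline\Omega$ we have $\Phi_{\bar z}=\phi_{\bar z}$ and $\Phi_z=\phi_z$, and this recovers $S_\Omega(\phi_{\bar z})=\phi_z$ there.

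The main obstacle I anticipate is precisely the role of Lemma \ref{lem:extension}. Without the smooth extension $E$, the naive truncation (extending $\phi$ by zero outside $\overline\Omega$) produces a jump at $b\Omega$; although $\Ccal$ still gains the one H\"older derivative in the interior, $\Bcal$ acting on such a discontinuous extension would generally fail to be H\"older continuous up to the boundary, and the corresponding statement for $S_\Omega$ would collapse. The extension operator of Lemma \ref{lem:extension} precisely repairs this by producing a genuinely $\Cscr^{(k,\alpha)}$ function $\chi E(\phi)$ on all of $X$, so the classical compactly-supported-input estimates for $\Ccal$ and $\Bcal$ can be invoked without any loss at $b\Omega$. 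Beyond this, the argument is a bookkeeping exercise: checking that the reduction $S=S_1+S_2$ (and analogously for $P$) and the covering/partition-of-unity step preserve the H\"older bounds with constants depending only on $k$, $\alpha$, $\Omega$, the Cauchy kernel, and the operator norm of $E$.
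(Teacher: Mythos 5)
Your proposal is correct and follows essentially the same route as the paper: the paper's proof consists precisely of the reduction you describe (the extension operator of Lemma \ref{lem:extension} with a cut-off, plus the local decomposition of $P$ and $S$ into the classical operators $\Ccal$, $\Bcal$ in an injective chart and remainders with smooth kernels), followed by citing the classical H\"older estimates in \cite[Theorems 4.7.1 and 4.7.2]{AstalaIwaniecMartin2009} and the identities \eqref{eq:CF4} and \eqref{eq:Sinterchanges}. You have merely written out in detail the covering/patching bookkeeping that the paper leaves implicit in the discussion preceding the theorem.
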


\begin{proof}
We have seen above that the operators $P$ and $S$ have the same 
local regularity properties as their classical models  
$\Ccal$ \eqref{eq:Ccal} and $\Bcal$ \eqref{eq:Bcal}, respectively. 
Part (a) then follows from \cite[Theorem 4.7.2]{AstalaIwaniecMartin2009}
and \eqref{eq:CF4}, part (b) from 
\cite[Theorem 4.7.1]{AstalaIwaniecMartin2009} and \eqref{eq:SOmega},
and part (c) from \cite[Corollary 4.5.1]{AstalaIwaniecMartin2009}. 
(Part (c) is only stated to justify the existence of the integral for continuous
functions.) The analogous properties hold on Sobolev spaces $W^{k,p}$, 
but we shall not need them.
\end{proof}

%
%
%
%
\section{Quasiconformal deformations of the identity map}
\label{sec:QC}

In this section, $z:X\to \C$ denotes a   
holomorphic immersion from an open Riemann surface $X$
(see \cite{GunningNarasimhan1967}). 
We shall call the pair $(X,z)$ a {\em Riemann domain over $\C$}.
Given a $\Cscr^1$ function $f:X\to\C$, 
the derivatives $f_z =\di f/dz$ and $f_{\bar z} = \dibar f/d{\bar z}$
\eqref{eq:derivatives} are well-defined continuous functions on $X$.
We endow $X$ with the smooth structure determined by its Riemann
surface structure and define the H\"older norms on domains 
$\Omega\Subset X$ with respect to a fixed smooth Riemannian metric
on $X$. The following result gives a solution of the Beltrami equation on 
any smoothly bounded relatively compact domain 
for Beltrami coefficients with sufficiently small H\"older norm.
Recall that $\D$ is the unit disc in $\C$. 

%
%
\begin{theorem}\label{th:Beltrami1}
Let $\Omega$ be a smoothly bounded relatively compact domain
in a Riemann domain $(X,z)$. For any $k\in\Z_+$ 
and $0<\alpha<1$ there is a constant $c=c(\Omega,k,\alpha)>0$ 
such that for every 
$\mu\in \Cscr^{(k,\alpha)}(\Omega,\D)$ with $\|\mu\|_{(k,\alpha)}<c$ 
there is function $f=f(\mu)\in \Cscr^{(k+1,\alpha)}(\Omega)$ solving
the Beltrami equation $f_{\bar z} = \mu f_z$, with $f(\mu)$
depending 
analytically on $\mu$ and satisfying $f(0)=z|_\Omega$.
\end{theorem}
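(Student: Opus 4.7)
The plan is to use the classical Ahlfors--Bers ansatz adapted to $(X,z)$ via the operators $P_\Omega$ and $S_\Omega$ from Section~\ref{sec:CB}. I look for the solution in the form
\[
  f = z|_\Omega + P_\Omega(h),
\]
with the unknown $h \in \Cscr^{(k,\alpha)}(\overline\Omega)$ to be determined. Since $z$ is holomorphic on $X$ and $\di_{\bar z} P_\Omega(h)=h$ on $\overline\Omega$ by Theorem~\ref{th:PSproperties}(a), one has $f_{\bar z}=h$; and since the operators are related by $S_\Omega = \di_z P_\Omega$ on $\overline\Omega$ (apparent by differentiating the kernel under the integral in \eqref{eq:POmega}, \eqref{eq:SOmega}, which is justified exactly as in the plane model because $P_\Omega(\phi)=P(\chi E(\phi))|_{\overline\Omega}$ and similarly for $S_\Omega$), one has $f_z = 1 + S_\Omega(h)$. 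The Beltrami equation $f_{\bar z}=\mu f_z$ is therefore equivalent to the fixed-point equation
\[
  (\Id - T_\mu)(h) = \mu, \qquad T_\mu(h) := \mu\cdot S_\Omega(h).
\]

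The next step is to solve this by Neumann series. Since $\Cscr^{(k,\alpha)}(\overline\Omega)$ is a Banach algebra under pointwise multiplication, the operator of multiplication by $\mu$ has norm at most $C_1 \|\mu\|_{k,\alpha}$; combined with the boundedness of $S_\Omega:\Cscr^{(k,\alpha)}(\overline\Omega)\to \Cscr^{(k,\alpha)}(\overline\Omega)$ with some norm $C_2 = C_2(k,\alpha)$ from Theorem~\ref{th:PSproperties}(b), we get $\|T_\mu\|_{\mathrm{op}} \le C_1 C_2 \|\mu\|_{k,\alpha}$. Setting $c := (2C_1 C_2)^{-1}$ ensures $\|T_\mu\|_{\mathrm{op}}\le \tfrac12$ whenever $\|\mu\|_{k,\alpha}<c$, so that $\Id - T_\mu$ is invertible on $\Cscr^{(k,\alpha)}(\overline\Omega)$ and the unique solution is
\[
  h(\mu) = (\Id-T_\mu)^{-1}\mu = \sum_{n=0}^{\infty} T_\mu^n(\mu).
\]
Defining $f(\mu) := z|_\Omega + P_\Omega(h(\mu))$, Theorem~\ref{th:PSproperties}(a) immediately gives $f(\mu)\in\Cscr^{(k+1,\alpha)}(\overline\Omega)$, and by construction $f(\mu)$ satisfies the Beltrami equation on $\Omega$.

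The value $\mu=0$ gives $h(0)=0$ and hence $f(0)=z|_\Omega$. Smooth dependence of $f(\mu)$ on $\mu$ comes for free: the Neumann series expresses $h(\mu)$ as a norm-convergent power series in $\mu$ with continuous multilinear coefficients, so $\mu\mapsto h(\mu)$ is in fact real-analytic from a ball in $\Cscr^{(k,\alpha)}(\overline\Omega)$ into itself, and post-composition with the bounded linear map $P_\Omega$ into $\Cscr^{(k+1,\alpha)}(\overline\Omega)$ preserves this regularity. The only delicate point I foresee is the bookkeeping needed to verify the operator identity $S_\Omega = \di_z P_\Omega$ on $\overline\Omega$ (including the behaviour at $b\Omega$, where the extension operator $E$ of Lemma~\ref{lem:extension} is crucial for avoiding the loss of regularity that would otherwise plague a truncated Beurling transform); once that is in hand, the contraction argument and the estimates already packaged in Theorem~\ref{th:PSproperties} do all the remaining work.
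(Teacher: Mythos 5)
Your proposal reproduces the paper's argument: the same ansatz $f = z|_\Omega + P_\Omega(h)$, the same reduction to $(\Id-\mu S_\Omega)h=\mu$, and the same Neumann-series inversion with the radius of convergence set by the operator norm $\|S_\Omega\|$ from Theorem~\ref{th:PSproperties}. Two points of comparison are worth recording. First, where the paper handles only $k=0$ and remarks that the general $k$ is similar, you run the argument directly on $\Cscr^{(k,\alpha)}(\overline\Omega)$, which is entirely consistent since Theorem~\ref{th:PSproperties} is stated for all $k\in\Z_+$. Second, for smooth dependence on $\mu$, you invoke the general fact that a norm-convergent power series $h(\mu)=\sum_n (\mu S_\Omega)^n\mu$ with bounded multilinear coefficients is real-analytic on its ball of convergence, whence $f(\mu)=z|_\Omega+P_\Omega(h(\mu))$ is real-analytic too; the paper instead establishes continuity via the identity $\Theta(\mu+\nu)=(I-\Theta(\mu)\nu S)^{-1}\Theta(\mu)$, then computes the Fr\'echet differential $D_\mu\Theta(\mu)(\nu)=\Theta(\mu)\nu S\,\Theta(\mu)$ explicitly, and bootstraps to $\Cscr^\infty$ by induction. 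Your route is cleaner but rests on a (standard but not trivial) fact about analytic maps between Banach spaces; the paper's route is more pedestrian but self-contained. Both deliver the claimed smoothness.

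The one item you correctly flagged as delicate --- that $S_\Omega=\di_z P_\Omega$ on $\overline\Omega$ --- is in the paper handled by \emph{definition}: $S_\Omega$ in \eqref{eq:SOmega} is constructed precisely by formally differentiating the kernel of $P_\Omega$ in \eqref{eq:POmega} under the integral sign; the justification (including the principal-value interpretation near the diagonal, where the kernel has a double pole) reduces to the planar Beurling transform via the localization discussion preceding Lemma~\ref{lem:extension}. Your remark on the role of the extension operator $E$ in avoiding boundary loss of regularity is exactly the point of Lemma~\ref{lem:extension}, so your outline is sound.
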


Interpreting $\mu\in \Cscr^{(k,\alpha)}(\Omega,\D)$ 
as a complex structure $J_\mu$ on $\Omega$ 
(see \eqref{eq:matrixofJ}--\eqref{eq:mu}),
with $J_0$ coinciding with the initial complex structure, 
the function $f(\mu):\Omega\to\C$ is $J_\mu$-holomorphic, 
and it is an immersion for $\mu$ close to $0$
since $f(\mu)$ is then close to $f(0)=z|_\Omega$ in
$\Cscr^{(k+1,\alpha)}(\Omega)$.
Thus, $(\Omega,J_\mu,f(\mu))$ is a family of Riemann 
domains over $\C$ depending analytically on $\mu$
in a neighbourhood of $\mu=0$.

%
%
\begin{proof}[Proof of Theorem \ref{th:Beltrami1}] 
The idea is inspired by the proof 
of the corresponding result for $\mu\in L^p(\C)$ $(p>2)$,
due to Ahlfors and Bers \cite[Theorem 4]{AhlforsBers1960}.

Recall that the algebra $\mathrm{Lin}(E)$ of all bounded linear operators 
on a Banach space $E$, with functional composition 
as multiplication and the operator norm, is a unital Banach algebra
(see Conway \cite{Conway1990}). In our case, $E$ will be 
the Banach space $\Cscr^{(k,\alpha)}(\Omega)$.

We look for a solution of the Beltrami equation $f_{\bar z} = \mu f_z$ 
on $\Omega$ in the form
\begin{equation}\label{eq:fmu}
	f=f(\mu)=z|_\Omega +P(\phi),\quad \phi\in \Cscr^{(k,\alpha)}(\Omega).
\end{equation}
Here, 
$P=P_\Omega:  \Cscr^{(k,\alpha)}(\Omega)\to \Cscr^{(k+1,\alpha)}(\Omega)$ 
is the Cauchy--Green operator \eqref{eq:POmega}.
Thus, $\phi=0$ corresponds to $f=z|_\Omega$. 
By Theorem \ref{th:PSproperties} (a), $P$ is a continuous 
linear operator. We have that 
\[
	f_{\bar z}= \di_{\bar z}P(\phi) = \phi,
	\qquad
	f_z = 1 + \di_z P(\phi) = 1+ S(\phi),
\]
where $S=S_\Omega \in \mathrm{Lin}(\Cscr^{(k,\alpha)}(\Omega))$ 
is the Beltrami operator \eqref{eq:SOmega}.
The first identity follows from Theorem \ref{th:PSproperties} (a),
and the second one follows from the definition \eqref{eq:SOmega} of 
$S$. Inserting the above expressions in the Beltrami equation 
$f_{\bar z} = \mu f_z$ gives the following equation for $\phi$:
\begin{equation}\label{eq:main}
	\phi = \mu(S(\phi) +1) = \mu S(\phi) + \mu
	\ \Longleftrightarrow\  (I - \mu S) \phi =\mu,
\end{equation}
where $I$ denotes the identity map on $\Cscr^{(k,\alpha)}(\Omega)$. 
By Theorem \ref{th:PSproperties} (b), $S$ is a bounded linear operator 
on $\Cscr^{(k,\alpha)}(\Omega)$. Hence, for 
$\mu$ small enough we have $\|\mu S\|_{(k,\alpha)}<1$, 
so the operator $I- \mu S$ is invertible with 
\begin{equation}\label{eq:Theta}
	\Theta(\mu) = (I - \mu S)^{-1} =
	\sum_{j=0}^\infty (\mu S)^j 
	\in  \mathrm{Lin}(\Cscr^{(k,\alpha)}(\Omega)). 
\end{equation}
The equation \eqref{eq:main} then has a unique solution 
$\phi=\Theta(\mu) \mu =\sum_{j=0}^\infty (\mu S)^j \mu$.
Inserting into \eqref{eq:fmu} gives the following solution to the 
Beltrami equation $f_{\bar z} = \mu f_z$ on $\Omega$:
\begin{equation}\label{eq:fmusolution}
	f(\mu) = z|_\Omega +  P (\Theta(\mu) \mu) 
	= z|_\Omega + P((I - \mu S)^{-1} \mu)
	 \in \Cscr^{(k+1,\alpha)}(\Omega).
\end{equation}
By standard results 
(see Mujica \cite[29.3 Theorem]{Mujica1986}),  
$\Theta(\mu)$ and hence $f(\mu)$ \eqref{eq:fmusolution} 
depend analytically on $\mu$, and the other properties of $f$ 
are obvious from the construction.
\end{proof}

%
%
\begin{remark}\label{rem:parameters}
The fact that the map $\mu\to f(\mu)$ is analytic 
implies that if $\mu(t)$ is a function of class $\Cscr^l$ 
on an open set $t\in U\subset \R^m$ (or $t\in U\subset\C^n$), 
with $\|\mu(t)\|_{(k,\alpha)} < c$ for all $t\in U$, then the map
$U\ni t\mapsto f(\mu(t))\in \Cscr^{(k+1,\alpha)}(\Omega)$ 
is also of class $\Cscr^l(U)$. This holds for any $l\in \{0,1,\ldots,\infty\}$
as well as for real analytic or holomorphic dependence on $t$.
The analogous statement was proved by Ahlfors and Bers
\cite[Theorem 2]{AhlforsBers1960} for solutions of the Beltrami
equation with $\mu\in L^p(\C)$ for $p>2$. 
\end{remark}

Given an open Riemann surface $(X,J)$ 
and a domain $\Omega\subset X$, a family of smooth diffeomorphisms 
$\Phi_b:\Omega \to \Phi_b(\Omega) \subset X$ $(b\in B)$
induces a family of complex structures $J_b=\Phi_b^* J$ on $\Omega$.
The following result shows that the converse holds on any 
smoothly bounded relatively compact 
domain $\Omega\Subset X$ for sufficiently
small variations of the complex structure. 

%
%
\begin{theorem} \label{th:Beltrami2}	
Assume that $(X,z)$ is a Riemann domain over $\C$,  
$\Omega$ is a relatively compact smoothly bounded domain in $X$,
and $a_1,\ldots,a_m \in\Omega$ are distinct points.
For any $k\in\Z_+$ and $0<\alpha<1$ there is a constant 
$c=c(k,\alpha)>0$ such that for every function 
$\mu\in \Cscr^{(k,\alpha)}(\Omega,\D)$ with $\|\mu\|_{(k,\alpha)}<c$ 
there is a $\mu$-conformal diffeomorphism 
$\Phi_\mu:\Omega\to \Phi_\mu(\Omega)\subset X$ in 
$\Cscr^{(k+1,\alpha)}(\Omega,X)$, depending analytically on $\mu$, 
such that $\Phi_0=\Id_\Omega$ and $\Phi_\mu(a_j)=a_j$ for all such 
$\mu$ and $j=1,\ldots,m$.
\end{theorem}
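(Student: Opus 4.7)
The plan is to combine Theorem~\ref{th:Beltrami1} with a local inversion of the immersion $z:X\to\C$ and a holomorphic correction to enforce the interpolation conditions $\Phi_\mu(a_j)=a_j$. First, I apply Theorem~\ref{th:Beltrami1} on $\Omega$ to obtain, for every $\mu\in\Cscr^{(k,\alpha)}(\Omega,\D)$ with $\|\mu\|_{k,\alpha}$ below the threshold $c=c(k,\alpha)$, a $\mu$-conformal function $f(\mu)\in\Cscr^{(k+1,\alpha)}(\Omega)$ with $f(0)=z|_\Omega$, depending smoothly on $\mu$ in the Banach norm.

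Next I lift $f(\mu)$ back to $X$ through $z$. Since $z$ is a holomorphic immersion and $\overline\Omega$ is compact, the implicit function theorem applied to $(p,q)\mapsto z(q)-z(p)$ near the diagonal $\Delta_X\subset X\times X$ produces an $r>0$ and a map $q(p,w)$, holomorphic in $w$ and smooth in $p$, defined for $p\in\overline\Omega$ and $|w|<r$, with $q(p,0)=p$ and $z(q(p,w))=z(p)+w$. For $\mu$ small enough that $\sup_{\overline\Omega}|f(\mu)-z|<r$, set
\[
    \Phi^0_\mu(p):=q\bigl(p,\,f(\mu)(p)-z(p)\bigr),\qquad p\in\Omega.
\]
Then $z\circ\Phi^0_\mu=f(\mu)$. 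Since $z$ is a local biholomorphism and $f(\mu)$ is $\mu$-conformal, $\Phi^0_\mu:\Omega\to X$ is $\mu$-conformal; it lies in $\Cscr^{(k+1,\alpha)}(\Omega)$, depends smoothly on $\mu$, and reduces to the identity at $\mu=0$. Being a $\Cscr^1$-small perturbation of $\Id_\Omega$ for small $\mu$, it is a diffeomorphism onto its image.

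To enforce the interpolation conditions, I post-compose $\Phi^0_\mu$ with a small holomorphic self-map of a neighbourhood of $\overline\Omega$. Since $X$ is an open Riemann surface, the Gunning--Narasimhan theorem \cite{GunningNarasimhan1967} gives a nowhere vanishing holomorphic $1$-form $dz$ trivializing $K_X=T^*X$; dually, there is a nowhere vanishing holomorphic vector field $V$ on $X$. The finite set $\{a_1,\ldots,a_m\}$ is $\Oscr(X)$-convex, so the Behnke--Stein--Florack interpolation theorem yields $f_1,\ldots,f_m\in\Oscr(X)$ with $f_i(a_j)=\delta_{ij}$; set $W_i:=f_iV$, so $W_i(a_j)=\delta_{ij}V(a_j)$. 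For $t=(t_1,\ldots,t_m)\in\C^m$ near $0$, let $\Psi_t$ denote the time-$1$ flow of the holomorphic vector field $\sum_it_iW_i$, a biholomorphism of a neighbourhood of $\overline\Omega$ onto its image, depending holomorphically on $(t,x)$. The evaluation $T(t):=(\Psi_t(a_j))_{j=1}^m\in X^m$ satisfies $T(0)=(a_j)_j$ and $dT_0(\partial_{t_i})=(W_i(a_j))_j=(\delta_{ij}V(a_j))_j$, so $dT_0$ is a linear isomorphism in the trivialization provided by $V$. Writing $b_j(\mu):=\Phi^0_\mu(a_j)$ (smoothly depending on $\mu$, with $b_j(0)=a_j$), the inverse function theorem produces a unique $t(\mu)\in\C^m$ near $0$, smoothly dependent on $\mu$, such that $\Psi_{t(\mu)}(a_j)=b_j(\mu)$. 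Setting $h_\mu:=\Psi_{t(\mu)}^{-1}$ and $\Phi_\mu:=h_\mu\circ\Phi^0_\mu|_\Omega$ gives the desired map: $\Phi_\mu$ is $\mu$-conformal (composition of $\mu$-conformal with holomorphic), $\Phi_\mu(a_j)=h_\mu(b_j(\mu))=a_j$, $\Phi_\mu\in\Cscr^{(k+1,\alpha)}$ with smooth dependence on $\mu$, and $\Phi_0=\Id_\Omega$; shrinking the admissible norm of $\mu$ further if needed makes $\Phi_\mu$ a diffeomorphism onto its image.

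The substantive analytic step is the first one, which is precisely the content of Theorem~\ref{th:Beltrami1} and supplies the smooth Banach-valued dependence of $f(\mu)$ on $\mu$. The remaining ingredients---the lifting through the immersion $z$ and the holomorphic correction via flows of interpolating vector fields---are implicit function theorem arguments whose hypotheses are furnished by Gunning--Narasimhan and by Runge--Behnke--Stein approximation on the open Riemann surface $X$; these transfer the Banach smoothness of $\mu\mapsto f(\mu)$ to $\mu\mapsto\Phi_\mu$.
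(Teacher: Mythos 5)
Your proposal is correct and follows essentially the same route as the paper: solve the Beltrami equation via Theorem~\ref{th:Beltrami1}, lift $f(\mu)$ through the immersion $z$ to get a $\mu$-conformal diffeomorphism close to the identity, and fix the points $a_j$ by an inverse-function-theorem correction using flows of holomorphic vector fields that are nonzero at one $a_j$ and vanish at the others. The only cosmetic differences are that you build those vector fields explicitly as $f_iV$ (via a dual vector field to $dz$ and Florack interpolation) and flow a single linear combination $\sum_i t_iW_i$, whereas the paper simply chooses such fields and composes their individual flows $\psi_{1,t_1}\circ\cdots\circ\psi_{m,t_m}$.
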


\begin{proof}
If $c>0$ is small enough then for every 
$\mu\in \Cscr^{(k,\alpha)}(\Omega)$ with $\|\mu\|_{(k,\alpha)}<c$
the function $f(\mu)\in \Cscr^{(k,\alpha)}(\Omega)$, furnished by 
Theorem \ref{th:Beltrami1}, is so close to the holomorphic immersion 
$f(0)=z|_\Omega:\Omega\to\C$ in $\Cscr^{(k+1,\alpha)}(\Omega)$ 
that it is an immersion. 
If $f(\mu)$ is sufficiently close to $f(0)=z|_\Omega$, 
we can lift it with respect to the holomorphic 
immersion $z:X\to\C$ to a unique diffeomorphism 
$\Phi_\mu: \Omega\to \Phi_\mu(\Omega)\subset X$ in  
$\Cscr^{(k+1,\alpha)}(\Omega,X)$, close to $\Phi_0=\Id_\Omega$, 
such that 
\begin{equation}\label{eq:lifting}
	\text{$z\circ \Phi_\mu = f(\mu)$ \ \ holds on $\Omega$.}
\end{equation}
To see this, pick $r>0$ such that 
for any $q\in \overline \Omega$ the immersion $z:X\to\C$ is injective on
the disc $U_r(q)\subset X$ of radius $r$ around $q$ 
in the metric $|dz|^2$. 
If $f(\mu)(q)\in \C$ is close enough to $z(q)\in\C$
(which holds if $c>0$ is small enough),
there is a unique point $p\in U_r(q)$ such that 
$z(p)=f(\mu)(q)$, and we set $\Phi_\mu(q)=p$. 
Thus, $\Phi_\mu(q)$ is the unique closest point to $q$ among 
the points in the closed discrete set $z^{-1}(f(\mu)(q)) \subset X$, 
so $\Phi_\mu$ is well-defined on $\overline\Omega$. This implies 
$
	z(\Phi_\mu(q))=z(p)=f(\mu)(q), 
$
so \eqref{eq:lifting} holds. 
Since $\Phi_\mu$ is locally obtained by postcomposing the 
immersion $f(\mu):\Omega\to\C$ with a local inverse of 
the $J$-holomorphic immersion $z:X\to\C$, $\Phi_\mu$ is an immersion, 
its Beltrami coefficient is the same as that of $f(\mu)$,
which is $\mu$, and the regularity properties remain unchanged. 
It is easily seen that $\Phi_\mu$ is injective if $f(\mu)$ 
is close enough to $z$, which holds if $c>0$ is small enough. 

This shows that $\Phi_\mu:\Omega\to \Phi_\mu(\Omega)$
is a family of $\mu$-conformal diffeomorphisms 
in $\Cscr^{(k+1,\alpha)}(\Omega,X)$ depending 
analytically on $\mu$. The interpolation conditions 
$\Phi_\mu(a_j)=a_j$ are achieved as follows.
For every $j=1,\ldots,m$ we choose a holomorphic vector field 
$v_j$ on $X$ which is nonzero at the point $a_j$ and
it vanishes at the points $a_i$ for $i\in \{1,\ldots,m\}\setminus \{j\}$.
Let $t\to \psi_{j,t}$ denote the local flow of $v_j$ for complex time $t$. 
If $c>0$ is small enough, there is an open relatively compact domain 
$\Omega'\Subset X$ such that $\Phi_\mu(\Omega)\subset \Omega'$ 
holds for all $\mu\in \Cscr^{(k,\alpha)}(\Omega)$ with 
$\|\mu\|_{(k,\alpha)}<c$. Choose a bigger domain $\Omega''\Subset X$
such that $\overline{\Omega'}\subset\Omega''$.
Since $\overline {\Omega''}$ is compact,
there is a $t_0>0$ such that the holomorphic map 
$
	\Psi_t := \psi_{1,t_1}\circ\cdots\circ \psi_{m,t_m}: \Omega''\to X
$
is well-defined for all $t=(t_1,\ldots,t_m)\in \C^m$ in the polydisc
$\Delta^m_{t_0}=\{|t_j|<t_0,\ j=1,\ldots,m\}$. 
For every $t\in \Delta^m_{t_0}$ the map $\Psi_t$, 
being a composition of flows of holomorphic
vector fields, is biholomorphic onto its image 
$\Psi_t(\Omega'')\subset X$. The choice of the vector fields
$v_j$ ensures, by the inverse function theorem, 
that for every $m$-tuple of points 
$a'=\{a'_1,\ldots,a'_m\}\subset \Omega$ such that $a'_j$ is close enough 
to $a_j$ for $j=1,\ldots,m$ there is a unique $t=t(a')\in \Delta^m_{t_0}$
close to the origin such that $\Psi_t(a_j)=a'_j$ for $j=1,\ldots,m$,
and the map $a'\mapsto t(a')$ is holomorphic. 
Let $a'(\mu)=(\Phi_{\mu}(a_1),\ldots, \Phi_{\mu}(a_m))$.
Then, for all $\mu\in \Cscr^{(k,\alpha)}(\Omega)$ 
with $\|\mu\|_{(k,\alpha)}<c$
for a small enough $c>0$, the injective holomorphic map 
$\Psi^{-1}_{t(a'(\mu))}:\Omega'\to X$ 
sends the point $\Phi_\mu(a_j)$ back to $a_j$ for $j=1,\ldots,m$.
Replacing $\Phi_\mu$ by $\Psi^{-1}_{t(a'(\mu))} \circ \Phi_\mu$ 
completes the proof.
\end{proof} 

%
%
\begin{remark}\label{rem:homotopic}
Note that every diffeomorphism 
$\Phi_\mu:\Omega\to \Phi_\mu(\Omega)\subset X$ 
in Theorem \ref{th:Beltrami2} is homotopic to the identity 
map on $\Omega$ by the homotopy 
$[0,1] \ni t\mapsto \Phi_{t\mu}$. In particular, if
the domain $\Omega$ is Runge in $X$ then so is 
$\Phi_\mu(\Omega)$. In fact, a domain $\Omega$ in a 
Riemann surface $X$ is Runge if and only
if the inclusion-induced homomorphism $H_1(\Omega,\Z)\to H_1(X,\Z)$
of the first homology groups is injective, and this
condition is clearly invariant under homotopies.
\end{remark}

%
%
\begin{corollary}\label{cor:Hamilton}
Let $B$, $X$, and $\mathscr{J}=\{J_b\}_{b\in B}$ be as in Theorem \ref{th:Runge}, with $\mathscr{J}$ of class $\Cscr^{l,(k,\alpha)}$
$(k\in \Z_+,\ l\le k+1,\ 0<\alpha<1$).
Given $b_0\in B$ and a smoothly bounded 
domain $\Omega\Subset X$, there are a neighbourhood 
$B_0\subset B$ of $b_0$ and a map 
$\Phi:B_0\times \Omega\to B_0\times X$ of the form 
\begin{equation}\label{eq:Hamilton}
	\Phi(b,x)=(b,\Phi_b(x)),\quad b\in B_0,\ x\in \Omega 
\end{equation}
such that $\Phi_{b_0}$
is the identity on $\Omega$ and $\Phi_b:\Omega\to\Phi_b(\Omega)$
is a $(J_b,J_{b_0})$-biholomorphic map in 
$\Cscr^{(k+1,\alpha)}(\Omega,X)$ which is of class $\Cscr^{l}$
with respect to $b\in B_0$.
If $A$ is a finite subset of $\Omega$, we can choose $\Phi$ such that
$\Phi_b(a)=a$ holds for all $a\in A$ and $b\in B_0$.
\end{corollary}

\begin{proof}
Choose a $J_{b_0}$-holomorphic immersion $z:X\to\C$. 
Theorem \ref{th:isothermal} and Proposition \ref{prop:pullback}
imply that the family $\Jscr$ is also of class $\Cscr^{l,(k,\alpha)}$ 
in the smooth structure on $X$ determined by $J_{b_0}$.
Hence there is a family of Beltrami multipliers 
$\mu_b:X\to \D$ $(b\in B)$ of class $\Cscr^{l,(k,\alpha)}$, 
with $\mu_{b_0}=0$, such that $\mu_b$ represents $J_b$ 
(see \eqref{eq:matrixofJ} and \eqref{eq:mu}). 
Pick $c>0$ such that Theorem \ref{th:Beltrami2} applies
to all $\mu\in \Cscr^{(k,\alpha)}(\Omega)$ with $\|\mu\|_{(k,\alpha)}<c$.
By continuity of the map $B\ni b\mapsto \mu_b$ 
there is a neighbourhood $B_0\subset B$ 
of $b_0$ such that $\|\mu_b|_\Omega\|_{(k,\alpha)} <c$ for all $b\in B_0$.
Let $\Phi_b:\Omega\to \Phi_b(\Omega)\subset X$ for $b\in B_0$ 
be a family of $\mu_b$-conformal diffeomorphisms  
furnished by Theorem \ref{th:Beltrami2}. 
The map $\Phi$ in \eqref{eq:Hamilton} then satisfies the corollary.
\end{proof}

%
%

%
%
%
%
\section{Runge and Mergelyan theorems on families of open Riemann surfaces}
\label{sec:Runge}

In this section we prove Theorems \ref{th:Runge} and \ref{th:Mergelyan}.
We begin with the former.

\begin{proof}[Proof of Theorem \ref{th:Runge}]
We first consider the basic case $l=0$ and arbitrary $k\in \Z_+$
and $0<\alpha<1$.
By slightly increasing $K$ and $L$ and adding to $K$ small pairwise 
disjoint discs around the finitely many points in $A\cap (L\setminus K)$,
we may assume that $A\cap L$ is contained in the interior of $K$. 
Given a continuous function $\epsilon:B\to (0,+\infty)$, 
we shall prove that for any compact Runge set $L\subset X$ 
containing $K$ in its interior there exist an open set 
$\Omega\subset X$ containing $L$ and a continuous function 
$F\in\Cscr(B\times \Omega)$ satisfying 
the following conditions for every $b\in B$.
\begin{enumerate}[\rm (a)]
\item The function $F_b=F(b,\cdotp):\Omega \to\C$ is $J_b$-holomorphic. 
\item $\sup_{x\in K}|F_b(x) - f_b(x)|<\epsilon(b)$.
\item $F_b-f_b$ vanishes in the points of the finite set 
$A'=A\cap L =\{a_1,\ldots,a_m\}$.
\end{enumerate}
Approximation in the fine $\Cscr^{(k+1,\alpha)}$-topology will follow 
from condition (b) in view of the Cauchy estimates.
A function $B\times X\to\C$ satisfying Theorem \ref{th:Runge} 
for $l=0$ is then obtained by induction with respect 
to an exhaustion of $X$ by an increasing family of compact Runge sets.


Given a point $b_0\in B$, it suffices to find an open 
neighbourhood $B_0\subset B$ of $b_0$ and a function 
$F:B_0\times \Omega\to \C$ satisfying conditions (a)--(c) 
for all $b\in B_0$. Since $B$ is Hausdorff and paracompact,
this will give a locally finite cover of $B$ 
by open sets $B_j$ and functions $F_j:B_j\times \Omega\to \C$ 
satisfying conditions (a)--(c) for $b\in B_j$. 
Choose a partition of unity $1=\sum_j\chi_j$
with $\supp\,\chi_j\subset B_j$ for every $j$. 
The function $F:B\times \Omega\to\C$, defined by 
\begin{equation}\label{eq:partition}
	F(b,x)=\sum_j \chi_j(b) F_j(b,x)\quad 
	\text{for $b\in B$ and $x\in \Omega$}, 
\end{equation} 
then clearly satisfies conditions (a)--(c). 

With these reductions in mind, we consider the 
problem near a parameter value $b_0\in B$.
We endow $X$ with the Riemann surface structure determined by $J_{b_0}$. 
By Theorem \ref{th:isothermal}, the smooth structure on $X$,
induced by $J_{b_0}$, is $\Cscr^{(k+1,\alpha)}$-compatible with 
the given smooth structure on $X$. 
Choose a relatively compact, smoothly bounded 
domain $\Omega\Subset X$ with $L\subset \Omega$. 
By Corollary \ref{cor:Hamilton} there are a neighbourhood 
$B'_0\subset B$ of $b_0$ and a continuous family 
of $(J_b,J_{b_0})$-biholomorphic maps 
$\Phi_b:\Omega\to \Phi_b(\Omega)\subset X$ $(b\in B'_0)$ 
in $\Cscr^{(k+1,\alpha)}(\Omega, X)$ 
such that $\Phi_{b_0}=\Id_\Omega$ and
$\Phi_b(a)=a$ holds for all $a\in A$ and $b\in B'_0$.
Choose a compact Runge set $K'\subset X$ containing $K$ in its 
interior such that $f_{b_0}$ is holomorphic 
on a neighbourhood of $K'$. Pick a 
neighbourhood $B_0\subset B'_0$ of $b_0$ such that 
$\Phi_b(K)\subset \mathring K'$ holds for all $b\in B_0$.
By Runge theorem in open Riemann surfaces \cite{BehnkeStein1949},
we can approximate $f_{b_0}$ uniformly on $K'$
by a $J_{b_0}$-holomorphic function $F_{b_0}:X \to\C$.
The function $F_b=F_{b_0}\circ \Phi_b:\Omega\to \C$
$(b\in B_0)$ is then $J_b$-holomorphic,
it depends continuously on $b\in B_0$, and $F_b$ is as close
as desired to $f_b$ uniformly on $K$ if $b$ is close enough 
to $b_0$ and $F_{b_0}$ is close enough to $f_{b_0}$ on $K'$. 
Indeed, for $x\in K$ we have
\begin{eqnarray*}
	|F_b(x)-f_b(x)| &\le& 
	|F_{b_0}\circ \Phi_b(x)-f_{b_0}\circ \Phi_b(x)| \\
	&& \ \ + \  |f_{b_0}\circ \Phi_b(x)-f_{b_0}(x)|
	+ |f_{b_0}(x)-f_b(x)|,
\end{eqnarray*}
and each term on the right hand side is 
as small as desired if $b$ is close enough to $b_0$
and $F_{b_0}$ is close enough to $f_{b_0}$ on $K'$. 
Hence, shrinking the neighbourhood $B_0$ around $b_0$ if necessary, 
the family $\{F_b\}_{b\in B_0}$ satisfies conditions (a) and (b).
Furthermore, as the family of $J_{b_0}$-holomorphic 
functions $f_b\circ \Phi_b^{-1}$ $(b\in B_0)$ is uniformly
close to $F_{b_0}$ on the family 
of compact sets $\Phi_b^{-1}(K')$, approximation of $f_b$ by $F_b$
on $K$ in the $\Cscr^{(k+1,\alpha)}$-topology follows from uniform
approximation on a bigger compact Runge set, 
containing $K$ in its interior, in view of the Cauchy estimates. 

This proves Theorem \ref{th:Runge} in the case $l=0$, except
for the interpolation condition (c) which will be dealt with later.
The same proof applies to variable families of compact Runge
sets in a family of open Riemann surfaces as in the following
definition. For later purposes (see in particular in Lemma \ref{lem:Ck}), 
we introduce this notion in the bigger generality of 
families of complex manifolds.

%
%
\begin{definition}\label{def:Runge}
Assume that $B$ is a topological space, $X$ is a smooth manifold
of real dimension $2n\ge 2$, and $\Jscr=\{J_b\}_{b\in B}$ 
is a family of integrable complex structures on $X$.
A closed subset $K$ of $B\times X$ is proper over $B$ 
if the following two conditions hold. 
\begin{enumerate}
\item For every $b\in B$ the fibre $K_b=\{x\in X:(b,x)\in K\}$ is compact.
($K_b$ may be empty.) 
\item For every $b_0\in B$ and open set $U\subset X$ containing
$K_{b_0}$ there is a neighbourhood $B_0\subset B$ of $b_0$ such that
$K_b\subset U$ for all $b\in B_0$.
\end{enumerate}
The set $K$ is Runge in $B\times X$ if it is proper over $B$ 
and the fibre $K_b$ is holomorphically convex in $X$ with respect to
the complex structure $J_b$ for every $b\in B$.
\end{definition}

Condition (2) means that the compact fibres $K_b\subset X$ of $K$ 
are upper semicontinuous with respect to $b\in B$.  
It is easily seen that if $B$ is Hausdorff and locally compact 
then a closed subset $K\subset B\times X$ is proper over $B$
if and only if the restriction of the projection $\pi:B\times X\to B$ to $K$
is a proper map $\pi|_K:K\to B$ (that is,
the inverse image of any compact set in $B$ is compact). 

Our proof of Theorem \ref{th:Runge} in the case $l=0$
gives the following more general result concerning approximation 
on Runge sets in families of open Riemann surfaces.
For the interpolation statement on $Q\times X$, see Remark \ref{rem:Q}. 

%
%
\begin{corollary}\label{cor:Runge}
Assume that $B$ is a paracompact Hausdorff space, $X$
is a smooth open orientable surface, $\Jscr=\{J_b\}_{b\in B}$ 
is a family of complex structures on $X$ of class $\Cscr^{0,(k,\alpha)}$
for some $k\ge 0$ and $0<\alpha<1$, $K\subset B\times X$
is a closed Runge subset (see Definition \ref{def:Runge}),
and $Q$ is a closed subset of $B$.
Given an open subset $U\subset B\times X$ containing $K$
and a continuous $\Jscr$-holomorphic function $f:U \cup (Q\times X)\to\C$, 
we can approximate $f$ in the fine $\Cscr^{0,(k+1,\alpha)}$ topology on $K$
by $\Jscr$-holomorphic functions $F:B\times X\to \C$ of class 
$\Cscr^{0,(k+1,\alpha)}$ satisfying $F=f$ on $Q\times X$.
\end{corollary}

Next, we consider approximation in the $\Cscr^{l,(k+1,\alpha)}$-topology 
for any pair of integers $0\le l\le k+1$.
As before, locally in the parameter we can use Corollary 
\ref{cor:Hamilton} to reduce the approximation problem 
for a variable family of complex structures to the case
of a moving family of compact Runge sets in a fixed complex
structure. With future applications in mind, we consider a 
more general situation for a family of compact holomorphically convex 
sets in a Stein manifold $X$ of arbitrary dimension. 

%
%
\begin{lemma} \label{lem:Ck}
Assume that $B$ is a paracompact Hausdorff space if $l=0$ 
and a manifold of class $\Cscr^l$ if $l>0$, $X$ is a Stein manifold, 
$K$ is a Runge subset of $B\times X$ (see Definition \ref{def:Runge}), 
$U\subset B\times X$ is an open set containing $K$, 
and $f:U\to\C$ is a function of class $\Cscr^{l,0}(U)$ 
such that for every $b\in B$ the function 
$f_b=f(b,\cdotp):U_b=\{x\in X:(b,x)\in U\} \to \C$ is holomorphic. 
Then, $f\in \Cscr^{l,\infty}(U)$ and for any $s\in \Z_+$, 
$f$ can be approximated in the fine $\Cscr^{l,s}$ 
topology on $K$ by $\Cscr^{l,\infty}$ functions 
$F:B\times X\to \C$ such that $F_b=F(b,\cdotp)\in \Oscr(X)$ 
for every $b\in B$.
If $B$ is a topologically closed $\Cscr^l$ submanifold 
of $\R^n\subset\C^n$ (possibly with boundary), 
or a closed subset of $\R^n$ when $l=0$,
then $f$ can be approximated in the fine $\Cscr^{l,s}$ topology 
on $K$ by holomorphic functions $F:\C^n\times X\to\C$.
\end{lemma}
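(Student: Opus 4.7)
The plan has four steps, each of which I sketch below. \textbf{Step 1 (upgrading regularity in $x$).} For $(b_0,x_0)\in U$ I pick a closed polydisc $P$ around $x_0$ in a local holomorphic chart on $X$ such that some $B_0\times P$ lies in $U$. For any multi-index $\beta$ in the $x$-variables, Cauchy's integral formula expresses $\partial_x^\beta f(b,x)$ for $x$ in a smaller polydisc as an integral of $f(b,\cdotp)$ against a smooth kernel on $\partial P$. Differentiation under the integral in $b$ is justified by $f\in\Cscr^{l,0}(U)$, so $\partial_b^\alpha\partial_x^\beta f$ is continuous for $|\alpha|\le l$ and all $\beta$, proving $f\in\Cscr^{l,\infty}(U)$ and that each $\partial_b^\alpha f$ is still fibrewise holomorphic.

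\textbf{Step 2 (local parametric Oka--Weil).} Fix $b_0\in B$. Since $K_{b_0}$ is $\Oscr(X)$-convex and contained in the open set $U_{b_0}$, I choose a compact $\Oscr(X)$-convex set $L\subset U_{b_0}$ with $K_{b_0}\subset\mathring L$; properness of $\pi|_K$ together with openness of $U$ then supply a neighbourhood $B_0$ of $b_0$ for which $K_b\subset\mathring L$ and $B_0\times L\subset U$ for every $b\in B_0$. Step 1 gives $f\in\Cscr^{l,\infty}$ on a neighbourhood of $B_0\times L$, and the parametric Oka--Weil theorem \cite[Theorem 2.8.4]{Forstneric2017E} then produces $G\in\Cscr^{l,\infty}(B_0\times X)$, fibrewise holomorphic on $X$, approximating $f$ in the fine $\Cscr^{l,s}$ topology on $B_0\times L$ to any prescribed precision.

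\textbf{Step 3 (globalization via partition of unity).} Paracompactness of $B$ furnishes a locally finite cover $\{B_j\}$ by neighbourhoods as in Step 2 and a subordinate $\Cscr^l$ partition of unity $\{\chi_j\}$. Choose local approximants $G_j$ with errors controlled by a positive function on $B$ refining the target tolerance, and set $F=\sum_j\chi_j G_j$ in the style of \eqref{eq:partition}. Then $F\in\Cscr^{l,\infty}(B\times X)$, and for each fixed $b$ the fibre $F_b=\sum_j\chi_j(b)G_{j,b}$ is a convex combination of holomorphic functions in $x$, hence holomorphic. Standard bounds on derivatives of the $\chi_j$ combined with the triangle inequality deliver the required fine $\Cscr^{l,s}$ approximation on $K$.

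\textbf{Step 4 (the $\R^n$-ambient case).} When $B$ is a closed $\Cscr^l$-submanifold of $\R^n$ (or merely a closed subset when $l=0$), I apply Whitney's extension theorem in the $b$ variable, acting as a continuous linear operator on the restrictions of the $F$ from Step 3, to obtain $\tilde F\in\Cscr^{l,\infty}(\R^n\times X)$ which agrees with $F$ on $B\times X$ and is still fibrewise holomorphic in $x$. The submanifold $\R^n\times X$ is totally real in the Stein manifold $\C^n\times X$, so extending $\tilde F$ by Taylor polynomials of degree $l$ in the imaginary $\C^n$-directions kills the Cauchy--Riemann defect on $\R^n\times X$ to order $l$; a $\dibar$-correction on a Stein tubular neighbourhood of $\R^n\times X$ in $\C^n\times X$ supplies an almost-holomorphic extension, and a final Oka--Weil patching over a compact exhaustion by $\Oscr(\C^n\times X)$-convex sets yields a holomorphic function on $\C^n\times X$ approximating $F$ in the fine $\Cscr^{l,s}$ topology on $K$. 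The main obstacle is exactly this last step: coordinating the Taylor--$\dibar$ correction with a globally controlled Oka--Weil patching scheme so that fine, rather than merely locally uniform, $\Cscr^{l,s}$ approximation is preserved across the noncompact parameter set $B$.
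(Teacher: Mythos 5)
Your Steps 1 and 3 match the paper, but there are two genuine gaps. The first is in Step 2: \cite[Theorem 2.8.4]{Forstneric2017E} is a parametric Oka--Weil theorem for \emph{continuous} families over a compact Hausdorff parameter space with \emph{uniform} approximation; it does not give $\Cscr^l$ dependence on $b$, let alone approximation of the $b$-derivatives in the $\Cscr^{l,s}$ sense. Uniform closeness plus Cauchy estimates upgrades the approximation in the $x$-derivatives on a slightly smaller set, but there is no Cauchy estimate in the real parameter $b$, so closeness of $\partial_b^\alpha F$ to $\partial_b^\alpha f$ must be built into the construction rather than deduced afterwards. The paper does this by representing $f(b,\cdotp)$ on a strongly pseudoconvex $D$ by a Henkin--Ramirez (or Cauchy) kernel and approximating the boundary integral by Riemann sums, producing local approximants of the split form $\sum_i f(b,\zeta_i)\,g_i(x)$ with $g_i$ holomorphic (then entire, by Oka--Weil in $x$ alone); since every $b$-derivative $L$ of order $\le l$ satisfies $Lf(b,x)=\int_{bD}Lf(b,\zeta)\,\omega(x,\zeta)$, adding sample points makes the same sums approximate the $b$-derivatives too. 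This is exactly the content your citation is being asked to carry.

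The second, more serious gap is Step 4, which you yourself flag as unresolved; but this is the second assertion of the lemma, not an optional refinement. Note also that $\R^n\times X$ is \emph{not} totally real in $\C^n\times X$ when $\dim X>0$ (it contains the complex fibres $\{b\}\times X$), so the almost-holomorphic-extension/$\dibar$ scheme does not apply as framed, and controlling a $\dibar$-correction in the fine $\Cscr^{l,s}$ topology over a noncompact parameter set is precisely the hard point you leave open. The split form of the local approximants is what dissolves this difficulty in the paper: the global approximant is $F=\sum_{j,i}\chi_j h_{j,i}\, g_{j,i}$ with each $\chi_j h_{j,i}$ a compactly supported $\Cscr^l$ function on $B\subset\R^n$ and each $g_{j,i}\in\Oscr(X)$, so one only has to approximate the scalar functions $\chi_j h_{j,i}$ in the fine $\Cscr^l(B)$ topology by entire functions on $\C^n$ (Range--Siu for a closed $\Cscr^l$ submanifold, or Tietze plus Carleman when $l=0$); the locally finite sum $\sum_{j,i}\tilde h_{j,i}\, g_{j,i}$ is then holomorphic on $\C^n\times X$ and fine-$\Cscr^{l,s}$ close to $f$ on $K$. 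Your Whitney-extension step would also need an argument that fibrewise holomorphy in $x$ survives the extension in $b$, but the essential missing idea is to arrange local approximants whose $b$- and $x$-dependence separate, which makes both the $\Cscr^l$ control in $b$ and the final complexification in $b$ essentially trivial.
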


%
%
\begin{remark}\label{rem:HC}
If $B$ is a subset of $\R^n$ then a compact set 
$K\subset B\times X$ with $\Oscr(X)$-convex fibres $K_b$ is 
$\Oscr(\C^n\times X)$-convex
(see Remark 1.3 and Proposition 1.4 in \cite{ForstnericWold2010PAMS}).
\end{remark}

%
%
\begin{proof}[Proof of Lemma \ref{lem:Ck}]
The assumptions on the function $f$ in the lemma clearly imply 
that it is of class $\Cscr^{l,\infty}(U)$, so we may talk
of $\Cscr^{l,s}$ approximation for any $s\in \Z_+$.
Fix a point $b_0\in B$. Since $K$ is Runge in $B\times X$,
there are an open neighbourhood $B_0\subset B$ of $b_0$ and 
neighbourhoods $K'\subset U' \subset X$ of $K_{b_0}$, where 
$K'$ is a compact $\Oscr(X)$-convex set and $U'$ is an open set, such that 
\begin{equation}\label{eq:inclusions0}
	K\cap (B_0\times X)\subset B_0\times \mathring K' 
	\subset B_0\times U' \subset U\cap (B_0\times X).
\end{equation} 
Choose a smoothly bounded strongly pseudoconvex domain 
$D$ in $X$ with $K'\subset D\Subset U'$.
On $D$, there is a Henkin--Ramirez type kernel
$\omega(x,\zeta)$, which is holomorphic in $x\in D$
for every $\zeta\in bD$, such that every 
$h\in \Oscr(\overline D)$ can be represented on $D$ by 
the integral $h(x)=\int_{\zeta\in bD} h(\zeta)\omega(x,\zeta)$, $x\in D$.
(See Henkin and Leiterer \cite{HenkinLeiterer1984}  
or Lieb and Michel \cite{LiebMichel2002}.  
When $X$ is an open Riemann surface, we can use
a Cauchy kernel \eqref{eq:Cauchykernel} on $X$.)
Applying this to the function $f$ in the lemma, we thus have 
\begin{equation}\label{eq:integral}
	f(b,x)=\int_{\zeta\in bD} f(b,\zeta)\omega(x,\zeta)
	\quad \text{for all $x\in D$ and $b\in B_0$}.
\end{equation}
Fix $\epsilon>0$. Approximating the integral for $b=b_0$ by Riemann sums 
and shrinking $B_0$ around $b_0$ if necessary gives
a finite set of points $\zeta_i\in bD$ and functions 
$g_i\in \Oscr(D)$, which come from the kernel $\omega(x,\zeta_i)$, such that
\begin{equation}\label{eq:estimate0}
	\Big| f(b,x) - \sum_{i} f(b,\zeta_i)g_i(x)\Big| <\epsilon
	\quad \text{for all $b\in B_0$ and $x\in K'$}. 
\end{equation}
By the Oka--Weil theorem, we can approximate each 
$g_i$ uniformly on $K'$ by holomorphic functions $g_i:X\to\C$. 
This gives uniform approximation of $f$ on $B_0 \times K'$
by continuous functions $F:B_0\times X\to \C$ 
which are holomorphic on the fibres $\{b\}\times X$. 
It follows from \eqref{eq:inclusions0} and the Cauchy 
estimates that $F-f$ can be arbitrarily small in 
$\Cscr^{0,s}(K\cap (B_0\times X))$ for a given $s\in\Z_+$. 

In the case $l=0$ and with $B$ a paracompact Hausdorff space,
the proof is completed just as above.
Assume now that $l>0$, so $B$ is a manifold of class $\Cscr^l$.
As before, we cover $K$ by open sets of the form 
$B_j \times U_j \subset B\times X$ such that the cover $\{B_j\}_j$
of $B$ is locally finite, the set $\overline B_j$ is 
compact with $\Cscr^l$ boundary for every $j$, and we have that 
\[ 
	K_j:= K\cap (\overline B_j \times X) \subset 
	\overline B_j \times U_j \subset U.
\] 
Choose a $\Cscr^l$ partition of unity $\{\chi_j\}_j$ 
on $B$ subordinate to the cover $\{B_j\}_j$. 
Fix a $j$ and choose the sets $K'\subset D$ as in the special case 
described above with $j=0$. We represent $f$ by an integral 
of the form \eqref{eq:integral} for $b\in \overline B_j$.
Given $\epsilon>0$, there are finitely many points
$\zeta_i\in bD$ and functions $g_i\in \Oscr(D)$ satisfying  
\eqref{eq:estimate0} for all $b\in \overline B_j$ and $x\in K'$. 
Furthermore, for any linear differential operator $L$ of order $\le l$
in the variable $b\in B_j$ we have that  
$L f(b,x)=\int_{\zeta\in bD} L f(b,\zeta)\omega(x,\zeta)$.
By adding more points $\zeta_i\in bD$
to the Riemann sum if necessary, we approximate 
$L f(b,x)$ uniformly on $\overline B_j\times K'$ by functions 
$\sum_{i} L f(b,\zeta_i)g_i(x)= L \sum_{i} f(b,\zeta_i)g_i(x)$. 
Applying the Cauchy estimates in the $x$-variable, 
we see that $f$ can be approximated in $\Cscr^{l,s}(K_j)$
by functions $F_j: \overline B_j \times X \to\C$ of the form
\[
	F_j(b,x) = \sum_i h_{j,i}(b) g_{j,i}(x), 
	\quad b\in \overline B_j,\ x\in X, 
\]
where $h_{j,i}\in \Cscr^l(\overline B_j)$ and $g_{j,i}\in\Oscr(X)$. 
We define the function $F:B\times X\to \C$ by 
\begin{equation}\label{eq:F}
	F(b,x)=\sum_j \chi_j(b) F_j(b,x) 
	= \sum_{j,i} \chi_j(b) h_{j,i}(b) g_{j,i}(x)\quad
	\text{for $b\in B$ and $x\in X$.}
\end{equation}
Clearly, $F$ approximates $f$ to a given precision in the fine 
$\Cscr^{l,s}$ topology on $K$ provided that $F_j|_{K_j}$ is sufficiently 
close to $f|_{K_j}$ in $\Cscr^{l,s}(K_j)$ for every $j$. 
(When differentiating $F(b,x)$ on the variable $b$, the functions
$\chi_j$ get differentiated as well, so it is important to keep them
fixed when approximating $f$ by $F_j$ in the $\Cscr^{l,s}(K_j)$ topology.)

Finally, if $B$ is a closed $\Cscr^l$ submanifold of $\R^n\subset \C^n$, possibly with boundary, 
we can apply \cite[Theorem 1]{RangeSiu1974} by Range and Siu 
to approximate each function $\chi_j h_{j,i}\in \Cscr^l(B)$ in 
\eqref{eq:F} (which has compact support contained in $B_j$) 
in the fine $\Cscr^l(B)$ topology by an entire function 
$\tilde h_{j,i} \in\Oscr(\C^n)$. 
(Another argument is to extend $\chi_j h_{j,i}$ from the submanifold 
$B\subset\R^n$ to a $\Cscr^l$ function on $\R^n$ and then approximate 
it in the fine $\Cscr^l(\R^n)$ topology by entire functions using 
Carleman's theorem \cite{Carleman1927}.) 
The function $\wt F=\sum_{j,i}\tilde h_{j,i} g_{j,i}$ is then holomorphic on 
$\C^n\times X$ and it approximates $f$ in the fine $\Cscr^{l,s}$
topology on $K$. For $l=0$, the same holds if $B$ is any  
closed subset of $\R^n$, which is seen by combining 
Tietze's extension theorem with Carleman's approximation theorem.
\end{proof}

%
%
We now prove Theorem \ref{th:Runge} for arbitrary pair of 
integers $k\ge 0$ and $0 \le l \le k+1$. 
Let $\Jscr=\{J_b\}_{b\in B}$ and 
$K\subset  X$ be as in the theorem, and pick a
compact Runge set $L\subset X$ containing $K$ in its interior.
By the argument in the beginning of the proof, we may assume 
that $K$ contains the finite set $A\cap L$ in its interior. 
Choose a smoothly bounded Runge domain $\Omega\Subset X$
such that $L\subset \Omega$. Fix a point $b_0\in B$. 
By Corollary \ref{cor:Hamilton} there are a 
neighbourhood $B_0\subset B$ of $b_0$ and a map
\begin{equation}\label{eq:Phi}
	\Phi:B_0\times \Omega \to 
	\Phi(B_0\times \Omega)\subset B_0\times X,
	\quad 
	\Phi(b,x)=(b,\Phi_b(x))
\end{equation}	
in $\Cscr^{l,(k+1,\alpha)}(B_0\times \Omega,X)$ (hence, of class $\Cscr^l$ 
jointly in both variables $(b,x)$) such that for every $b\in B_0$
the map $\Phi_b:\Omega\to \Phi_b(\Omega)\subset X$ 
is a biholomorphism from $(\Omega,J_b)$ 
onto $(\Phi_b(\Omega),J_{b_0})$ satisfying 
\begin{equation}\label{eq:phia}
	\Phi_b(a)=a\ \ \text{for all $a\in A\cap L$}, 
	\ \text{and}\ \Phi_{b_0}=\Id_\Omega. 
\end{equation}	
Clearly, $\Phi$ has a continuous inverse $\Phi^{-1}(b,z)=(b,\psi(b,z))$, 
and if $l>0$ then $\Phi^{-1}$ and hence $\psi$ are of class $\Cscr^{l}$ 
by the inverse function theorem. The observations in the following lemma
are simple consequences of the chain rule and we leave the proof 
to the reader.

%
%
\begin{lemma}\label{lem:composition} 
\begin{enumerate}[\rm (a)] 
\item If $\Phi$ \eqref{eq:Phi} is of class $\Cscr^{l,k+1}$ and 
$l\le k+1$, then $\Phi^{-1}$ is of class $\Cscr^{l,k+1-l}$.
\item If $z=f(b,x)$ is of class $\Cscr^{l,k}$ and $g(b,z)$ is of class 
$\Cscr^{l,l+k}$, then $g(b,f(b,x))$ is of class $\Cscr^{l,k}$.
\end{enumerate}
\end{lemma}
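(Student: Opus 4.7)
The plan is to prove part (b) by a direct iteration of the chain rule, and then deduce part (a) by induction on $l$, using (b) together with the implicit differentiation identities for $\psi$.

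For (b), write $h(b,x)=g(b,f(b,x))$. Iterating the chain rule (Faà di Bruno), every mixed partial $\di_b^i\di_x^j h$ with $i\le l$, $j\le k$ is a finite sum of products of the form
\[
\di_b^{p}\di_z^{q} g(b,f(b,x))\cdot \prod_{\nu=1}^{q}\di_b^{r_\nu}\di_x^{s_\nu} f(b,x).
\]
A short bookkeeping argument -- each of the $i+j$ differentiations either lands on the $b$-slot of $g$ (contributing to $p$), lands on the $z$-slot of $g$ (creating a new $f$-factor and contributing to $q$), or raises the order of an existing $f$-factor -- yields $p\le i$, $p+q\le i+j$, $\sum_\nu r_\nu\le i-p$, $\sum_\nu s_\nu\le j$. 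In particular $p\le l$, $q\le l+k$, $r_\nu\le l$ and $s_\nu\le k$ for every $\nu$, so each factor is well-defined and jointly continuous on $B\times X$ under the hypotheses $g\in\Cscr^{l,l+k}$ and $f\in\Cscr^{l,k}$. Thus $h\in\Cscr^{l,k}$.

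For (a), write $\Phi^{-1}(b,z)=(b,\psi(b,z))$. Since each $\phi_b$ is a diffeomorphism, $\di_x\phi$ is invertible along $\Phi$, and differentiating the identity $\phi(b,\psi(b,z))=z$ in $z$ and in $b$ yields
\[
\di_z\psi(b,z)=\bigl(\di_x\phi(b,\psi(b,z))\bigr)^{-1},\qquad
\di_b\psi(b,z)=-\di_z\psi(b,z)\cdot\di_b\phi(b,\psi(b,z)).
\]
The induction proceeds on $l$. The base case $l=0$ is the classical parametric inverse function theorem: iterated $z$-differentiation of the first identity, together with joint continuity of $\phi$ and its $x$-derivatives up to order $k+1$, gives $\psi\in\Cscr^{0,k+1}$. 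For the inductive step, applying the statement at $l-1$ yields $\psi\in\Cscr^{l-1,k+2-l}$, whence $\di_z\psi\in\Cscr^{l-1,k+1-l}$. I then invoke part (b) with parameters $(l',k')=(l-1,k+1-l)$ (note $l'+k'=k$): the outer function $\di_b\phi$ lies in $\Cscr^{l-1,k+1}\subset\Cscr^{l-1,k}=\Cscr^{l',l'+k'}$ and the inner $\psi$ lies in $\Cscr^{l-1,k+2-l}\subset\Cscr^{l-1,k+1-l}=\Cscr^{l',k'}$, so the composition $\di_b\phi(b,\psi(b,z))$ belongs to $\Cscr^{l-1,k+1-l}$. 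Multiplying by $\di_z\psi$ gives $\di_b\psi\in\Cscr^{l-1,k+1-l}$, which together with the inductive hypothesis upgrades $\psi$ to $\Cscr^{l,k+1-l}$.

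The main obstacle is purely the index bookkeeping: one must verify that the loss of one $z$-derivative per $b$-derivative exhibited by the inversion formula in (a) matches exactly the ``$l+k$ $z$-derivatives of the outer function'' hypothesis of (b), and the argument above does precisely this. Beyond the chain rule and the parametric inverse function theorem, no analytic input is required.
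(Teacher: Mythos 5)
Your argument is correct, and it is essentially the route the paper intends: the lemma is stated with the proof left to the reader as a consequence of the chain rule, and your Fa\`a di Bruno bookkeeping for (b) is exactly the argument sketched later in the proof of Theorem \ref{th:dibar} (``each partial derivative of biorder $(l,s-l)$ is a sum of terms obtained by precomposing a derivative of biorder $(l,s)$ of $g$ with a derivative of biorder $(l,s-l)$ of $\Psi_b$''). Your induction on $l$ for (a), via the implicit-differentiation identities for $\psi$ and an application of (b) with the shifted indices $(l-1,k+1-l)$, is the expected elaboration and the index count matches the stated loss $\Cscr^{l,k+1}\mapsto\Cscr^{l,k+1-l}$; the only points left tacit (existence of $\di_b\psi$ via the jointly $\Cscr^1$ implicit function theorem when $l\ge 1$, and commutation of the continuous mixed partials) are routine.
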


%
%
\begin{lemma}\label{lem:Xholomorphic}
Assume that $0\le l\le k+1$, the function $f:B_0\times\Omega\to\C$
is of class $\Cscr^{l,0}$, $f(b,\cdotp):\Omega\to\C$ is $J_b$-holomorphic
for every $b\in B_0$, and $\Phi$ is as above \eqref{eq:Phi}.
Then, the function $F=f\circ\Phi^{-1}:\Phi(B_0\times \Omega)\to\C$ 
is of class $\Cscr^{l,\infty}$ in the smooth structure on 
$X$ determined by $J_{b_0}$, 
$F(b,\cdotp):\Phi_b(\Omega)\to\C$ is $J_{b_0}$-holomorphic 
for every $b\in B_0$, and $f$ is of class $\Cscr^{l,(k+1,\alpha)}$. 
The analogous result holds for maps to any complex manifold
in place of $\C$.
\end{lemma}

\begin{proof}
Clearly, $F$ is continuous. Since the function $F(b,\cdotp)=f(b,\psi(b,\cdotp))$ 
is a composition of the $(J_{b_0},J_b)$-holomorphic
map $\psi(b,\cdotp)$ and the $J_b$-holomorphic function $f(b,\cdotp)$,
$F(b,\cdotp)$ is $J_{b_0}$-holomorphic for every $b\in B_0$. 
It follows that $F$ is of class $\Cscr^{0,\infty}$ in the complex structure 
$J_{b_0}$ on $X$, and hence of class $\Cscr^{0,(k+1,\alpha)}$ in the 
original smooth structure on $X$ (see Theorem \ref{th:isothermal}).
Since $f=F\circ \Phi$ and $\Phi$ is of class $\Cscr^{l,(k+1,\alpha)}$, 
we infer that $f$ is of class $\Cscr^{0,(k+1,\alpha)}$.
This proves the lemma for $l=0$. Suppose now that $l>0$.
Then, $\psi$ is of class $\Cscr^{l}$. We shall prove that $F$ is of 
class $\Cscr^l$ in the variable $b\in B_0$, 
and hence of class $\Cscr^{l,\infty}$ 
(since it is $J_{b_0}$-holomorphic in the space variable). 
We make the calculation in a local coordinate $b$ of class 
$\Cscr^l$ on $B_0$, and we assume for simplicity of exposition
that $B_0=[0,1]\subset \R$. On $X$, we use a 
$J_{b_0}$-holomorphic coordinate $z$. Differentiating the 
equation $F(b,z)=f(b,\psi(b,z))$ on $b$ and denoting 
the partial derivatives by the lower case indices gives
\begin{equation}\label{eq:Fb}
	F_b(b,z)=f_b(b,\psi(b,z)) + f_x(b,\psi(b,z)) \psi_b(b,z).
\end{equation}
Here, $f_x$ denotes the total derivative of $f$ with respect
to a smooth local coordinate $x=(u,v)$ on $X$.
This shows that $F_b(b,z)$ exists and is continuous in $(b,z)$. 
Since $F$ is also holomorphic in $z$, it follows that $F\in \Cscr^{1,\infty}$ 
and therefore $f=F\circ\Phi\in \Cscr^{1,(k+1,\alpha)}$
(see Lemma \ref{lem:composition} (b)).
Suppose now that $l\ge 2$, so $k+1\ge l\ge 2$, 
$\psi\in \Cscr^2$, and $f\in \Cscr^{2,0}\cap \Cscr^{1,2}$. 
Differentiating the equation \eqref{eq:Fb} on $b$ gives
\[
	F_{bb} = f_{bb} + 2 f_{bx}\psi_b + f_{xx}(\psi_b)^2
	+ f_x \psi_{bb}, 
\]	
and $F_{bb}$ is continuous in $(b,z)$.
Since it is holomorphic in $z$, it follows that 
$F\in \Cscr^{2,\infty}$ and therefore $f\in \Cscr^{2,(k+1,\alpha)}$.
This process can be continued up to $l=k+1$ but not beyond.
\end{proof}

We continue with the proof of Theorem \ref{th:Runge}. 
With $\Phi$ as in \eqref{eq:Phi}, the set 
\[
	\wt K =\Phi(B_0\times K) \subset B_0\times X
\]
is Runge in $B_0\times X$. Indeed, its fibre 
$\wt K_b=\Phi_b(K)$ is compact and Runge in 
$\Phi_b(\Omega)$ for every $b\in B_0$, and since 
$\Phi_b(\Omega)$ is Runge in $X$ (see Remark \ref{rem:homotopic}),
$\wt K_b$ is Runge in $X$ as well. Furthermore, the fibres 
$\wt K_b$ depend continuously on $b\in B_0$. Recall that $U$ is an 
open neighbourhood of $B\times K$ and $f:U\to\C$
is a $\Jscr$-holomorphic function of class $\Cscr^{l,0}$. 
Pick an open subset $V\subset B_0\times X$
such that $\overline V\cap (B_0\times X) \subset U\cap (B_0\times \Omega)$
and set $\wt V=\Phi(V)\subset B_0\times X$. We have 
$f=\tilde f \circ \Phi$ where by Lemma \ref{lem:Xholomorphic}
the function $\tilde f=f\circ\Phi^{-1}:\wt V\to \C$ is fibrewise
$J_{b_0}$-holomorphic and of class $\Cscr^{l,\infty}$.
By Lemma \ref{lem:Ck}, for any given $s\in \Z_+$ we can approximate 
$\tilde f$ in $\Cscr^{l,s}(\wt K)$ by a fibrewise $J_0$-holomorphic function 
$\wt F:B_0\times X\to \C$ of class $\Cscr^{l,\infty}$.
If $s$ is chosen big enough and the approximation is close enough  
then the function $F=\wt F\circ\Phi:B_0\times \Omega\to \C$
is $\Jscr$-holomorphic, of class $\Cscr^{l,(k+1,\alpha)}$,
and it approximates $f$ in the $\Cscr^{l,(k+1,\alpha)}$ topology 
on $B_0\times K$.
This gives a locally finite open cover $B_j$ of $B$ 
such that $f$ can be approximated as closely as desired 
in the $\Cscr^{l,(k+1,\alpha)}$ topology on $B_j\times K$  
by $\Jscr$-holomorphic functions $F_j:B_j\times \Omega\to \C$ 
of class $\Cscr^{l,(k+1,\alpha)}$. 
Choose a $\Cscr^l$ partition of unity $\{\chi_j\}_j$ 
on $B$ with $\supp\,\chi_j\subset B_j$ for every $j$. 
Assuming that $F_j$ is close enough to $f$ in
$\Cscr^{l,(k+1,\alpha)}(B_j\times K)$ for every $j$, 
the $\Jscr$-holomorphic function $F:B\times \Omega\to\C$ defined by 
\eqref{eq:partition} is of class $\Cscr^{l,(k+1,\alpha)}$ 
and it satisfies the required approximation condition.

It remains to obtain the interpolation conditions (c)
at the points of $A'=A\cap L =\{a_1,\ldots,a_m\} \subset \mathring K$. 
It suffices to explain this in the local situation given by Lemma \ref{lem:Ck};
the subsequent steps in the proof preserve this condition up to order $k+1$. 
In view of \eqref{eq:Phi} and \eqref{eq:phia}, the points
of $A'$ are fixed under the maps $\Phi_b$.
Choose $r\in \Z_+$ and set $n=m(r+1)$; this is the complex dimension 
of the space of complex $r$-jets 
of holomorphic functions on $X$ at the points of $A'$. 
By the classical function theory on 
open Riemann surfaces, we can find a family of 
$J_{b_0}$-holomorphic functions $\xi_t:X\to\C$, 
depending holomorphically on $t\in\C^n$,
such that $\xi_0=0$ and for every collection of $r$-jets at the points 
of $A'$ there is precisely one member 
$\xi_t$ of this family which assumes these $r$-jets at the 
given points. Let $F$ be a function in \eqref{eq:F}
which approximates $f$ to a given precision in the fine 
$\Cscr^{l,(k+1,\alpha)}$ topology on $B\times K$. Hence, the $r$-jets of 
the function $F_b=F(b,\cdotp)$ at the points of $A'$ are close 
to the respective $r$-jets of $f_b=f(b,\cdotp)$ for any $b\in B$.  
We subtract from each $F_b$ the appropriate uniquely determined member 
of the family $\xi_t$ so that the $r$-jets of the new function 
at the points of $A'$ agree with those of $f_b$ (i.e., the interpolation 
condition (c) holds.) This does not affect the approximation condition (b) 
very much since the jets of $\xi_t$ in question are close to those of the 
zero function, and hence $\xi_t$ is close to zero on $K$. 
This completes the proof of Theorem \ref{th:Runge}.
\end{proof}

%
%
\begin{proof}[Proof of Theorem \ref{th:Mergelyan}]
Let $f:B\times K\to\C$ be as in the theorem.
Set $f_b=f(b,\cdotp):K\to\C$ for any $b\in B$.
Fix $b_0\in B$. Choose a smoothly bounded domain 
$\Omega\Subset X$ containing $K$. Let $B_0\subset B$ 
be a neighbourhood of $b_0$ and 
\[
	\Phi:B_0\times \Omega \to \Phi(B_0\times \Omega)\subset B_0\times X
\]
be a map of class $\Cscr^{0,(1,\alpha)}$  
furnished by Corollary \ref{cor:Hamilton}. 
Thus, $\Phi(b,x)=(b,\Phi_b(x))$ where 
$\Phi_b:\Omega\to \Phi_b(\Omega)\subset X$ 
is a biholomorphism from $(\Omega,J_b)$ 
onto $(\Phi_b(\Omega),J_{b_0})$ satisfying 
$\Phi_b(a)=a$ for all $a\in A\cap L$, and $\Phi_{b_0}=\Id_\Omega$. 
Applying the Bishop--Mergelyan theorem
\cite{Bishop1958PJM} we can approximate the function
$f_{b_0}:K\to \C$ uniformly on $K$ by functions $\tilde f_{b_0}$ 
defined on open neighbourhoods $U\subset X$ of $K$. 
For $b$ close enough to $b_0$ the function 
$\tilde f_b:= \tilde f_{b_0}\circ \Phi_b$ is $J_b$-holomorphic
on a neighbourhood of $K$ and it approximates $f_b$ to a given 
precision uniformly on $K$.
This gives local approximation near any given point of $B$,
and the proof can be concluded by applying a continuous
partition of unity on $B$ as in the proof of Theorem \ref{th:Runge}.
Interpolation in the points of the finite set $A\subset \mathring K$
is handled as in the proof of Theorem \ref{th:Runge}.
\end{proof}

The same proof gives the following version of Theorem \ref{th:Mergelyan} 
with fine $\Cscr^0$ approximation on proper subsets of $B\times X$; 
compare with the Runge approximation Corollary \ref{cor:Runge}.

%
%
\begin{corollary}\label{cor:Mergelyan}
Assume that $B$ is a paracompact Hausdorff space, $X$
is a smooth open orientable surface, $\Jscr=\{J_b\}_{b\in B}$ 
is a family of complex structures on $X$ of class $\Cscr^{\alpha}$
for some $0<\alpha<1$, and $K\subset B\times X$
is a closed Runge subset (see Definition \ref{def:Runge}). 
Given a continuous function $f:K \cup (Q\times X)\to\C$ such that 
$f(b,\cdotp)$ is $J_b$-holomorphic on $\mathring K_b$ for all
$b\in B$ and is $J_b$-holomorphic on $X$ for all $b\in Q$, 
we can approximate $f$ in the fine $\Cscr^{0}$ topology on $K$
by $\Jscr$-holomorphic functions $F:B\times X\to \C$
satisfying $F=f$ on $Q\times X$.
\end{corollary}


%
%
%
%
\section{The Oka principle for maps from families of open Riemann surfaces
to Oka manifolds}
\label{sec:Oka}

In this section we prove Theorem \ref{th:Oka}. The same proof 
gives the generalisation in Theorem \ref{th:Okabis}.
We then obtain a couple of Mergelyan-type approximation theorems
for manifold-valued maps from families of open Riemann surfaces;
see Theorems \ref{th:Mergelyan-manifold} 
and \ref{th:Mergelyan-admissible}.
With future applications in mind, the technical results 
in Lemmas \ref{lem:graph} and \ref{lem:main} are obtained 
in the bigger generality when $X$ is a Stein manifold 
of arbitrary dimension. 

Recall that a compact set $K$ in a complex manifold $X$ is said to be 
a {\em Stein compact} if it admits a basis of open Stein neighbourhoods.
Every compact $\Oscr(X)$-convex set in a Stein manifold
$X$ is a Stein compact (see \cite[Theorem 5.1.6]{Hormander1990}). 
Given a complex manifold $Y$, we denote by 
\[
	\Oscrc(K,Y)
\] 
the space of continuous maps $K\to Y$ which are uniform 
limits of holomorphic maps on open neighbourhoods of $K$ in $X$.
Furthermore, we denote by
\[ 
		\Oscrcl(K,Y)
\]
the space of continuous maps $f:K\to Y$ with the property 
that every point $x \in K$ has an 
open neighbourhood $U\subset X$ such that 
$f|_{K\cap \overline U}\in \Oscrc(K\cap \overline U,Y)$. Clearly, 
we have the inclusions 
\[
	 \{f|_K: f\in \Oscr(K,Y)\} 
	 \subset \Oscrc(K,Y) \subset \Oscrcl(K,Y)\subset \Ascr(K,Y),
\] 
where $\Oscr(K,Y)$ is the space of holomorphic maps $U\to Y$ on 
open neighbourhoods $U\subset X$ of $K$ 
and $\Ascr(K,Y)$ is the space of continuous maps $K\to Y$
which are holomorphic in $\mathring K$.
The importance of the space $\Oscrcl(K,Y)$ lies in the following result 
of Poletsky \cite[Theorem 3.1]{Poletsky2013}.

%
%

\begin{theorem}[Poletsky \cite{Poletsky2013}] \label{th:Poletsky3.1} 
If $K$ is a Stein compact in a complex manifold $X$, $Y$ 
is a complex manifold and $f\in \Oscrcl(K,Y)$, then the graph of $f$ 
on $K$ is a Stein compact in $X\times Y$. 
\end{theorem}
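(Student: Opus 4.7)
The plan is to construct a basis of open Stein neighbourhoods of $\Gamma_f$ in $X\times Y$ by exhibiting, on each candidate neighbourhood, a strongly plurisubharmonic exhaustion function. The crucial observation is that, locally near any point of $\Gamma_f$, the approximability of $f$ by holomorphic maps coming from the hypothesis $f\in\Oscrcl(K,Y)$ yields plurisubharmonic functions of the form $|y-g(x)|^2$ in coordinates; these are small exactly near $\Gamma_f$ and serve as the building blocks.

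Since $K$ is a Stein compact, I fix an open Stein neighbourhood $W\subset X$ of $K$ with a smooth strictly plurisubharmonic exhaustion $\phi$. By compactness of $\Gamma_f$ I cover it with finitely many boxes $U_j\times V_j$ $(j=1,\dots,N)$, where $U_j\Subset W$ and $V_j\subset Y$ is a relatively compact holomorphic coordinate chart with embedding $\tau_j:V_j\hra\C^{m_j}$ and $f(K\cap\overline{U_j})\subset V_j$. The definition of $\Oscrcl(K,Y)$ supplies, for each $j$ and each prescribed $\epsilon>0$, an open set $\tilde U_j\subset X$ containing $K\cap\overline{U_j}$ together with a holomorphic map $g_{j,\epsilon}\in\Oscr(\tilde U_j,V_j)$ satisfying $\sup_{K\cap\overline{U_j}}|\tau_j\circ g_{j,\epsilon}-\tau_j\circ f|<\epsilon$. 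The function
\[
\sigma_{j,\epsilon}(x,y)=\bigl|\tau_j(y)-\tau_j(g_{j,\epsilon}(x))\bigr|^2
\]
is plurisubharmonic on $\tilde U_j\times V_j$, vanishes on the graph of $g_{j,\epsilon}$, and is at most $\epsilon^2$ on $\Gamma_f\cap(\tilde U_j\times V_j)$, so it is small exactly near $\Gamma_f\cap(U_j\times V_j)$.

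The main obstacle is to patch the local functions $\sigma_{j,\epsilon}$ into a single non-negative continuous plurisubharmonic function $\sigma$ on an open neighbourhood $\Omega$ of $\Gamma_f$ in $X\times Y$, with $\sigma$ uniformly small on $\Gamma_f$ and growing outside so that its sublevel sets eventually shrink inside any prescribed neighbourhood, because smooth partitions of unity destroy plurisubharmonicity. I would carry out this patching by Richberg's regularised-maximum technique: after adding to each $\sigma_{j,\epsilon}$ a strongly plurisubharmonic correction supplied by the chart $\tau_j$ and by $\phi$ that forces it to blow up near the boundary of its box while remaining small near $\Gamma_f$, the modified functions dominate each other on overlaps outside a neighbourhood of $\Gamma_f$, and their regularised maximum over the finite cover produces such a $\sigma$ on a common open $\Omega\supset\Gamma_f$.

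To finish, I patch strongly plurisubharmonic chart functions (for instance $-\log(r_j^2-|\tau_j|^2)$ on concentric shrunken balls $V_j'\Subset V_j$) into a strongly plurisubharmonic function $\psi$ on an open neighbourhood $Y_0\supset f(K)$ by the same regularised-maximum device, and set
\[
\Phi(x,y)=\phi(x)+\psi(y)+C\,\sigma(x,y)
\]
on $V:=\Omega\cap(W\times Y_0)$ for a large constant $C$. Then $\Phi$ is strongly plurisubharmonic and exhaustive on $V$, so every sublevel set $\{\Phi<c\}$ is an open Stein subset of $V$. By continuity of $\Phi$ and the bound on $\sigma$ along $\Gamma_f$, for any prescribed neighbourhood of $\Gamma_f$ one can choose $\epsilon$, $C$, and $c$ so that $\{\Phi<c\}$ lies inside it, yielding the required basis of Stein neighbourhoods of $\Gamma_f$.
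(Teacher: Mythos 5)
A preliminary remark: the paper does not prove this statement at all; it is quoted verbatim from Poletsky \cite{Poletsky2013}, and the paper only uses it (in Lemma \ref{lem:graph}). So the relevant comparison is with Poletsky's proof, which is substantially more involved than your sketch, and your attempt should be judged as a proposed new proof. Your local building blocks are the right ones (the functions $|\tau_j(y)-\tau_j(g_{j,\epsilon}(x))|^2$ attached to local holomorphic approximants), but the argument has genuine gaps at exactly the points where the theorem is hard.

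The decisive gap is the patching step, which is asserted rather than carried out, and the mechanism you describe points the wrong way. For a (regularised) maximum of functions $\tilde\sigma_j$, each defined only on its box, to be plurisubharmonic on a neighbourhood $\Omega$ of $\Gamma_f$, each $\tilde\sigma_j$ must be \emph{dominated} by the competing functions near $\partial(U_j\times V_j)\cap\Omega$, so that the maximum is locally computed without $\tilde\sigma_j$ there. A correction that makes $\tilde\sigma_j$ blow up at its own box boundary has the opposite effect: approaching a point of $\partial(U_j\times V_j)\cap\Omega$ from inside the box, the maximum tends to $+\infty$ while its value at the point is finite, so upper semicontinuity (indeed local boundedness) fails. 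Moreover such points cannot be avoided: unless a single chart covers $K$, the sets $\partial U_j\times V_j$ meet $\Gamma_f$ over $K\cap\partial U_j$, so your two requirements on the correction (blow up near the box boundary, stay small near $\Gamma_f$) contradict each other precisely on the delicate locus. What this hides is the real difficulty: $g_{j,\epsilon}$ and $g_{i,\epsilon}$ are close to each other only over $K$; off $K$ their graphs diverge, so each tube $\{\sigma_{j,\epsilon}<\delta\}$ is a neighbourhood of the whole graph of $g_{j,\epsilon}$, not of $\Gamma_f$, and truncating these tubes compatibly without destroying plurisubharmonicity is essentially the content of Poletsky's theorem, not a routine application of Richberg gluing.

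A second, independent flaw is the construction of $\psi$: a strictly plurisubharmonic function on a neighbourhood $Y_0$ of $f(K)$ in $Y$ need not exist. Take $X=\C$, $K$ a closed disc of large radius, $Y$ an elliptic curve, and $f$ the restriction to $K$ of the universal covering map $\C\to Y$; then $f$ is holomorphic near $K$, hence lies in $\Oscrcl(K,Y)$, and $f(K)=Y$, so no neighbourhood of $f(K)$ carries a strictly plurisubharmonic function (maximum principle along the compact curve). The conclusion of the theorem still holds in this example (for $f$ holomorphic near $K$ it follows from Siu's theorem applied to the graph over a Stein neighbourhood of $K$), but your exhaustion $\Phi=\phi+\psi+C\sigma$ cannot be formed, and the same gluing problem as above recurs in the proposed construction of $\psi$. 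Any successful argument must stay localized near the graph itself, which projects bijectively to $K$, and must produce strict plurisubharmonicity in the fibre directions without assuming anything about the global position of $f(K)$ in $Y$; these ingredients are missing from the proposal.
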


We consider $\R^n$ as the standard real subspace 
of $\C^n$. Using Theorem \ref{th:Poletsky3.1} we prove 
the following.

%
%
\begin{lemma}\label{lem:graph}
Assume that $X$ is a Stein manifold, $\pi:\C^n\times X\to \C^n$ is 
the projection, $K\subset \C^n\times X$ is a compact set 
such that $B:=\pi(K)\subset \R^n$ and 
$K_b=\{x\in X:(b,x)\in K\}$ is $\Oscr(X)$-convex for every $b\in B$,
$U$ is an open neighbourhood of $K$ in $B \times X$, 
$Y$ is a complex manifold with a distance function $\dist_Y$
inducing its manifold topology, 
and $f:U\to Y$ is a continuous map
such that for every $b\in B$ the map 
$f_b=f(b,\cdotp):U_b=\{x\in X:(b,x)\in U\}\to Y$ is holomorphic. 
Then, the graph 
\begin{equation}\label{eq:graph}
	G_f=\{(b,x,f(b,x)) : (b,x)\in K\} \subset \C^n\times X\times Y
\end{equation} 
of $f$ on $K$ is a Stein compact in $\C^n\times X\times Y$ 
and $f\in \Oscrc(K,Y)$. Furthermore, given $\epsilon>0$ 
there are a neighbourhood $V$ of $K$ in $\C^n\times X$,
a neighbourhood $\wt U\subset U\cap V$ of $K$ in $B\times X$, 
a holomorphic map $\tilde f:V\to Y$, 
and a homotopy $g_t:\wt U \to Y$ $(t\in I=[0,1])$
satisfying the following conditions. 
\begin{enumerate}[\rm (a)]  
\item $g_0=f|_{\wt U}$ and $g_1=\tilde f|_{\wt U}$.
\item $g_t(b,\cdotp):\wt U_b \to Y$ is holomorphic 
for every $b\in B$ and $t\in I$. 
\item $\sup_K \dist_Y(g_t,f)<\epsilon$ for all $t\in I$. 
\end{enumerate}
If in addition $B$ is a $\Cscr^l$ submanifold of $\R^n$ 
(possibly with boundary) for some $l\in\N$
and $f\in \Cscr^{l,0}(U,Y)$, then for any $s\in\Z_+$ and after
shrinking $U\supset K$, the homotopy $f_t$ can be chosen such that, 
in addition to the above, $f_t\in \Cscr^{l,s}(U,Y)$ for all $t\in I$ and the 
approximation in (c) holds in $\Cscr^{l,s}(K)$.
\end{lemma}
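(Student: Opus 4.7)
The plan is to reduce the entire problem to approximation of a suitable graph map in $\C^M$ via an embedding of a Stein neighbourhood of $G_f$ into a complex Euclidean space, and then to apply Lemma \ref{lem:Ck} coordinate-wise.

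By Remark \ref{rem:HC}(b), the hypotheses $B \subset \R^n$ and $\Oscr(X)$-convexity of the fibres $K_b$ imply that $K$ is $\Oscr(\C^n \times X)$-convex, hence a Stein compact in $\C^n \times X$. To verify $f \in \Oscrcl(K, Y)$, fix $p_0 = (b_0, x_0) \in K$ and a holomorphic chart $(W, \phi)$ on $Y$ with $\phi(W) \subset \C^d$ and $f(p_0) \in W$. Using Remark \ref{rem:HC}(a), we choose a compact neighbourhood $B_0 \subset B$ of $b_0$ and a compact $\Oscr(X)$-convex neighbourhood $K^{(0)} \subset X$ of $x_0$ with $L := B_0 \times K^{(0)} \subset U$ and $f(L) \subset W$. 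Since every fibre of $L$ equals the $\Oscr(X)$-convex set $K^{(0)}$, applying Lemma \ref{lem:Ck} to each component of $\phi \circ f|_L$ yields uniform approximation on $L$ by entire maps $\C^n \times X \to \C^d$, and postcomposing with $\phi^{-1}$ shows $f|_L \in \Oscrc(L, Y)$. Hence $f \in \Oscrcl(K, Y)$, and Theorem \ref{th:Poletsky3.1} gives that $G_f$ is a Stein compact in $M := \C^n \times X \times Y$.

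Next, choose a Stein open neighbourhood $\Omega$ of $G_f$ in $M$, embed $\iota: \Omega \hookrightarrow \C^M$ as a closed complex submanifold by Remmert--Bishop--Narasimhan, and use the Docquier--Grauert tubular neighbourhood theorem to obtain a Stein open set $\Theta \subset \C^M$ containing $\iota(\Omega)$ together with a holomorphic retraction $r: \Theta \to \iota(\Omega)$. Shrink $\wt U \subset U$ so that the continuous graph map
\begin{equation*}
	s_0: \wt U \to M, \qquad s_0(b, x) = (b, x, f(b, x)),
\end{equation*}
takes values in $\Omega$; by hypothesis $s_0(b, \cdot)$ is holomorphic on $\wt U_b$ for every $b \in B$. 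Apply Lemma \ref{lem:Ck} component-wise to $\iota \circ s_0: \wt U \to \C^M$ (a continuous map whose restriction to every fibre is holomorphic) to obtain a holomorphic map $\tilde{\sigma}: \C^n \times X \to \C^M$ arbitrarily close to $\iota \circ s_0$ uniformly on $K$. Pick an open neighbourhood $V \subset \C^n \times X$ of $K$ and shrink $\wt U$ further if necessary so that $\tilde{\sigma}(V) \subset \Theta$ and, for every $(b, x) \in \wt U$ and $t \in I$, the segment $(1 - t)\,\iota(s_0(b, x)) + t\, \tilde{\sigma}(b, x)$ lies in $\Theta$.

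Define
\begin{equation*}
	g_t(b, x) = p_Y \circ \iota^{-1} \circ r\bigl((1 - t)\,\iota(s_0(b, x)) + t\, \tilde{\sigma}(b, x)\bigr), \quad (b, x) \in \wt U,\ t \in I,
\end{equation*}
where $p_Y: M \to Y$ is the projection to the third factor, and set $\tilde{f} := p_Y \circ \iota^{-1} \circ r \circ \tilde{\sigma}: V \to Y$, which is holomorphic. Because $r$ is the identity on $\iota(\Omega)$, we have $g_0 = p_Y \circ s_0 = f|_{\wt U}$, while $g_1 = \tilde{f}|_{\wt U}$. For fixed $b$ the map $g_t(b, \cdot)$ is a composition of maps holomorphic in $x$, hence holomorphic in $x$; uniform closeness of $\tilde{\sigma}$ to $\iota \circ s_0$ on $K$, together with continuity of $r$, $\iota^{-1}$, and $p_Y$, yields $\sup_K \dist_Y(g_t, f) < \epsilon$ for all $t \in I$. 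In particular, $\tilde{f}|_K$ approximates $f$ uniformly to any precision, so $f \in \Oscrc(K, Y)$. The $\Cscr^{l,s}$ upgrade when $B$ is a $\Cscr^l$ submanifold of $\R^n$ and $f \in \Cscr^{l,0}(U, Y)$ is obtained by invoking the stronger conclusion of Lemma \ref{lem:Ck}: first, $f \in \Cscr^{l,\infty}(U, Y)$, so $\iota \circ s_0 \in \Cscr^{l,\infty}(\wt U, \C^M)$; second, $\tilde{\sigma}$ can be chosen to approximate $\iota \circ s_0$ in the fine $\Cscr^{l,s}(K)$ topology; and composition with the smooth and holomorphic maps $r$, $\iota^{-1}$, $p_Y$ preserves the $\Cscr^{l,s}$ estimate by the chain rule. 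The principal obstacle lies in the very first step, the verification that $f \in \Oscrcl(K, Y)$, where one must simultaneously secure $\Oscr(X)$-convex fibres of the localizing set $L$, containment $L \subset U$, and image in a single chart of $Y$; this is why we work with the rectangular product $B_0 \times K^{(0)}$ rather than with an arbitrary compact neighbourhood of $p_0$ in $K$.
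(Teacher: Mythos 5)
Your proof is correct and follows essentially the same architecture as the paper's: verify $f\in\Oscrcl(K,Y)$ locally, invoke Poletsky's theorem to get that $G_f$ is a Stein compact, pick a Stein neighbourhood $\Omega\supset G_f$, embed and retract via Docquier--Grauert, apply Lemma~\ref{lem:Ck} to the pushed-forward graph map, and recover the homotopy from convex combinations in the ambient Euclidean space. The one genuine (if minor) departure is in the local $\Oscrcl$-verification: you avoid Siu's Stein neighbourhood theorem by choosing $L=B_0\times K^{(0)}$ small enough that $f(L)$ lands inside a single holomorphic chart of $Y$ and then reducing to scalar approximation via Lemma~\ref{lem:Ck}; the paper instead applies Siu's theorem to the graph of $f_{b_0}$ over the Riemann fibre $K_{b_0}$, embeds a Stein neighbourhood, and uses the Docquier--Grauert retraction already at the local stage. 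Your chart-based reduction is slightly more elementary, whereas the paper's approach has the uniformity of using the same mechanism (Siu + embedding + retraction) at both the local and global steps. One small point worth making explicit: when you invoke the definition of $\Oscrcl$, the required neighbourhood $U'$ of $p_0$ must be open in the ambient Stein manifold $\C^n\times X$, not merely in $B\times X$; your $L=B_0\times K^{(0)}$ has empty interior in $\C^n\times X$. This is harmless---take a small complex ball $B_0'\subset\C^n$ around $b_0$ with $\overline{B_0'}\cap B\subset B_0$ and set $U'=B_0'\times\mathring{K^{(0)}}$, so that $K\cap\overline{U'}\subset L$ and the approximation restricts---but it should be said, since the paper glosses over the identical point.
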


\begin{proof}
By Remark \ref{rem:HC}, the set $K$ is 
$\Oscr(\C^n\times X)$-convex, whence a Stein compact.
We shall now verify that the map $f$ satisfies 
the conditions in Theorem \ref{th:Poletsky3.1}, and hence 
$G_f$ \eqref{eq:graph} is a Stein compact.

Fix a point $b_0 \in B$. Since $K$ is compact,
given a neighbourhood $V\subset X$ of the fibre $K_{b_0}$ we have 
\begin{equation}\label{eq:usc}
	K_{b}\subset V \ \ 
	\text{for all $b\in B$ sufficiently close to $b_0$}.
\end{equation}
Since the map $f_{b_0}:U_{b_0} \to Y$ is holomorphic
and $K_{b_0}$ is a Stein compact, 
the graph $G_{b_0}=\{(b_0,x,f_{b_0}(x)): x\in K_{b_0}\}$ 
has an open Stein neighbourhood 
$\Gamma \subset\C^n\times X\times Y$ by Siu's theorem \cite{Siu1976}
(see also \cite{Coltoiu1990}, \cite[Theorem 1]{Demailly1990}, 
and \cite[Theorem 3.1.1]{Forstneric2017E}). 
Choose a holomorphic embedding $\Theta:\Gamma\hra\C^N$.
By a theorem of Docquier and Grauert \cite{DocquierGrauert1960}
(see also \cite[Theorem 3.3.3]{Forstneric2017E})
there are a Stein neighbourhood $O\subset \C^N$
of $\Theta(\Gamma)$ and a holomorphic retraction 
$\rho:O\to \Theta(\Gamma)$. 
In view of \eqref{eq:usc} there is a compact neighbourhood 
$B_0\subset B$ of $b_0$ such that, setting 
\[
	S:=\{(b,x): b\in B_0,\ x\in K_b\} \subset U,
\]
we have that 
$
	\wt S:=\{(b,x,f(b,x)): (b,x)\in S\} \subset \Gamma.
$
Hence, the map  
\begin{equation}\label{eq:map-h}
	 h(b,x) := \Theta(b,x,f(b,x)) \in O\subset \C^N
\end{equation}
is well-defined on a neighbourhood of $S$ in $B\times X$,
and $h(b,\cdotp)$ is holomorphic on a neighbourhood of $K_b$
in $X$ for every $b\in B_0$.
By Lemma \ref{lem:Ck} we can approximate $h$ as closely as desired 
uniformly on $S$ by a holomorphic map $\tilde h:W\to\C^N$ 
from a neighbourhood $W\subset \C^n\times X$ of $S$. 
Assuming that the approximation is close enough and the
neighbourhood $W\supset S$ is small enough, 
we have that $\tilde h(W)\subset O$. 
Let $\tau:\C^n\times X\times Y\to Y$ denote the projection. The map 
\begin{equation}\label{eq:inY}
	\tilde f := \tau\circ \Theta^{-1} \circ \rho \circ \tilde h: W\to Y
\end{equation}
is then well defined and holomorphic, and 
it approximates $f$ uniformly on $S$.
Since this holds for every $b_0\in B$, we see that 
$f\in \Oscrcl(K,Y)$. Hence, Theorem \ref{th:Poletsky3.1} 
implies that $G_f$ \eqref{eq:graph} is a Stein compact.

To prove that $f\in \Oscrc(K,Y)$ and the last statement in the lemma, 
we apply the same argument with the entire parameter space $B$. 
Choose a Stein neighbourhood 
$\Gamma\subset \C^n\times X\times Y$ of $G_f$ \eqref{eq:graph},  
a holomorphic embedding $\Theta:\Gamma\hra \C^N$,
and a holomorphic retraction $\rho:O\to \Theta(\Gamma)$
from a neighbourhood $O\subset \C^N$ of $\Theta(\Gamma)$.
The map $h$ given by \eqref{eq:map-h} is now 
defined on a neighbourhood $\wt U \subset (B\times X)\cap U$ of $K$, 
and the map $h(b,\cdotp)$ is holomorphic on $\wt U_b$ for every $b\in B$.
By Lemma \ref{lem:Ck} we can approximate $h$ as closely as desired 
uniformly on $K$ by a holomorphic map $\tilde h:V\to\C^N$ 
from a neighbourhood $V\subset \C^n\times X$ of $K$. 
We may assume that $\tilde h(V)\subset O$. The map 
$\tilde f:V\to Y$ given by \eqref{eq:inY} is then holomorphic
and approximates $f$ on $K$.
Furthermore, if $\tilde h$ is close enough to $h$ 
on $K$ and shrinking the neighbourhood $\wt U \supset K$
if necessary, the family of convex combinations
\begin{equation}\label{eq:ht}
	h_t = (1-t)h + t\tilde h : \wt U \to\C^N,\quad t\in I
\end{equation}
assumes values in $O$. The family of maps  
\begin{equation}\label{eq:gt}
	g_t = \tau\circ \Theta^{-1} \circ \rho \circ h_t: \wt U \to Y,
	\quad t\in I
\end{equation}
is then a homotopy from $g_0=f|_{\wt U}$ to $g_1=\tilde f|_{\wt U}$
with the stated properties.
The last statement of the lemma follows by the same argument,
using Lemma \ref{lem:Ck} with approximation in $\Cscr^{l,s}(K)$. 
\end{proof}

The next result is a version of Lemma \ref{lem:Ck}
for maps with values in an Oka manifold and with 
homotopies added to the picture. This is the main 
ingredient in the proof of Theorem \ref{th:Oka}. 

%
%
\begin{lemma}\label{lem:main}
Assume that $B''\subset\R^n$ is a neighbourhood retract
and $B_0\subset B_1\subset B \subset B'$ are compact subsets of $B''$, 
each contained in the relative interior of the next one.
Let $X$ be a Stein manifold, $\pi:\C^n\times X\to \C^n$ be the projection, 
and $K\subset \C^n\times X$ be a compact subset such that
$\pi(K)\subset B$ and the fibre $K_b=\{x\in X:(b,x)\in K\}$ 
is $\Oscr(X)$-convex for every $b\in B$.
Assume that $U$ is an open neighbourhood of $K$ 
in $B' \times X$, $Y$ is an Oka manifold, 
and $f:B' \times X \to Y$ is a continuous map
such that for every $b\in B$ the map $f_b=f(b,\cdotp):X\to Y$
is holomorphic on $U_b=\{x\in X:(b,x)\in U\}$. 
Fix $\epsilon>0$ and $s\in \Z_+$. After shrinking the open set $U\supset K$,
there is a homotopy $f_t : B\times X \to Y$ $(t\in I=[0,1])$ 
with the following properties. 
\begin{enumerate}[\rm (a)]
\item $f_0=f|_{B\times X}$.
\item $f_t(b,\cdotp) : X\to Y$ is holomorphic on $U_b$
for every $b\in B$ and $t\in I$.
\item $f_t$ approximates $f$ in $\Cscr^{0,s}(K)$ to precision $\epsilon$.  
\item $f_t(b,\cdotp)=f(b,\cdotp)$ for all $b\in B\setminus B_1$ and $t\in I$.
\item The map $f_1(b,\cdotp):X\to Y$ is holomorphic for every $b$
in a neighbourhood of $B_0$.
\end{enumerate}
%
%
If in addition $B'$ is a $\Cscr^l$ submanifold of $\R^n$ 
for some $l\in\N$ and $f\in \Cscr^{l,0}(B'\times X,Y)$, 
then for any $s\in\Z_+$ 
the homotopy $f_t$ can be chosen such that, in addition to the above, 
$f_t\in \Cscr^{l,s}(B\times X,Y)$ for all $t\in I$ and the 
approximation in (c) holds in $\Cscr^{l,s}(K)$.
\end{lemma}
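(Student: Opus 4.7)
The strategy is to produce a globally holomorphic map $F:\C^n\times X\to Y$ approximating $f$ on $K$ and then glue $F$ with $f$ via a cutoff in $B$ using the embedding-retraction technique from the proof of Lemma \ref{lem:graph}.

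First I apply Lemma \ref{lem:graph} to obtain an open neighborhood $V\subset\C^n\times X$ of $K$, a holomorphic map $\tilde f:V\to Y$ approximating $f$ in $\Cscr^{0,s}(K)$ to precision $\epsilon/3$, a Stein neighborhood $\Gamma\subset\C^n\times X\times Y$ of the graph of $f$ over $K$, a holomorphic embedding $\Theta:\Gamma\hookrightarrow\C^N$, and a holomorphic retraction $\rho:O\to\Theta(\Gamma)$ from an open set $O\subset\C^N$. Since $\C^n\times X$ is Stein, $K$ is $\Oscr(\C^n\times X)$-convex by Remark \ref{rem:HC}(b), and $Y$ is Oka, the Oka principle for maps from Stein manifolds to Oka manifolds (\cite[Theorem 5.4.4]{Forstneric2017E}) provides a holomorphic map $F:\C^n\times X\to Y$ approximating $\tilde f$ on $K$ in $\Cscr^{0,s}$ to precision $\epsilon/3$; its restriction to $B\times X$ is $X$-holomorphic throughout and close to $f$ on $K$.

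Next I choose a continuous cutoff $\chi:B\to[0,1]$ equal to $1$ on a compact neighborhood of $B_0$ inside the interior of $B_1$ and equal to $0$ outside $B_1$ (of class $\Cscr^l$ in the smooth case). After shrinking $U$ so that the graphs of $f$ and of $F$ over $U$ lie inside $\Gamma$ and so that all convex combinations of their $\Theta$-images remain in $O$, I define on $U$
\[
	f_t(b,x)=\tau\circ\Theta^{-1}\circ\rho\bigl((1-t\chi(b))\,\Theta(b,x,f(b,x))+t\chi(b)\,\Theta(b,x,F(b,x))\bigr),
\]
where $\tau:\C^n\times X\times Y\to Y$ is the projection. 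Because $\chi(b)$ is independent of $x$, the map $f_t(b,\cdotp)$ is $X$-holomorphic on $U_b$ for every $(b,t)$; the formula equals $f$ at $t=0$, reduces to $f$ whenever $\chi(b)=0$, gives $F(b,x)$ whenever $t\chi(b)=1$, and approximates $f$ on $K$ in $\Cscr^{0,s}$ to precision $\epsilon$.

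The main obstacle is extending $f_t$ from $U$ to all of $B\times X$ consistently with conditions (d) and (e). Outside $U$ condition (b) is vacuous, so only continuity is required. I set $f_t(b,\cdotp):=f(b,\cdotp)$ for $b\notin B_1$, and $f_1(b,\cdotp):=F(b,\cdotp)$ on all of $\{b\}\times X$ for $b$ in the neighborhood $\{\chi=1\}$ of $B_0$; both agree with the formula on $U_b$, since that formula reduces to $F(b,x)$ there. For intermediate $(b,t)$ and $x$ outside $U_b$ I build a continuous interpolation using a partition of unity on $B$ together with the ANR property of the complex manifold $Y$, noting that the prescribed data on the closed subset $(U\times I)\cup(B\times X\times\{0\})\cup((B\setminus B_1)\times X\times I)\cup(\{\chi=1\}\times X\times\{1\})$ of $B\times X\times I$ is continuous and compatible on overlaps, hence extends to a continuous map $B\times X\times I\to Y$. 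Finally, the smoothness conclusion in the $\Cscr^l$-submanifold case follows by taking $\chi$ of class $\Cscr^l$ and by applying the $\Cscr^{l,s}$-refinements of Lemma \ref{lem:graph} and Lemma \ref{lem:Ck}, the smoothness of the glueing formula being automatic since $\Theta$ and $\rho$ are holomorphic.
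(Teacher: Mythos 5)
Your proposal contains two genuine gaps, and both trace back to the same missed idea: the whole point of the hypothesis that $B''$ is a neighbourhood retract (which you never use) is to let one \emph{extend $f$ continuously to a complex, Stein neighbourhood of $B$}. First gap: you invoke \cite[Theorem 5.4.4]{Forstneric2017E} to produce a globally holomorphic $F:\C^n\times X\to Y$ approximating $\tilde f$ on $K$. That theorem deforms a \emph{globally defined continuous} map which is holomorphic near the compact $\Oscr$-convex set; it does not approximate a map that is merely defined near $K$. Indeed, if holomorphic maps $F_j:\C^n\times X\to Y$ converged to $\tilde f$ uniformly on $K$, then $\tilde f|_K$ would be homotopic to the restriction of a global continuous map, and nothing in your setup guarantees this: $f$ is only given on $B'\times X$. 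This is exactly why the paper's proof first replaces $f(b,x)$ by $f(\rho(b),x)$, using the retraction $\rho$ coming from the neighbourhood-retract hypothesis, to get a continuous map on $V\times X$ for a Stein neighbourhood $V\subset\C^n$ of $B$ (Stein because compact subsets of $\R^n$ are polynomially convex), then glues the local holomorphic approximant into this global map via $h_0=g_{\chi}$ before applying the Oka theorem on the Stein manifold $V\times X$.

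Second gap: your final step, extending the partially prescribed homotopy from the closed set $(\overline{U'}\times I)\cup(B\times X\times\{0\})\cup((B\setminus B_1)\times X\times I)\cup(\{\chi=1\}\times X\times\{1\})$ to all of $B\times X\times I$ "using a partition of unity and the ANR property of $Y$", does not work. The ANR property only yields extensions to a \emph{neighbourhood} of a closed set; global extension is obstructed in general, and here it would require in particular a homotopy on all of $X$ from $f_b$ to $F_b$, compatible with your explicit formula near $K_b$ and constant near $\partial B_1$, for every $b$ with $\chi(b)=1$ --- which you never establish. That homotopy is precisely the additional output of the Oka theorem that you discard by keeping only the end map $F$. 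The paper avoids the extension problem altogether: the Oka theorem applied to $h_0$ on $V\times X$ gives a homotopy $h_t$ (through maps holomorphic near $K$ and close to $f$ there) to a holomorphic $h_1$, this is concatenated with $g_{t\chi}$ to give a globally defined homotopy $f_t$ on $V\times X$ satisfying (a)--(c) and (e), and condition (d) is then obtained simply by reparametrizing, $f_{t}(b,\cdot)\rightsquigarrow f_{t\xi(b)}(b,\cdot)$, with a cut-off $\xi$ equal to $1$ near $B_0$ and $0$ off $B_1$. If you keep the homotopy from Lemma \ref{lem:graph}/Theorem 5.4.4 and use cut-offs in the homotopy parameter as above, your argument can be repaired, but as written the existence of $F$ and of the global homotopy are both unjustified.
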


\begin{proof}
We focus on the case $l=0,\ s=0$. It will be clear that the same proof 
gives the corresponding results in the general case by using the
corresponding versions of Lemmas \ref{lem:Ck} and \ref{lem:graph}.

By the assumption, there are a neighbourhood $V''\subset \C^n$ of $B''$
and a retraction $\rho:V''\to B''$ onto $B''$. The conditions
imply that $B'$ is a neighbourhood of $B$ in $B''$.
Since $\rho|_{B'}$ is the identity map, it follows that there is an open 
neighbourhood $V\subset\C^n$ of $B$ such that 
$V\subset V''$ and $\rho(V)\subset B'$. Replacing $f(b,x)$ by 
$f(\rho(b),x)$ extends $f$ to a continuous map $V\times X\to Y$, 
still denoted $f$. Since every compact subset $B$ of $\R^n$ 
is polynomially convex in $\C^n$ \cite[p.\ 3]{Stout2007}, 
$V$ may be chosen Stein. If $B'$ is a $\Cscr^l$ submanifold of $\R^n$
then the retraction $\rho$ as above always exists and can be chosen 
of class $\Cscr^l$.

We claim that there are an open neighbourhood $W\subset V\times X$
of $K$ in $\C^n\times X$
and a homotopy $g_t:W\to Y$ $(t\in I=[0,1])$ connecting $g_0=f$ 
to a holomorphic map $g_1:W\to Y$ such that  
%
%
\begin{equation}\label{eq:apponK}
	\sup_{(b,x)\in K} 
	\dist_Y(g_t(b,x),f(b,x))<\epsilon/2\ \ \text{holds for all $t\in I$}
\end{equation}
and the map $g_t(b,\cdotp):W_b\to Y$ is holomorphic 
for every $b\in B$ and $t\in I$. Note that Lemma \ref{lem:graph}
furnishes a homotopy $g_t$ with the desired properties on 
a neighbourhood of $K$ in $B\times X$; see \eqref{eq:map-h},
\eqref{eq:ht}, and \eqref{eq:gt}. In the present situation,
all maps in the construction of $g_t$ are defined on 
a neighbourhood of $K$ in $\C^n\times X$.
Hence, the same argument, using convex combinations as
in \eqref{eq:ht} and defining $g_t$ by \eqref{eq:gt},  
yields a desired homotopy on a neighbourhood 
$W\subset V\times X$ of $K$ in $\C^n\times X$.
%
%
Applying the same argument on a somewhat bigger
compact set $K'\subset U\cap (B\times K)$ with $\Oscr(X)$-convex fibres 
and containing $K$ in its relative interior, the Cauchy estimates 
show that \eqref{eq:apponK} can be upgraded to approximation
in the $\Cscr^{0,s}$ topology on $K$ for any given $s\in \Z_+$.
Furthermore, if $l>0$ then the same arguments give approximation
in the $\Cscr^{l,s}$ topology on $K$ (cf.\ Lemma \ref{lem:graph}).

Pick a smooth function $\chi:\C^n\times X\to [0,1]$ with support 
in $W$ such that $\chi=1$ on a smaller neighbourhood 
$W'\Subset W$ of $K$. With $g_t$ as above, 
consider the map $h_0:V\times X\to Y$ given by
\begin{equation}\label{eq:h}
	h_0(z,x) = g_{\chi(z,x)}(z,x)\quad \text{for $z\in V$ and $x\in X$}.
\end{equation}
For $(z,x)\in W'$ we have $\chi=1$ 
and hence $h_0|_{W'}=g_1|_{W'}$, which is a holomorphic map.
On $(V\times X)\setminus W$ we have $\chi=0$
and hence $h_0=g_0=f$. Furthermore, $h_0$ is homotopic to $f$
by the homotopy $I\ni t\mapsto g_{t\chi}$, and every map in
this homotopy has the same properties as $f$. 

Since $V$ is Stein, the set $K\subset V\times X$ is 
$\Oscr(\C^n\times X)$-convex (see Remark \ref{rem:HC}) 
and $Y$ is an Oka manifold, 
the main result of Oka theory (see \cite[Theorem 5.4.4]{Forstneric2017E}) 
furnishes a homotopy $h_t:V \times X \to Y$ $(t\in I)$ 
from $h_0$ to a holomorphic map $h_1:V \times X \to Y$ such that 
the homotopy $f_t:V\times X\to Y$ $(t\in I)$ given by 
\[
	f_t = \begin{cases} g_{2t\chi}, & 0\le t\le 1/2, \\
				       h_{2t-1},   & 1/2\le t\le 1
	        \end{cases}				
\]
satisfies conditions (a)--(c) in the lemma (where we take $s=0$ in (c)), 
and it satisfies condition (e) for all $b\in B$ since $f_1=h_1$. 
To obtain (d), choose a smooth function 
$\xi:\R^n\to [0,1]$ which equals $1$ on a neighbourhood of $B_0$ 
and vanishes on $B\setminus B_1$, and replace 
$f_t(b,\cdotp)$ by $f_{t\xi(b)}(b,\cdotp)$ for $b\in B$ and $t\in I$.

This proves the lemma for $l=s=0$. The same arguments apply
when $s>0$, and also for $l>0$ when $B$ is a $\Cscr^l$ 
submanifold of $\R^n$, noting that the approximation 
by holomorphic functions in $\Cscr^{l,s}(K)$ is furnished by 
Lemma \ref{lem:Ck} and the existence of a 
homotopy $\{g_t\}_{t\in I}$ with approximation in $\Cscr^{l,s}(K)$ 
is given by Lemma \ref{lem:graph}.
\end{proof}

%
%
\begin{proof}[Proof of Theorem \ref{th:Oka}]
We consider the case $l=k=0$ with approximation in the 
fine $\Cscr^0$-topology. The arguments in the general case are similar
by using the corresponding version of Lemma \ref{lem:main}.
As pointed out in the proof of Theorem \ref{th:Runge}, 
approximation in the fine $\Cscr^{(k+1,\alpha)}$-topology follows 
from the fine $\Cscr^0$ approximation on a somewhat bigger Runge set 
in view of the Cauchy estimates. 

Let $\Jscr=\{J_b\}_{b\in B}$ be a family of complex structures on $X$ as 
in the theorem. Recall (see Def.\ \ref{def:Runge})
that a closed subset $K\subset B\times X$ 
is Runge if the projection $\pi|_K:K\to B$ is proper 
and every fibre $K_b=\{x\in X:(b,x)\in K\}$ $(b\in B)$ is 
Runge in $X$. Thus, the set $K$ in the theorem is Runge. 

We first explain the proof in the case when the parameter space $B$
is compact. Let $K^0=K\subset B\times X$ 
and $f^0=f:B\times X\to Y$ be as in the theorem,
so $f^0$ is $\Jscr$-holomorphic on a neighbourhood of $K^0$.
Choose an increasing sequence of compact Runge
sets $K'_1\subset K'_2\subset\cdots \subset \bigcup_{j=1}^\infty K'_j=X$
such that every set is contained in the interior of the next one, 
and let $K^j=B\times K'_j \subset B\times X$ for $j=1,2,\ldots$. 
We choose $K'_1$ big enough such that $K^0\subset K^1$. 
Given a decreasing sequence $\epsilon_j>0$, 
we shall find a sequence of maps
$f^j:B\times X\to Y$ and homotopies $f^j_t:B\times X\to Y$
$(t\in I=[0,1])$ satisfying the following conditions for every $j=1,2,\ldots$.
\begin{enumerate}[\rm (i)] 
\item $f^j$ is $\Jscr$-holomorphic on a neighbourhood of $K^j$. 
(By the assumption, this also holds for $j=0$.) 
\item $f^j_0=f^{j-1}$ and $f^j_1=f^j$. 
\item $f^j_t$ is $\Jscr$-holomorphic on a neighbourhood of $K^{j-1}$
for all $t\in I$.
\item $\max_{K^{j-1}} \dist_Y (f^{j-1},f^j_t)<\epsilon_j$
for all $t\in I$. 
\item The homotopy $f^j_t(b,\cdotp)$ is fixed for all $b$
in a neighbourhood of $Q$ in $B$.
\end{enumerate}
Assuming that the sequence $\epsilon_j>0$ is chosen to converge to $0$
sufficiently fast, the limit map $F=\lim_{j\to\infty}f^j:B\times X\to Y$
exists and is $\Jscr$-holomorphic, it approximates $f$ as closely
as desired uniformly on $K$, and $F(b,\cdotp)=f(b,\cdotp)$
holds for all $b\in Q$. Furthermore,
the homotopies $f^j_t$ $(j\in\N,\ t\in I)$ can be assembled into a 
single homotopy $f_t:B\times X\to Y$ $(t\in I)$ from $f_0=f$ to $f_1=F$
such that $f_t$ is $\Jscr$-holomorphic on a neighbourhood 
of $K$, it approximates $f$ on $K$ for every $t\in I$, and it
is fixed for all $b\in Q$.

Every step in the induction is of the same kind, so it suffices 
to explain the initial step, that is, the construction of a homotopy
$f^1_t:B\times X\to Y$ $(t\in I)$ which is $\Jscr$-holomorphic 
on a neighbourhood of $K^0$, it approximates $f=f^0$ on $K^0$,
it is fixed for $b$ in a neighbourhood of the subset $Q\subset B$
(see condition (v) in the theorem),  
and such that the map $f^1_1=f^1$ is $\Jscr$-holomorphic on a 
neighbourhood of $K^1$. This is accomplished by a finite 
induction with respect to an increasing family of compact 
subsets of the parameter space $B$, which we now explain.

Recall that $K^0\subset K^1=B\times L$, where $L$ is a compact 
Runge set in $X$. If the subset $Q\subset B$ in condition (v) 
is nonempty, it has a compact neighbourhood $\wt Q\subset B$ 
such that the map $f^0_b$ is holomorphic on a neighbourhood 
of $L$ for every $b\in \wt Q$. Let $\wt K^0\subset B\times X$ 
denote the compact set with fibres
\begin{equation}\label{eq:wtK0}
	\wt K^0_b = \begin{cases} 
				L, & b\in \wt Q; \\
				K^0_b, & b\in B\setminus \wt Q.
			\end{cases} 
\end{equation}
Clearly, $K^0 \subset \wt K^0 \subset K^1$ and $\wt K^0$
is Runge in $B\times X$. If $Q=\varnothing$,
we take $\wt Q=\varnothing$ and hence $K^0 =\wt K^0$. 
Pick a smoothly bounded domain $\Omega\Subset X$ 
and domains $V, V'\subset X$ such that 
\begin{equation}\label{eq:incl1}
	L\subset V\subset V' \subset \Omega
\end{equation}
and the closure of each of these sets is contained in the interior 
of the next one. 
Fix a point $b_0\in B$. The conditions on $B$ 
imply that there is a neighbourhood $P''\subset B$ of $b_0$
which is an ENR (see Definition \ref{def:ENR}). 
We may therefore consider $P''$ as a 
neighbourhood retract in some $\R^n\subset\C^n$. 
By Corollary \ref{cor:Hamilton} there are a compact neighbourhood
$P'$ of $b_0$, contained in the interior of $P''$,  
and a continuous family of biholomorphic maps
$\Phi_b: (\Omega,J_b)\to (\Phi_b(\Omega),J_{b_0})$ 
$(b\in P')$ such that
\begin{equation}\label{eq:incl2}
	\Phi_b(V)\subset V' \subset \Phi_b(\Omega)
	\ \  \text{holds for every $b\in P'$.}
\end{equation}
Pick a compact neighbourhood $P\subset B$ of $b_0$ 
contained in the interior of $P'$. 
Let $K'\subset L'$ be compact subsets of $P\times X$ 
whose fibres over any point $b\in P$ are given by 
\[
	K'_b=\Phi_b(\wt K^0_b),\qquad L'_b=\Phi_b(L). 
\]
By \eqref{eq:wtK0}--\eqref{eq:incl2} we have that 
\[
	K'_b\subset L'_b\subset \Phi_b(V)\subset V'
	\quad \text{for all $b\in P$}.
\]
Consider the family of maps 
\[
	f'_b=  f_b\circ \Phi_b^{-1}: \Phi_b(\Omega) \to Y,\quad b\in P.
\]
Since $f_b$ is $J_b$-holomorphic on a neighbourhood
of $\wt K^0_b$ and the map $\Phi_b: (\Omega,J_b)\to (\Phi_b(\Omega),J_{b_0})$ 
is biholomorphic, $f'_b$ is $J_{b_0}$-holomorphic on a 
neighbourhood of $K'_b$ for every $b\in P$.
Pick a pair of smaller compact 
neighbourhoods $P_0\subset P_1\subset P$ of 
$b_0$, each of them contained in the interior of the next one.
Lemma \ref{lem:main}, applied with $X$ replaced by $V'\subset X$,
furnishes a homotopy of maps 
\[
	f'_{t,b}: V' \to Y\quad \text{for $b\in P$ and $t\in I$} 
\]
satisfying conditions (a)--(e) in the lemma 
with the sets $B_0\subset B_1\subset B$ replaced by 
$P_0\subset P_1\subset P$. In particular, 
$f'_{t,b}=f'_{0,b}=f'_b$ holds for $b\in P\setminus P_1$, 
the map $f'_{t,b}$ approximates $f'_b$ on $K'_b$ for $b\in P$,
and $f'_{1,b}$ is $J_{b_0}$-holomorphic on $V'$ for 
$b$ in a neighbourhood of $P_0$. 
By \eqref{eq:incl2} we have $\Phi_b(V)\subset V'$. Hence, 
\begin{equation}\label{eq:ftb}
	f_{t,b}:=f'_{t,b} \circ \Phi_b: V \to Y\quad 
	\text{for $b\in P$ and $t\in I$} 
\end{equation}
is a homotopy of maps which are $J_b$-holomorphic on 
a neighbourhood of $\wt K^0_b$, 
they approximate $f_b$ uniformly on $\wt K^0_b$,
they agree with $f_{0,b}=f_b$ for $b\in P\setminus P_1$
(so we can extend the family to all $b\in B$),
and the map $f^1_b:=f_{1,b}:V \to Y$ is 
$J_b$-holomorphic for all $b$ in a neighbourhood of $P_0$.
By using a cut-off function in the parameter of the homotopy,
we can extend the maps $f_{t,b}$ to $X$ without 
changing their values on a neighbourhood of $L$
(compare with \eqref{eq:h}).

If the sets $Q\subset \wt Q$ are nonempty, we make
another modification to the above homotopy in order to 
ensure condition (v) in the theorem. Choose a function 
$\chi:B\to [0,1]$ such that $\chi=1$ on $B\setminus \wt Q$
and $\chi=0$ on a neighbourhood of $Q$.
With $f_{t,b}$ as in \eqref{eq:ftb} we set
\[
	\tilde f_{t,b}=f_{t\chi(b),b}
	\quad \text{for $b\in B$ and $t\in I$}. 
\]	
For $b\in B$ in a neighbourhood of $Q$ we then have 
$\tilde f_{t,b}=f_{0,b}=f_b$ as desired, and the 
other required properties still hold.

What was just explained serves as a step in a finite induction 
which we now describe. 

The assumptions imply that there is a finite family of triples
$P_0^j\subset P_1^j \subset P^j$ $(j=1,2,\ldots,m)$ of compact 
sets in $B$ such that 
$\bigcup_{j=1}^m P_0^j=B$ and the above construction
can be performed on each of these triples with the same sets
in \eqref{eq:incl1}. The induction proceeds as follows. 

In the first step, we perform the procedure explained above on 
the first triple $(P^1_0,P^1_1,P^1)$ with the set $K^0$
and the map $g^0:=f^0=f$. 
The resulting map $g^1:B\times X \to Y$ is $\Jscr$-holomorphic 
on a neighbourhood of the compact set 
\begin{equation}\label{eq:S1}
	S^1 := \big[(P^1_0\times X)\cap K^1\big]
	\cup  K^0 
	\subset B\times X,
\end{equation}
and $g^1_b=f^0_b$ holds for all $b$ in a neighbourhood of $Q$.
Note that the fibre $S^1_b$ of $S^1$ 
over any point $b\in B$ is Runge in $X$. 
Indeed, we have $S^1_b=L$ for $b\in P^1_0$ and 
$S^1_b=K^0_b$ for $b\in B\setminus P^1_0$. Since $K^0_b\subset L$ 
for every $b\in B$, the set $S^1$ is compact and Runge in $B\times X$,
and we clearly have that $K^0\subset S^1\subset K^1=B\times L$.
Furthermore, we obtain a homotopy from $f^0=g^0$ to $g^1$ 
such that every map in the homotopy is $\Jscr$-holomorphic on
a neighbourhood of $K^0$ and it approximates $f^0$ there,
and the homotopy is fixed for $b$ in a neighbourhood of
$(B\setminus P^1_1)\cup Q$.

In the second step, the same argument is applied to the map $g^1$ 
on the triple $(P^2_0,P^2_1,P^2)$ 
but with $K^0$ replaced by the set $S^1$ in \eqref{eq:S1}. 
The resulting map $g^2: B\times X\to Y$ is $\Jscr$-holomorphic on a 
neighbourhood of the compact Runge set
\begin{equation}\label{eq:S2}
	S^2 = \big[((P^1_0\cup P^2_0)\times X)\cap K^1\big]
	\cup K^0 
	\subset B\times X.
\end{equation}
Note that $S^2_b=L$ for $b\in P^1_0\cup P^2_0$ and 
$S^2_b=S^1_b=K^0_b$ for $b\in B\setminus (P^1_0\cup P^2_0)$. 
We also obtain a homotopy from $g^1$ to $g^2$ consisting of
maps which are $\Jscr$-holomorphic on a neighbourhood 
of $S^1$, they approximate $g^1$ there, and the homotopy
is fixed for $b$ in a neighbourhood of $(B\setminus P^2_1)\cup Q$.

Proceeding inductively, we obtain after $m$ steps a map 
$g^m:B\times X\to Y$ which is $\Jscr$-holomorphic on a neighbourhood
of $S^m=K^1=B\times L$. We define $f^1:=g^m$.
Furthermore, the individual homotopies between the subsequent maps 
$g^j$ and $g^{j+1}$ for $j=0,1,\ldots,m-1$ can be assembled 
into a homotopy $f^1_t$ $(t\in I)$ from $f^1_0 = f^0=g^0$ 
to $f^1_1=f^1=g^m$ such that $f^1_t$ is $\Jscr$-holomorphic
on a neighbourhood of $K^0$ for all $t\in I$ and the homotopy
is fixed for $b\in B$ in a neighbourhood of $Q$. 
This completes the proof of the theorem if the parameter space 
$B$ is compact. 

In the general case when $B$ is only $\sigma$-compact 
we choose a normal exhaustion 
$B_1\subset B_2\subset \cdots\subset \bigcup_{j=1}^\infty B_j=B$
by compact sets (i.e., such that every set $B_j$ is contained in 
the interior of the next one) and a normal exhaustion 
$L_1\subset L_2 \subset \cdots\subset \bigcup_{j=1}^\infty L_j=X$
by compact Runge subsets of $X$ such that 
\[
	(B_{j}\times X)\cap K^0 \subset B_{j}\times L_j
	\quad\text{holds for all $j=1,2,\ldots$}.
\] 
Define the increasing sequence of subsets 
$K=K^0\subset K^1\subset \cdots 
\subset \bigcup_{j=0}^\infty K^j=B\times X$ by 
\[
	K^j = (B_j\times L_j) \cup K^0,  
	\quad j=1,2,\ldots.
\]
Note that every $K^j$ is a closed Runge set in $B\times X$.
Applying the special case proved above gives a sequence
of maps $f^j:B\times X\to Y$ $(j=0,1,\ldots)$ with $f^0=f$
such that for every $j=1,2,\ldots$ the map $f^j$ is $\Jscr$-holomorphic 
on a neighbourhood of $K^j$, it approximates $f^{j-1}$ in the
fine topology on $K^{j-1}$, it is homotopic to $f^{j-1}$
by a homotopy of maps which are $\Jscr$-holomorphic on a
neighbourhood of $K^{j-1}$ and approximate $f^{j-1}$ on $K^{j-1}$,
and the homotopy is fixed for $b\in Q$. 
Assuming that the approximation is close enough at every step,
we obtain a limit map $F=\lim_{j\to\infty} f^j:B\times X\to Y$ 
which is $\Jscr$-holomorphic, it approximates the initial map $f$ 
as closely as desired in the fine topology on $K=K^0$, 
it agrees with $f$ on $Q\times X$, and it is homotopic to $f$ 
by maps having the same properties. 
\end{proof}

%
%
The following result generalises Theorem \ref{th:Oka}.
The proof is essentially the same and is omitted.

\begin{theorem} \label{th:Okabis}
Let $B$, $X$, $\{J_b\}_{b\in B}$, $K\subset B\times X$, and $Y$ be 
as in Theorem \ref{th:Oka}. Assume that $Z$ is a
Stein manifold and $L$ is a compact $\Oscr(Z)$-convex set in $Z$.
For every $b\in B$ let $\wt J_b$ be the almost 
complex structure on $X\times Z$
which equals $J_b$ on $TX$ and equals the given 
almost complex structure on $TZ$. 
Assume that $f:B\times X \times Z \to Y$ is a continuous map, 
and there is an open set $U\subset B\times X\times Z$ containing 
$K\times L$ such that $f_b=f(b,\cdotp,\cdotp):X\times Z\to Y$ is 
$\wt J_b$-holomorphic on $U_b=\{(x,z) \in X\times Z:(b,x,z)\in U\}$ 
for every $b\in B$.
Given a continuous function $\epsilon:B\to (0,+\infty)$,  
there is a homotopy $f_t:B\times X\times Z\to Y$ $(t\in I)$ 
satisfying the following. 
\begin{enumerate}[\rm (i)]
\item $f_0=f$.
\item The map $f_{t,b}=f_t(b,\cdotp,\cdotp):X\times Z\to Y$ is 
$\wt J_b$-holomorphic near $K_b\times L$ for every $b\in B$.
\item $\sup_{(x,z) \in K_b\times L}\dist_Y(f_{t,b}(x,z),f_b(x,z))<\epsilon(b)$
for every $b\in B$ and $t\in I$.
\item The map $F=f_1$ is such that 
$F_b=F(b,\cdotp,\cdotp):X\times Z\to Y$ is 
$\wt J_b$-holomorphic for every $b\in B$.
\end{enumerate}
If in addition $0\le l\le k+1$, $B$ is a $\Cscr^l$ manifold if $l>0$, 
the family $\{J_b\}_{b\in B}$ is of class 
$\Cscr^{l,(k,\alpha)}(B\times X)$ $(0<\alpha<1)$ and $f|_U$ is of class 
$\Cscr^{l,0}$, then $f|_U$ is of class $\Cscr^{l,(k+1,\alpha)}$ 
and the homotopy $\{f_t\}_{t\in I}$ can be chosen such that 
it is of class $\Cscr^{l,(k+1,\alpha)}$ on a neighbourhood of 
$K\times L$, it approximates $f$ in the fine 
$\Cscr^{l,(k+1,\alpha)}$-topology on $K\times L$, 
and $F=f_1$ is class $\Cscr^{l,(k+1,\alpha)}$ on $B\times X\times Z$. 
\end{theorem}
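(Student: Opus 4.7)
The plan is to reproduce the proof architecture of Theorem \ref{th:Oka} virtually verbatim, with the single observation that the fixed Stein factor $Z$ enters the scheme only as a passive companion. First, I would note that the product almost complex structure $\wt J_b$ on $X \times Z$ is integrable (since $J_b$ and the structure on $Z$ are), and that with respect to $\wt J_b$ the surface $X_b \times Z$ is a Stein manifold as soon as $(X,J_b)$ is, since any open Riemann surface is Stein and products of Stein manifolds are Stein. Moreover, for each compact $\Oscr(X,J_b)$-convex set $K_b \subset X$, the product $K_b \times L$ is $\Oscr(X \times Z, \wt J_b)$-convex, so the fibre-hypothesis of Theorem \ref{th:Oka} is preserved when the base surface is replaced by $X \times Z$.

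Next, I would carry out the local-in-$B$ reduction exactly as in the proof of Theorem \ref{th:Oka}. Fix $b_0 \in B$ with an ENR neighbourhood $P'' \subset B \hookrightarrow \R^n$, choose a smoothly bounded $\Omega \Subset X$ containing a Runge neighbourhood of the relevant fibre in $X$, and invoke Theorem \ref{th:Beltrami2} to produce a continuous family $\Phi_b \colon (\Omega, J_b) \to (\Phi_b(\Omega), J_{b_0})$ of biholomorphisms depending continuously (respectively $\Cscr^l$) on $b$ in a compact neighbourhood of $b_0$. The critical observation is that the fibre-preserving map
\[
\wt\Phi_b := \Phi_b \times \Id_Z \colon (\Omega \times Z, \wt J_b) \longrightarrow (\Phi_b(\Omega) \times Z, \wt J_{b_0})
\]
is biholomorphic for the product structures, so precomposition converts the moving-complex-structure problem on $X \times Z$ into a moving-compact-set problem in the fixed Stein manifold $(X,J_{b_0}) \times Z$, precisely the setting in which Lemmas \ref{lem:Ck}, \ref{lem:graph}, and \ref{lem:main} apply.

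With this reduction in place, I would then apply the manifold-valued parametric approximation machinery: Poletsky's theorem still delivers a Stein neighbourhood of the graph of $f_{b_0} \circ \wt\Phi_{b_0}^{-1}$ in $(X \times Z \times Y, J_{b_0} \oplus J_Y)$; a Docquier--Grauert holomorphic retraction from a neighbourhood in $\C^N$ straightens the target; Lemma \ref{lem:Ck}, whose proof uses only Stein-ness of the source and holomorphic convexity of the fibres, produces the parametric approximation of the lifted $\C^N$-valued map; and the projection back to $Y$ via the retraction, combined with the Oka property of $Y$ on the Stein manifold $\C^n \times X \times Z$ (as in Lemma \ref{lem:main}), gives the homotopy on a fixed neighbourhood of the base parameter. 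Assembling these local homotopies through a partition of unity in $B$, together with the finite induction over a cover of $B$ by triples $(P^j_0, P^j_1, P^j)$ and the exhaustion of $X$ by an increasing family of compact Runge sets (with $Z$ and $L$ fixed throughout), yields the desired global $X$-holomorphic map after passing to the limit; conditions (i)--(iv) are preserved by the same arguments as in Theorem \ref{th:Oka}.

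The potentially delicate point, and the only one not entirely mechanical, is that the parameter space in Lemmas \ref{lem:Ck} and \ref{lem:main} played the role of $B$, whereas here the role of the ``source Stein manifold'' is taken by $X \times Z$ with $Z$ carrying a genuine several-variable holomorphic structure. One must therefore verify that the Henkin--Ramirez/Cauchy-type representation used in the proof of Lemma \ref{lem:Ck} still approximates in $\Cscr^{l,s}$ on a variable family of $\Oscr$-convex compact sets $K \times L \subset (\C^n \times X \times Z)$; this is immediate because products of strongly pseudoconvex domains containing $K_b$ and $L$ are strongly pseudoconvex and admit the required kernels, and the fibrewise $\Cscr^l$-dependence in $b$ of the integrand is not affected by the presence of $Z$. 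For the final smooth-parametric statement, the $\Cscr^{l,k+1}$ regularity of $f$ on $U$ follows from Lemma \ref{lem:Xholomorphic} applied with target $Y$ (whose proof carries over to manifold-valued maps via a local holomorphic embedding of $Y$), and the fine $\Cscr^{l,k+1}$-approximation on $K \times L$ is inherited from the $\Cscr^{l,s}$-approximation built into Lemmas \ref{lem:graph} and \ref{lem:main} by taking $s = k+1$.
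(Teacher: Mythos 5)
Your overall route is indeed the intended one: since Lemmas \ref{lem:Ck}, \ref{lem:graph} and \ref{lem:main} are formulated for an arbitrary Stein source, the fibre-preserving biholomorphisms $\Phi_b\times\Id_Z$ reduce the problem to a moving family of $\Oscr$-convex compacts in the fixed Stein manifold $(X,J_{b_0})\times Z$, and the local-to-global scheme of Theorem \ref{th:Oka} carries over. There is, however, one genuine gap: you keep ``$Z$ and $L$ fixed throughout'' the exhaustion. Conclusion (iv) demands that $F_b$ be $\wt J_b$-holomorphic on all of $X\times Z$, and, more to the point, convergence of the inductive sequence requires uniform control on compact sets exhausting $X\times Z$. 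In your scheme the approximation provided by Lemma \ref{lem:main} (c) at the $j$-th stage takes place only on sets of the form (compact in $X$) $\times L$; away from a neighbourhood of $B\times X\times L$ the successive modifications are uncontrolled, since Lemma \ref{lem:main} keeps the map fixed only for parameter values $b\notin B_1$, not for source points off the fibres $K_b$. Hence the sequence has no reason to converge on $X\times(Z\setminus L)$, and even if you force holomorphy near $K'_j\times Z$ at each finite stage via condition (e), the limit map need not exist there. The repair is routine but must be made: exhaust $Z$ by compact $\Oscr(Z)$-convex sets $L=L_1\subset L_2\subset\cdots$ and run the induction over the products $K'_j\times L_j$, which are fibrewise $\Oscr$-convex in the Stein product, approximating at each stage on the previous product set; requirement (iii) on $K\times L$ then follows, as in Theorem \ref{th:Oka}, from a summable choice of the $\epsilon_j$.

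Two smaller points. For $Y$-valued maps you cannot assemble the local solutions ``through a partition of unity in $B$''; the correct device, which you also invoke, is the finite induction over the triples $(P^j_0,P^j_1,P^j)$ with cut-off functions in the homotopy parameter, exactly as in the proof of Theorem \ref{th:Oka} — partitions of unity are available only in the scalar-valued Runge and Mergelyan theorems. Also, the justification in your ``delicate point'' paragraph is off: a product of strongly pseudoconvex domains is not strongly pseudoconvex. No such verification is needed, though, since Lemma \ref{lem:Ck} already holds for an arbitrary Stein source; one simply takes a smoothly bounded strongly pseudoconvex sublevel set of a strictly plurisubharmonic exhaustion of $V'\times Z$ containing the compact $\Oscr$-convex set in question.
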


By using the techniques in the proof of Theorem \ref{th:Oka},
we can also extend Mergelyan approximation for functions 
in Theorem  \ref{th:Mergelyan} to manifold-valued maps
as in the following theorem. A similar result in 
the nonparametric case is 
\cite[Corollary 5, p.\ 176]{FornaessForstnericWold2020}.

%
%
\begin{theorem} \label{th:Mergelyan-manifold}
Assume that $X$ is a smooth open surface, 
$B$ is as in Theorem \ref{th:Oka},
$\Jscr=\{J_b\}_{b\in B}$ is a family of complex structures on $X$
of class $\Cscr^{\alpha}$ $(0<\alpha<1)$, 
$K\subset X$ is a compact Runge set, 
$A\subset \mathring K$ is a finite set, 
$Y$ is a complex manifold,  
and $f: B\times K \to Y$ is a continuous map which is
$\Jscr$-holomorphic on $B\times \mathring K$.
Given a continuous function $\epsilon:B\to (0,+\infty)$, 
there are a neighbourhood $U\subset B\times X$ 
of $B\times K$ and a continuous $\Jscr$-holomorphic map $F:U\to Y$ 
such that for every $b\in B$ 
we have $\sup_{x\in K}\dist_Y(F_b(x),f_b(x))<\epsilon(b)$ and 
$F_b$ agrees with $f_b$ to order $1$ in every point $a\in A$.
\end{theorem}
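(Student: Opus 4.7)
The plan is to reduce Theorem \ref{th:Mergelyan-manifold} to the scalar Mergelyan Theorem \ref{th:Mergelyan} via the embedded-graph technique already used in Lemma \ref{lem:graph} and the proof of Theorem \ref{th:Oka}, working locally in the parameter and gluing by the same cut-off scheme in a linearizing chart. First I would localize in $b$: fix $b_0\in B$, pick a compact neighbourhood $B_0\subset B$ of $b_0$ embedded as a neighbourhood retract in some $\R^n\subset \C^n$, and use Theorem \ref{th:Beltrami2} to obtain a continuous family of $(J_b,J_{b_0})$-biholomorphisms $\Phi_b:\Omega\to\Phi_b(\Omega)\subset X$ on a smoothly bounded Runge neighbourhood $\Omega\supset K$ with $\Phi_b$ fixing the points of $A$ and $\Phi_{b_0}=\Id_\Omega$. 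Setting $\tilde K_b=\Phi_b(K)$ (still compact Runge in $(X,J_{b_0})$ by Remark \ref{rem:homotopic}) and $\tilde f(b,x)=f(b,\Phi_b^{-1}(x))$ on $\tilde K=\bigcup_{b\in B_0}\{b\}\times\tilde K_b$, the problem is transferred to a moving family of Runge compacta in the fixed Riemann surface $(X,J_{b_0})$ with a continuous family of maps which are $J_{b_0}$-holomorphic in the interior.

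Next I would produce a Stein neighbourhood of the total graph. By the nonparametric Mergelyan theorem for manifold-valued maps (\cite[Corollary~5]{FornaessForstnericWold2020}), each $\tilde f_b$ lies in $\Oscrc(\tilde K_b,Y)$; hence by Poletsky's theorem (Theorem \ref{th:Poletsky3.1}) the graph of $\tilde f_b$ is a Stein compact in $X\times Y$ for every $b$. Using Siu's theorem on Stein neighbourhoods together with an upper-semicontinuity/compactness argument in the spirit of the proof of Lemma \ref{lem:graph}, one produces a single open Stein set $\Gamma\subset\C^n\times X\times Y$ containing the compact total graph $G=\{(b,x,\tilde f(b,x)):(b,x)\in\tilde K\}$. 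I would then choose a holomorphic embedding $\Theta:\Gamma\hookrightarrow\C^N$ with a holomorphic retraction $\rho:O\to\Theta(\Gamma)$ defined on a neighbourhood $O\subset\C^N$.

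The third step is scalar Mergelyan plus retraction. The $\C^N$-valued map $h(b,x)=\Theta(b,x,\tilde f(b,x))$ is continuous on $\tilde K$ with $h(b,\cdotp)$ being $J_{b_0}$-holomorphic in the interior of $\tilde K_b$. Applying Theorem \ref{th:Mergelyan} componentwise (with interpolation at the fixed finite set $A\subset \mathring K=\mathring{\tilde K_b}$, which is stationary by the choice of $\Phi_b$), one obtains a continuous $\C^N$-valued approximation $\tilde h$ on a neighbourhood $W\subset\C^n\times X$ of $\tilde K$ with $\tilde h(b,\cdotp)$ $J_{b_0}$-holomorphic on a neighbourhood of $\tilde K_b$. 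If the approximation is close enough, $\tilde h(W)\subset O$, and the projection $\tilde F:=\tau\circ\Theta^{-1}\circ\rho\circ\tilde h:W\to Y$ is a $Y$-valued approximation of $\tilde f$ with each $\tilde F_b$ $J_{b_0}$-holomorphic near $\tilde K_b$, matching the prescribed $0$-jets at $A$. Pulling back via $\Phi_b$ yields the desired local approximation $F(b,x)=\tilde F(b,\Phi_b(x))$ of $f$ on a neighbourhood of $B_0\times K$.

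Finally I would globalize. The parameter space $B$ is paracompact, so a locally finite cover by such ENR neighbourhoods exists; the local approximations are merged by the same finite-induction-with-cutoff argument used in the proof of Theorem \ref{th:Oka}, replacing convex combinations of $\C^N$-valued intermediate maps (as in \eqref{eq:ht}--\eqref{eq:gt}) and then applying the retraction $\rho$ to stay in $\Theta(\Gamma)$, which lets us transport the gluing into $Y$. I expect the main obstacle to be the production of a uniform Stein neighbourhood $\Gamma$ of the full parametric graph $G$ over $B_0$: individually each $\tilde f_b$ gives a fibrewise Stein graph via Poletsky, but upgrading this to a joint Stein neighbourhood in $\C^n\times X\times Y$ uniform over the compact parameter set requires a careful continuity argument of the type appearing in Lemma \ref{lem:graph}, adapted to the $\Ascr$-in-the-fibre (rather than $\Oscr$-in-a-fibre-neighbourhood) setting. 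Once this is in place, the remaining steps are routine modifications of arguments already developed in Sections \ref{sec:Runge}--\ref{sec:Oka}.
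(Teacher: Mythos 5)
Your overall strategy is the one the paper uses: localize in the parameter, realize the graph of the map as a Stein compact via Poletsky's theorem (Theorem \ref{th:Poletsky3.1}), take a Stein neighbourhood with a holomorphic embedding $\Theta$ and retraction $\rho$, reduce to the scalar Mergelyan Theorem \ref{th:Mergelyan}, and glue the local-in-$b$ solutions by homotopies with cut-offs in the homotopy parameter as in Lemma \ref{lem:graph} and the proof of Theorem \ref{th:Oka}. However, the step you yourself flag as the main obstacle is where your write-up has a genuine gap, and it is a self-inflicted one. You do not need a Stein neighbourhood of the \emph{total} parametric graph $G\subset\C^n\times X\times Y$ over $B_0$; moreover, obtaining it the way you suggest would require showing that the total map $\tilde f$ belongs to $\Oscrcl(\tilde K,Y)$, i.e.\ that it admits \emph{joint} holomorphic approximations in $(b,x)$ near each point of a set whose fibre interiors move with $b$ -- but such joint approximability on a fibred set with only $\Ascr$-regularity in the fibres is essentially the local form of the parametric Mergelyan statement you are trying to prove, so the argument as left risks circularity. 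The paper avoids this entirely: Poletsky is applied \emph{fibrewise}, only to the graph of $f_{b_0}$ over $K$ in $(X,J_{b_0})\times Y$ (the hypothesis $f_{b_0}\in\Oscrcl(K,Y)$ is checked by covering $K$ with small discs whose images lie in coordinate charts of $Y$ and invoking scalar Mergelyan, or your citation of \cite[Corollary 5]{FornaessForstnericWold2020}); Siu's theorem then gives a single Stein neighbourhood $\Gamma\subset X\times Y$, and by uniform continuity the graphs of the transferred maps for all $b$ in a small neighbourhood of $b_0$ lie in the same $\Gamma$, so one $\Theta$ and one $\rho$ serve for the whole local step.

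A second, smaller point: Theorem \ref{th:Mergelyan} is stated for a \emph{fixed} compact $K$ and a variable family $\{J_b\}$, not for a moving family of compacta $\tilde K_b=\Phi_b(K)$ in the fixed structure $J_{b_0}$, so after your transfer you cannot quote it verbatim. Either note that the moving-compacta case is exactly what its proof produces, or (cleaner) do not transfer the domain at all: for $b$ near $b_0$ consider the $\C^N$-valued map $(b,x)\mapsto \Theta\bigl(\Phi_b(x),f_b(x)\bigr)$ on $B_0\times K$, which is continuous and $J_b$-holomorphic on $\mathring K$ because $\Phi_b$ is $(J_b,J_{b_0})$-biholomorphic and $\Theta$ is holomorphic; Theorem \ref{th:Mergelyan} then applies componentwise on the fixed $K$, with the order-$1$ interpolation at $A$ preserved since $\Phi_b$ fixes the points of $A$, and composing the approximant with $\tau\circ\Theta^{-1}\circ\rho$ returns a $J_b$-holomorphic map into $Y$ with the required approximation and interpolation. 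With these two corrections your argument coincides with the paper's proof; the final gluing step you describe is the same as the paper's.
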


\begin{proof}
Since the open set $X\setminus K$ has no relatively compact
connected components, there are arbitrarily small open coordinate 
discs $U_1,\ldots, U_N\subset X$ and compact 
discs $D_j\subset U_j$ for $j=1,\ldots,N$ such that
$K\subset \bigcup_{j=1}^N \mathring D_j$ and 
$U_j\setminus (K\cap D_j)$ is connected for every $j$.
Fix $b_0\in B$. We may assume that the discs $U_j$ are chosen small 
enough so that $f_{b_0}(K\cap D_j) \subset Y$
is contained in a coordinate chart of $Y$ for each $j$. 
Hence, by Theorem \ref{th:Mergelyan} we can approximate $f_{b_0}$ 
uniformly on $K\cap D_j$ by holomorphic maps 
from open neighbourhoods of $K\cap D_j$ to $Y$ for $j=1,\ldots,N$.
This shows that the hypotheses of Theorem \ref{th:Poletsky3.1}
hold, so the graph of $f_{b_0}$ on $K$ is a Stein compact in $X\times Y$. 
By the argument in the proof of Lemma \ref{lem:graph}
we reduce the Mergelyan approximation problem for maps $f_b:K\to Y$,
with $b\in B$ close enough to $b_0$, to the scalar-valued case
furnished by Theorem \ref{th:Mergelyan}. The local 
$J_b$-holomorphic approximants of $f_b$ can be glued together
by finding homotopies as in the proof of Lemma \ref{lem:graph}
(see \eqref{eq:ht} and \eqref{eq:gt}) and using 
cut-off functions in the parameter of the homotopy.
The inductive procedure is similar to the one in 
the proof of Theorem \ref{th:Oka} and will not be repeated.
\end{proof}

Before stating our next result, we recall the following notion;
see \cite[p.\ 69]{AlarconForstnericLopez2021}.

%
%
\begin{definition} \label{def:admissible}
Let $X$ be a smooth surface. An {\em admissible set} 
in $X$ is a compact set of the form $S=K\cup E$, where $K$ is a 
(possibly empty) finite union of pairwise disjoint compact domains with
piecewise smooth boundaries in $X$ and $E = S \setminus\mathring  K$ 
is a union of finitely many pairwise disjoint smooth Jordan arcs and closed 
smooth Jordan curves meeting $K$ only at their endpoints (if at all) 
such that their intersections with the boundary $bK$ of $K$ 
are transverse.
\end{definition}

Admissible sets arise in handlebody decompositions of surfaces; see 
\cite[Sect.\ 1.4]{AlarconForstnericLopez2021}. For this reason, 
approximation on such sets plays a major role in 
constructions of directed holomorphic maps, minimal surfaces  
and related objects, as is evident from the results in
\cite{AlarconForstnericLopez2021}. The basic case  
for continuous functions follows from Theorem \ref{th:Mergelyan}. 
In Section \ref{sec:directed} we shall also use the following version
of the Mergelyan theorem on admissible sets in families
of open Riemann surfaces. 

%
%
\begin{theorem}
\label{th:Mergelyan-admissible}
Assume that $X$ is a smooth open surface, $1\le l\le k+1$ are integers,
$B$ is a manifold of class $\Cscr^l$,
$\Jscr=\{J_b\}_{b\in B}$ is a family of complex structures on 
$X$ of class $\Cscr^{l,(k,\alpha)}(B\times X)$ for some $0<\alpha<1$,  
$S=K\cup E$ is a Runge admissible set in $X$, $U\subset X$ is an open 
set containing $K$, and $f: B\times (U\cup E) \to\C$ is a function of 
class $\Cscr^{l}$ which is $\Jscr$-holomorphic on $B\times U$.
Then, $f$ can be approximated in the fine $\Cscr^l$ topology on 
$B\times S$ by $\Jscr$-holomorphic functions $F:B\times X\to \C$ 
of class $\Cscr^{l,(k+1,\alpha)}$.
%
The analogous result holds for maps to any complex manifold $Y$, 
where the approximating maps $F$ are defined on small open 
neighbourhoods of $B\times S$ in $B\times X$. If $Y$ is an Oka manifold
then there are maps $F:B\times X\to Y$ satisfying the same conclusion.
\end{theorem}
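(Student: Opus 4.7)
The plan is to reduce the admissible-set Mergelyan problem to the Runge-type Theorem \ref{th:Runge} via a $\dibar$-correction constructed on a full open neighbourhood of $S$. Locally in the parameter, I would first convert the variable complex structure into a fixed one using Theorem \ref{th:Beltrami2}, then perform a polynomial jet extension across the arcs of $E$, correct the resulting small $\dibar$-error with the Cauchy--Green operator from Theorem \ref{th:PSproperties}, and finally invoke Theorem \ref{th:Runge} together with a $\Cscr^l$ partition of unity on $B$ to globalize.

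Localization and straightening. Since the target is $\C$, a $\Cscr^l$ partition of unity on $B$ as in \eqref{eq:partition} reduces the statement to constructing, near any fixed $b_0\in B$ and on a compact neighbourhood $B_0\subset B$ of $b_0$, an $X$-holomorphic approximant of the required regularity. Pick a smoothly bounded relatively compact Runge domain $\Omega\Subset X$ with $S\subset \Omega$. After shrinking $B_0$, Theorem \ref{th:Beltrami2} furnishes a $\Cscr^{l,(k+1,\alpha)}$ family of $(J_b,J_{b_0})$-biholomorphisms $\Phi_b\colon (\Omega,J_b)\to(\Phi_b(\Omega),J_{b_0})$ with $\Phi_{b_0}=\Id_\Omega$. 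Set $J_0:=J_{b_0}$, $\tilde U_b:=\Phi_b(U)$, $\tilde E_b:=\Phi_b(E)$, $\tilde S_b:=\Phi_b(S)$, and $\tilde f_b:=f_b\circ\Phi_b^{-1}$. The problem becomes: approximate $\tilde f_b$ in the $\Cscr^l$ topology on the moving admissible sets $\tilde S_b$ by $J_0$-holomorphic functions on a fixed open neighbourhood $W\supset \bigcup_{b\in B_0}\tilde S_b$, with $\Cscr^l$ joint regularity in $b$.

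Extension by jets. The analytical heart of the argument is to extend $\tilde f_b$ from $\tilde U_b\cup \tilde E_b$ to a $\Cscr^l$ family of functions $\hat f_b$ on $W$ such that $\dibar_{J_0}\hat f_b$ has arbitrarily small $\Cscr^l$-norm on $B_0\times W$ and is supported in a thin tube around $\tilde E_b$. I would work locally in $J_0$-holomorphic coordinates $\zeta=u+\imath v$ along each arc component of $\tilde E_b$, chosen so that the arc becomes a $\Cscr^l$-smooth curve; by a Whitney-type extension, I would define $\hat f_b$ as a polynomial in $\bar\zeta$ whose coefficients are determined recursively from $\tilde f_b$ restricted to the arc and its iterated normal derivatives, so that $\dibar_{J_0}\hat f_b$ vanishes to prescribed order along the arc. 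Cutting off and gluing the local pieces with $\tilde f_b|_{\tilde U_b}$ via a fixed $\Cscr^\infty$ partition of unity on $W$ produces $\hat f_b$ with the required properties; joint $\Cscr^l$-regularity in $(b,x)$ is preserved throughout because $\Phi_b$, and hence the arcs $\tilde E_b$ and the restrictions of $\tilde f_b$ to them, depend $\Cscr^l$-smoothly on $b$. Now solve $\dibar_{J_0} u_b=\dibar_{J_0}\hat f_b$ on $W$ using the Cauchy--Green operator $P_W$ from Theorem \ref{th:PSproperties}. The $\Cscr^{(k,\alpha)}\to\Cscr^{(k+1,\alpha)}$ bound on $P_W$, combined with the smallness of $\dibar_{J_0}\hat f_b$ in $\Cscr^l$-norm, gives $u\in\Cscr^{l,k+1}(B_0\times W)$ with arbitrarily small $\Cscr^l$-norm on $S$. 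Setting $g_b:=\hat f_b-u_b$ yields a $J_0$-holomorphic function on $W$, and $G_b:=g_b\circ\Phi_b$ is a $\Cscr^l$-family of $J_b$-holomorphic functions on a fixed neighbourhood of $S$ in $X$, $\Cscr^l$-close to $f_b$ on $S$. Applying Theorem \ref{th:Runge} to $G$ on $B_0\times X$ yields a $\Cscr^{l,k+1}$-smooth $X$-holomorphic function $F_{B_0}\colon B_0\times X\to\C$ arbitrarily $\Cscr^l$-close to $f$ on $B_0\times S$, and a $\Cscr^l$ partition of unity on $B$ assembles the local $F_{B_0}$ into the desired global $F$.

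Manifold-valued and Oka targets; main obstacle. For a complex manifold target $Y$, I would follow the approach of Theorem \ref{th:Mergelyan-manifold}: by Theorem \ref{th:Poletsky3.1} the graph of $f_{b_0}$ over $S$ is a Stein compact in $X\times Y$, and embedding a Stein neighbourhood of this graph into some $\C^N$ together with a holomorphic retraction reduces the approximation to the scalar construction applied componentwise, yielding a $J_b$-holomorphic approximant of $f$ on a neighbourhood of $B\times S$. When $Y$ is Oka, one combines the neighbourhood approximation with Theorem \ref{th:Oka} (applied with the approximating family playing the role of the initially given $X$-holomorphic datum) to obtain a global approximating family $F\colon B\times X\to Y$. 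The delicate step is the extension construction: because the arcs $\tilde E_b$ move only $\Cscr^l$-smoothly with $b$ and the family $\{J_b\}$ is only $\Cscr^{l,(k,\alpha)}$ with $l\le k+1$, the jet extension and the dependence of $P_W$ on $b$ must be carefully tracked using Lemma \ref{lem:composition} and Remark \ref{rem:CGoperators} so that the final $\dibar$-correction, and the composition with $\Phi_b$, retain full $\Cscr^{l,k+1}$ regularity on $B_0\times W$.
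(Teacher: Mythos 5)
Your overall scaffolding (localize in $b$, straighten the structures with Theorem \ref{th:Beltrami2}, conclude with Theorem \ref{th:Runge} and a $\Cscr^l$ partition of unity; Stein-neighbourhood/retraction for manifold targets and Theorem \ref{th:Oka} for Oka targets) agrees with the paper, but the central analytic step is not established by your jet-extension-plus-$\dibar$ argument, and this is a genuine gap. You claim to build an extension $\hat f_b$ of the $\Cscr^l$ data across the arcs of $\wt E_b$ so that $\dibar_{J_0}\hat f_b$ is ``arbitrarily small in $\Cscr^l$-norm'' and supported in a thin tube. This cannot be achieved: since $f$ is only of class $\Cscr^l$, any Whitney/almost-holomorphic extension from the (one-dimensional) arcs has $\dibar_{J_0}\hat f_b$ vanishing on $\wt E_b$ at most to order $l-1$, so it is small only in norms of order $\le l-1$ after restricting to a tube of radius $\delta$, while the $l$-th order derivatives are merely bounded; moreover the cut-offs used to glue with $\tilde f_b|_{\wt U_b}$ near the attachment points cost a factor $\delta^{-1}$ per derivative. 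Consequently the correction $u_b=P_W(\dibar_{J_0}\hat f_b)$ is not small in $\Cscr^l$ on $S$, and the asserted regularity $u\in\Cscr^{l,k+1}$ via the $\Cscr^{(k,\alpha)}\to\Cscr^{(k+1,\alpha)}$ bound of Theorem \ref{th:PSproperties} does not apply, because the $\dibar$-data is only of class $\Cscr^{l-1}$ in the space variable, not $\Cscr^{(k,\alpha)}$. In effect you are re-deriving, with a loss of derivatives, the nontrivial theorem on $\Cscr^l$-approximation on totally real submanifolds (Range--Siu type), which is precisely the delicate point; note that Remark \ref{rem:Mergelyan-admissible} flags this step as the one where stronger statements break down.

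The paper avoids this by a different device at exactly this point: after straightening, it embeds the compact parameter piece $B_0$ as a $\Cscr^l$ submanifold of $\R^n\subset\C^n$ and regards $\wt S=\wt K\cup\wt E$ as an admissible set in the Stein manifold $\C^n\times X$, with $\wt K$ holomorphically convex (Remark \ref{rem:HC}) and $\wt E$ a totally real $\Cscr^l$ submanifold; a holomorphic approximant of $\tilde f$ near $\wt K$ is produced by Lemma \ref{lem:Ck}, glued smoothly with $\tilde f$ along $\wt E$, and then approximated in $\Cscr^l(\wt S)$ by entire functions using \cite[Theorem 20]{FornaessForstnericWold2020} (going back to \cite{RangeSiu1974}), which handles both the totally real piece, its transverse attachment to $\wt K$, and the joint $\Cscr^l$ dependence on the parameter at once. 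To repair your argument you would either have to invoke such a totally-real approximation theorem (parametrically, e.g.\ by the same embedding trick) in place of your extension-and-correction step, or supply a genuinely new proof of it with the correct $\Cscr^l$ estimates, which your sketch does not do.
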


\begin{proof}
It suffices to prove the result locally in the parameter.
Thus, fix a point $b_0\in B$, a smoothly bounded 
domain $\Omega\Subset X$ containing $S$,
and a compact neighbourhood $B_0\subset B$ of $b_0$ 
for which Corollary \ref{cor:Hamilton} applies and gives
a family of $(J_b,J_{b_0})$-biholomorphic maps 
$\Phi_b:\Omega\to\Phi_b(\Omega)\subset X$ $(b\in B_0)$ 
of class $\Cscr^{l,(k+1,\alpha)}$.
We may assume that $B_0$ is a $\Cscr^l$ submanifold 
of $\R^n\subset\C^n$ for some $n\in\N$. 
Recall that the function $f_b$ in the theorem is $J_b$-holomorphic
on a neighbourhood $U\subset X$ of $K$ for every $b\in B_0$.
Since $B_0$ is compact, we may choose $U$ independent of $b\in B_0$. 

As in the proof of Theorem \ref{th:Oka},
this reduces the approximation problem for $b\in B_0$ to the
situation in Lemma \ref{lem:main} where the compact sets 
$\wt K, \wt E, \wt S$ in $B_0\times X\subset\C^n\times X$ have fibres
$K_b=\Phi_b(K)$, $E_b=\Phi_b(E)$, and $S_b=\Phi_b(S)=K_b\cup E_b$,
respectively. Note that $\wt K$ and $\wt S$ are holomorphically convex
in $\C^n\times X$ (see Remark \ref{rem:HC}).
Let $\wt U\subset B_0\times X$ be the set with fibres 
$U_b = \Phi_b(U)$. The function $\tilde f_b:U_b\cup E_b \to\C$, 
defined by $\tilde f_b\circ\Phi_b = f_b$ on $U\cup E$
$(b\in B_0)$, is $J_{b_0}$-holomorphic on $U_b$ for every $b\in B_0$. 
Let $\tilde f:\wt U\cup \wt E \to\C$ be given by 
$\tilde f(b,\cdotp)=\tilde f_b$ for $b\in B_0$. 
Note that $\tilde f$ is of class $\Cscr^{l}$.
Choose a compact $\Oscr(\C^n\times X)$-convex 
set $L\subset \wt U$ containing $\wt K$ in its relative interior.
By Lemma \ref{lem:Ck} 
we can approximate $\tilde f$ in $\Cscr^{l}(L)$
by a function $h:V\to\C$ on an open 
neighbourhood $V\subset \C^n\times X$ of $L$ 
which is holomorphic with respect to the standard
complex structure on $\C^n$ and the complex structure $J_{b_0}$
on $X$. By smoothly gluing $h$ with $\tilde f: \wt E \to\C$
on the set $L\setminus \wt K$, 
we may assume that $h$ is unchanged (and hence holomorphic) 
on a neighbourhood $\wt V\subset V$ of $\wt K$ in $\C^n\times X$, 
it is of class $\Cscr^{l}$ on $\wt E$,
it agrees with $\tilde f$ on $\wt E\setminus L$, 
and it approximates $\tilde f$ to a desired precision in $\Cscr^l(\wt E)$. 
Note that $\wt E$ is a totally real submanifold of class $\Cscr^l$
in $\C^n\times X$, and the set $\wt S=\wt K\cup \wt E$ is 
admissible in the sense of 
\cite[Definition 5 (a), p.\ 156]{FornaessForstnericWold2020}.
Hence, by \cite[Theorem 20, p.\ 161]{FornaessForstnericWold2020}
we can approximate $h$ in $\Cscr^l(\wt S)$ by holomorphic 
functions on $\C^n\times X$. By the argument in the proof of
Lemma \ref{lem:Ck} this gives functions $F$, defined and
$\Jscr$-holomorphic on open 
neighbourhoods of $B_0\times S$ in $B_0\times X$,
which approximate $f$ in $\Cscr^l(B_0\times S)$.
The proof is completed by using $\Cscr^l$ 
partitions of unity on $B$ and Theorem \ref{th:Runge}.
For maps to manifolds, we follow the argument in the proof 
Lemma \ref{lem:main}, and the statement for maps to 
an Oka manifold $Y$ follows from Theorem \ref{th:Oka}. 
\end{proof}

%
%
%
%
\section{Trivialisation of canonical bundles 
of families of open Riemann surfaces}
\label{sec:trivialisation}

Every open Riemann surface $X$ has trivial 
holomorphic cotangent bundle $K_X=T^*X$, 
trivialised by a nowhere vanishing holomorphic $1$-form $\theta$
on $X$. (In fact, every holomorphic vector bundle on 
an open Riemann surface is holomorphically trivial by the 
Oka--Grauert principle;
see Oka \cite{Oka1939}, Grauert \cite{Grauert1958MA},
and \cite[Theorem 5.3.1]{Forstneric2017E}.) 
We prove the following generalisation to families of complex structures.
See also Corollary \ref{cor:GunningNarasimhan}, which extends 
the theorem of Gunning and Narasimhan 
\cite{GunningNarasimhan1967}. 

%
%
\begin{theorem}\label{th:thetab}
Given a smooth open surface $X$ and a family $\Jscr=\{J_b\}_{b\in B}$
of complex structures of class $\Cscr^{l,(k,\alpha)}$ 
on $X$ as in Theorem \ref{th:Oka} (with $l\le k+1$),
there exists a family $\{\theta_b\}_{b\in B}$ of nowhere 
vanishing holomorphic $1$-forms on $(X,J_b)$
of class $\Cscr^{l,(k,\alpha)}(B\times X)$.
\end{theorem}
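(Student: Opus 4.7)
The strategy is an exhaustion argument in the space variable, reducing each extension step to an application of Theorem \ref{th:Oka} with target the Oka manifold $\C\setminus\{0\}$.

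First I would establish a local-in-$b$ version of the result. Fix $b_0\in B$. By the classical Gunning--Narasimhan theorem (equivalently, the Oka--Grauert principle for line bundles on the open Riemann surface $(X,J_{b_0})$) there is a nowhere vanishing $J_{b_0}$-holomorphic $1$-form $\omega_0$ on $X$. On any smoothly bounded relatively compact domain $\Omega\Subset X$, Theorem \ref{th:Beltrami2} produces a compact neighborhood $P_0\subset B$ of $b_0$ and a $\Cscr^{l,k+1}$ family of biholomorphisms $\Phi_b\colon (\Omega,J_b)\to(\Phi_b(\Omega),J_{b_0})$ with $\Phi_{b_0}=\Id_\Omega$. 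The pull-backs $\omega_b^{b_0}:=\Phi_b^*\omega_0$ then form a $\Cscr^{l,k+1}$ family of nowhere vanishing $J_b$-holomorphic $1$-forms on $\Omega$ for $b\in P_0$.

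Next I would exhaust $X$ by compact Runge sets $L_1\Subset L_2\Subset\cdots$ with $\bigcup_n L_n=X$ and, along a rapidly decreasing sequence $\epsilon_n\to 0$, inductively build $\Cscr^{l,k+1}$ families $\theta_b^{(n)}$ of nowhere vanishing $J_b$-holomorphic $1$-forms on open neighborhoods $U_n\supset L_n$ such that $\theta_b^{(n+1)}$ approximates $\theta_b^{(n)}$ in the fine $\Cscr^{l,k+1}$ topology on $L_n$ to within $\epsilon_n$. The limit $\theta_b:=\lim_n\theta_b^{(n)}$ is then the desired family. To pass from step $n$ to step $n+1$, fix $b_0\in B$, produce $\omega_b^{b_0}$ on a neighborhood $V$ of $L_{n+1}$ as above for $b\in P_0$, and observe that the ratio $h_b^{b_0}:=\theta_b^{(n)}/\omega_b^{b_0}$ is a $\Cscr^{l,k+1}$ family of $J_b$-holomorphic maps $U_n\to\C\setminus\{0\}$. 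Applying Theorem \ref{th:Oka} with target $\C\setminus\{0\}$ and the Runge compact $K=P_0\times L_n$ yields an extension $\tilde h_b^{b_0}\colon V\to\C\setminus\{0\}$ approximating $h_b^{b_0}$ on $L_n$; then $\tilde h_b^{b_0}\cdot\omega_b^{b_0}$ is a local-in-$b$ extension of $\theta_b^{(n)}$ to $V$.

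The main obstacle is coherent globalization in the parameter: the local extensions constructed over different base points $b_0$ do not match on overlaps, and the usual device of a $\Cscr^l$ partition of unity cannot be used for $1$-forms since a convex combination of nowhere vanishing $(1,0)$-forms may vanish. I would handle this by weaving the parameter patching into the same finite-induction scheme used in the proof of Theorem \ref{th:Oka}. Namely, one performs a double induction: an outer induction over the exhaustion $\{L_n\}$ of $X$ and, for each $n$, an inner finite induction over a cover of $B$ by triples $P_0^j\subset P_1^j\subset P^j$ of compact parameter neighborhoods. At each inner step the family $\theta_b$ is already defined on the union of the previously treated compacta in $B\times X$, and is extended onto the next one using Theorem \ref{th:Oka} with target $\C\setminus\{0\}$ applied to the ratio $\theta_b/\omega_b^{b_0}$, invoking condition (v) of Theorem \ref{th:Oka} to keep the family fixed where it is already satisfactory. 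All patching thus occurs multiplicatively through $\C\setminus\{0\}$-valued transitions, which preserves the nowhere vanishing property of the $(1,0)$-forms. The bookkeeping is technical but introduces no new analytic content beyond the proof of Theorem \ref{th:Oka}.
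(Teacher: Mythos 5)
Your plan is, in its essentials, the same as the paper's: localize in the parameter via Theorem \ref{th:Beltrami2}, use Gunning--Narasimhan/Oka--Grauert to get a nowhere vanishing holomorphic reference form in the frozen structure $J_{b_0}$, pass to the $\C\setminus\{0\}$-valued ratio, apply Theorem \ref{th:Oka} with the Oka target $\C\setminus\{0\}$, and patch multiplicatively (never by convex combinations) by weaving the parameter patching into the finite induction of the proof of Theorem \ref{th:Oka}. This is exactly the paper's mechanism.

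There is, however, one concrete point you gloss over, and it is precisely where the paper makes its one distinctive move. Theorem \ref{th:Oka} requires as input a map that is \emph{globally} continuous on $B\times X$ (holomorphic only near the Runge set $K$), whereas your ratio $h_b^{b_0}=\theta_b^{(n)}/\omega_b^{b_0}$ is defined only on a neighbourhood of the current compact set. Before you can invoke Theorem \ref{th:Oka} you must extend it to a continuous $\C\setminus\{0\}$-valued map on the larger domain, jointly continuously in $(b,x)$. The obstruction lives in $H^1$ and vanishes because the sets are Runge, but this parametrized extension is a genuine step and you do not supply it. The paper sidesteps it entirely by first fixing a \emph{global smooth} family of nowhere vanishing $(1,0)$-forms: since $TX$ is trivial, one takes a nonvanishing vector field $v$, sets $V_b=v-\imath J_b v$, and lets $\theta_b$ be the dual family with $\theta_b(V_b)=1$; then the multiplier $f_b=\Phi_b^*(g\theta_{b_0})/\theta_b$ is automatically a globally defined continuous $\C\setminus\{0\}$-valued map, and only its holomorphicity locus needs to be enlarged. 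Adopting this reference family (or proving the extension lemma) closes the gap. A second, smaller inaccuracy: condition (v) of Theorem \ref{th:Oka} cannot literally be used to freeze the family over previously treated parameter patches, since there the forms are holomorphic only near $L_{n+1}$, not on the whole fibre; the correct device, used in the paper, is the cut-off in the homotopy time parameter together with enlarging the fibres of the Runge set over those patches (the set $\wt K^0$ in the proof of Theorem \ref{th:Oka}), which yields approximation, not exact fixing, there.
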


Note that a family of holomorphic 
$1$-forms $\{\theta_b\}_{b\in B}$ as in the theorem, which 
is of class $\Cscr^l$ in $b\in B$, is necessarily of class 
$\Cscr^{l,(k,\alpha)}(B\times X)$ by Lemma \ref{lem:Xholomorphic}.
Theorem \ref{th:thetab} is used in Section \ref{sec:directed}.

\begin{proof}
Write $X_b=(X,J_b)$ for $b\in B$. 
We see as in the proof of Corollary \ref{cor:Hamilton} that there is a family 
$\{\theta_b\}_{b\in B}$ of nowhere vanishing $(1,0)$-forms $\theta_b$ 
on $X_b$ of class $\Cscr^{l,(k,\alpha)}$. 
We will deform it to a family of nowhere vanishing 
$J_b$-holomorphic 1-forms $\{\tilde \theta_b\}_{b\in B}$
of class $\Cscr^l$ in the parameter $b\in B$.

Note that $\theta=\{\theta_b\}_{b\in B}$ is a section of the
complex line bundle $E\to B\times X$ whose restriction to
the fibre $X_b$ over $b\in B$ equals $T^*X_b$, the complex
cotangent bundle of $X_b$. We shall inductively deform $\theta$ 
so as to make it $\Jscr$-holomorphic
on larger and larger subsets of $B\times X$. 
We follow the scheme in the proof of Theorem \ref{th:Oka}. 
Assuming that $K\subset L$ are 
compact Runge sets in $X$ and $\theta$ is  
$\Jscr$-holomorphic on a neighbourhood of $B\times K$,
we shall find a multiplier $f:B\times X\to \C^*$
which is homotopic to the constant function $1$  
and is $\Jscr$-holomorphic and close to $1$ on a neighbourhood of 
$B\times K$ (thereby ensuring that $f\theta$ is 
close to $\theta$ on $B\times K$), such that 
$f\theta$ is $\Jscr$-holomorphic on a neighbourhood of $B\times L$.
We will then conclude the proof by an induction on a normal
exhaustion of $X$ by an increasing family of compact Runge sets.

It remains to explain the basic case described above.
As in the proof of Theorem \ref{th:Oka}, we proceed by induction
with respect to a normal exhaustion of $B$ by compact subsets.
For the inductive step, assume that $L$ is a compact Runge set in $X$ and  
$K^0\subset B\times L$ is a closed Runge subset (see
Definition \ref{def:Runge}). 
We allow for the possibility that some fibres are empty. 
Assume that $\theta$ as above is $\Jscr$-holomorphic on a 
neighbourhood $U\subset B\times X$ of $K^0$.
Pick a smoothly bounded domain $\Omega\Subset X$ 
with $L\subset \Omega$. Fix a point $b_0\in B$. 
Corollary \ref{cor:Hamilton} furnishes a compact 
neighbourhood $P\subset B$ of $b_0$ 
and a family of biholomorphic maps
$\Phi_b: (\Omega,J_b)\to (\Phi_b(\Omega),J_{b_0})$ $(b\in P)$
of class $\Cscr^{l,k+1}$.
By the Oka--Grauert principle there is a function $g:X\to\C^*$,
homotopic to the constant $X\to 1$ through functions
$X\to \C^*$, such that the 1-form $g\theta_{b_0}$ is 
$J_{b_0}$-holomorphic on $X$. Hence,
\[
	\phi_b:=\Phi_b^*(g\theta_{b_0}) = f_b\theta_b,\quad b\in P
\]
is a family of nowhere vanishing $J_b$-holomorphic 1-forms on $\Omega$.
Since $g\theta_{b_0}$ is independent of $b\in P$, 
the family $\{\phi_b\}_{b\in P}$ is of class $\Cscr^{l}(P)$. 
Hence, the same holds for the family of functions
$f_b=\phi_b/\theta_b:\Omega\to \C^*$. 
By shrinking $U\supset K^0$ if necessary 
we may assume that $U\subset B\times \Omega$.
Since $\theta_b$ is $J_b$-holomorphic on
$U_b\supset K^0_b$ for every $b\in P$, 
the function $f_b=\phi_b/\theta_b$
is also $J_b$-holomorphic on $U_b$ for every $b\in P$.
Theorem \ref{th:Oka} applied with the Oka manifold $Y=\C^*$  
furnishes a homotopy of functions
$
	f_{t,b}: \Omega \to \C^* \ (b\in P,\ t\in I) 
$
of class $\Cscr^{l,k}$ satisfying the following conditions:
\begin{enumerate}[\rm (i)]
\item $f_{0,b}=f_b$ for all $b\in P$,
\item $f_{1,b}$ is $J_{b}$-holomorphic on $\Omega$ for 
all $b\in P$ and of class $\Cscr^l$ in $b$, and 
\item $f_{t,b}$ is $J_{b}$-holomorphic on a neighbourhood of $K^0_b$
and it approximates $f_b$ on $K_b$ as closely as desired 
for all $b\in P$ and $t\in I$. (In fact, the approximation is 
in the $\Cscr^{l,k}$ topology.) 
\end{enumerate}
The homotopy of $1$-forms $\theta'_{t,b}= \phi_b/f_{t,b}$ $(b\in P,\ t\in I)$
on $\Omega$ is of class $\Cscr^{l,k}$ and satisfies 
\begin{enumerate}[\rm (i')]
\item $\theta'_{0,b}= \phi_b/f_{0,b}=\phi_b/f_{b}=\theta_b$ for all $b\in P$,
\item $\theta'_{1,b}= \phi_b/f_{1,b}$ is $J_b$-holomorphic
on $\Omega$ for every $b\in P$, and
\item $\theta'_{t,b}$ is $J_b$-holomorphic on a 
neighbourhood of $K^0_b$ and it approximates $\theta_b$ on $K^0_b$ 
for all $b\in P$. (The approximation is in the $\Cscr^{l,k}$ topology.) 
\end{enumerate}
Pick a pair of neighbourhoods $P_0\subset P_1\subset P$ of $b_0$, 
each contained in the interior of the next one, 
and a function $\xi:B\to [0,1]$ of class $\Cscr^l$ which equals $1$ on
a neighbourhood of $P_0$ and vanishes on $B\setminus P_1$.
We define a new homotopy of $1$-forms on $\Omega$ of class
$\Cscr^{l,k}(B\times \Omega)$ by 
\[
	\theta_{t,b}=\theta'_{t\xi(b),b}\ \ 
	\text{for every $b\in B$ and $t\in I$.}
\]
Then, $\theta_{t,b}=\theta'_{t,b}$ holds for $b$ in
a neighbourhood of $P_0$ (where $\xi=1$), and 
$\theta_{t,b}=\theta_{0,b}=\theta_b$ holds for 
all $b\in B\setminus P_1$ (where $\xi=0$) and $t\in I$. It follows that 
\begin{enumerate}[\rm (i'')]
\item $\theta_{0,b}=\theta'_{0,b}=\theta_b$ for all $b\in B$,
\item $\theta_{t,b}$ is $J_b$-holomorphic on a neighbourhood of 
$K^0_b$ and it approximates $\theta_b$ on $K^0_b$ 
for all $b\in B$ and $t\in I$ 
(the approximation is in the fine $\Cscr^{l,k}$ topology), and 
\item $\theta_{1,b}=\theta'_{1,b}$ is $J_b$-holomorphic on 
$\Omega$ for all $b\in P_0$.
\end{enumerate}
By using another cut-off function in the parameter of the homotopy,
we can extend $\theta_{t,b}$ for $p\in P$ to all of $X_b$ without 
changing its values on a neighbourhood of 
$(\mathring P\times L)\cup ((B\setminus \mathring P)\times X)$.

Using this device inductively with respect
to an exhaustion of $B$ as in the proof of Theorem \ref{th:Oka}, 
we can approximate $\theta$ in the fine $\Cscr^{l,k}$ topology on $K^0$ 
by a family of nowhere vanishing $1$-forms $\{\tilde \theta_b\}_{b\in X}$ 
of class $\Cscr^{l,k}(B\times X)$ which are $\Jscr$-holomorphic 
on a neighbourhood of $B\times L$. Theorem \ref{th:thetab} then 
follows by an obvious induction with respect to a normal exhaustion 
of $X$ by compact Runge sets.
\end{proof}

%
%
%
\section{Families of directed holomorphic immersions and of 
conformal minimal immersions}
\label{sec:directed}

In this section, we illustrate how the results of this paper 
can be used to construct families of directed holomorphic immersions
and of conformal minimal immersions 
from a family of open Riemann surfaces as in Theorem \ref{th:Oka}.
There are many further problems of this kind which may possibly or even 
likely be treated by these new methods, and we indicate a few of them 
in Problem \ref{prob:problems}. 

A connected compact complex submanifold $Y$ of the complex 
projective space $\CP^{n-1}$, $n\in\N$, determines the punctured 
complex cone 
\begin{equation}\label{eq:A}
	A= \{(z_1,\ldots,z_n)\in \C^n_* : [z_1:\cdots:z_n]\in Y\}.
\end{equation}
Note that $A$ is smooth and connected, and its closure 
$\overline A=A\cup\{0\} \subset \C^n$ is an algebraic subvariety 
of $\C^n$ by Chow's theorem \cite{Chow1949}.
By \cite[Theorem 5.6.5]{Forstneric2017E}, $A$ 
is an Oka manifold if and only if $Y$ is an Oka manifold.
By \cite[Lemma 3.5.1]{AlarconForstnericLopez2021},
the convex hull of $A$ 
is the smallest complex subspace of $\C^n$ containing $A$, 
and we shall assume without loss of generality that this hull is all of $\C^n$.

Let $X$ be a connected open Riemann surface and $\theta$ be a 
nowhere vanishing holomorphic 1-form on $X$.
A holomorphic immersion $h:X \to\C^n$ is said to be {\em directed by} $A$, 
or an {\em $A$-immersion}, if its complex derivative with respect to any 
local holomorphic coordinate on $X$ takes its values in $A$. 
Clearly, this holds if and only if the holomorphic map
$f=dh/\theta:X\to\C^n$ assumes values in $A$. 
Conversely, a holomorphic map $f:X\to A$ satisfying
the period vanishing conditions
\begin{equation}\label{eq:periodvanishing}
	\int_C f\theta =0\quad \text{for all closed curves $C\subset X$}
\end{equation}
integrates to a holomorphic $A$-immersion $h:X\to\C^n$ by setting
\[
	h(x)= v+\int_{x_0}^x f\theta,\quad x\in X
\]
for any $x_0\in X$ and $v\in\C^n$.
Since $f\theta$ is a holomorphic 1-form, it suffices to verify 
conditions \eqref{eq:periodvanishing} 
on a basis of the homology group $H_1(X,\Z)\cong \Z^r$, 
a free abelian group of some rank $r\in \Z_+ \cup\{\infty\}$.

Note that a map directed by the cone $A=\C^n_*$ is simply an immersion.
Another case of major interest is the {\em null quadric} 
\begin{equation}\label{eq:nullq}
	\boldA =\bigl\{(z_1,\ldots,z_n)\in \C^n_* =\C^n\setminus \{0\}: 
	z_1^2+z_2^2+\cdots + z_n^2=0\bigr\},\quad n\ge 3.
\end{equation}
Holomorphic immersions directed by $\boldA$ are called
{\em holomorphic null curves} in $\C^n$. 
The real and the imaginary part of a holomorphic null immersion 
$X\to\C^n$ are conformal harmonic immersions $X\to\R^n$. 
Such immersions parameterize minimal surfaces, hence
are called conformal minimal immersions.
Conversely, every conformal minimal immersion $X\to \R^n$ 
is locally (on any simply connected
domain) the real part of a holomorphic null curve. 
See \cite{AlarconForstnericLopez2021,Osserman1986} 
for more information.

Effective methods for construction directed holomorphic immersions 
were developed by Alarc\'on and Forstneri\v c 
in \cite{AlarconForstneric2014IM}. Assuming 
that $A$ is an Oka manifold, they proved an Oka principle with Runge 
and Mergelyan approximation for holomorphic $A$-immersions
\cite[Theorems 2.6 and 7.2]{AlarconForstneric2014IM}.  
They also showed that every holomorphic $A$-immersion can be
approximated by holomorphic $A$-embeddings when $n\ge 3$,
and by proper holomorphic $A$-embeddings under an additional  
assumption on the cone \cite[Theorem 8.1]{AlarconForstneric2014IM}.
Alarc\'on and Castro-Infantes \cite{AlarconCastro-Infantes2019APDE} 
added interpolation to the picture.  
A parametric Oka principle for $A$-immersions 
was proved in \cite[Theorem 5.3]{ForstnericLarusson2019CAG}. 
Algebraic $A$-immersions from affine Riemann surfaces 
are studied in \cite{AlarconLarusson2025Crelle} under the assumption 
that $A$ is algebraically elliptic in the sense of Gromov 
\cite{Gromov1989} (see also \cite[Definition 5.6.13]{Forstneric2017E}). 
Several cones arising in geometric applications, 
in particular the null quadric $\boldA$ \eqref{eq:nullq}, 
are algebraically elliptic. Recently, Alarc\'on et al.\ 
\cite{AlarconLarusson2025Crelle,AlarconForstnericLarusson2024}
obtained h-principles for algebraic immersions directed by cones 
which are flexible in the sense of Arzhantsev et al.\ 
\cite{ArzhantsevFlennerKalimanKutzschebauchZaidenberg2013}.
Minor variations of these results for the null cone \eqref{eq:nullq}
yield similar results for conformal minimal immersions 
of open Riemann surfaces in Euclidean spaces; 
see the monograph \cite{AlarconForstnericLopez2021}.

The main advantage of the techniques in the mentioned papers, 
when compared to the previously known results, is that they 
allow a complete control of the conformal structure of the 
resulting directed curves or minimal surfaces. 
By using the approximation results developed in the present paper,
one can go substantially further and construct families of such objects
with a control of the conformal structure of every member of
the family, which may depend continuously or smoothly on 
a parameter. We now present a few specific results in this direction,
which are only the tip of an iceberg of possibilities.

In the following, $X$ is a connected, smooth, open oriented surface, 
$\Jscr=\{J_b\}_{b\in B}$ is a family of complex structures on $X$
as in Theorem \ref{th:Oka}, and $\{\theta_b\}_{b\in B}$
is a family of nowhere vanishing $J_b$-holomorphic 1-forms
on $X$ furnished by Theorem \ref{th:thetab}.
Recall that a map $f:B\times X\to Y$ is said to be 
$\Jscr$-holomorphic if 
$f(b,\cdotp):X\to Y$ is $J_b$-holomorphic for every $b\in B$. 
The first two items in the following definition 
come from \cite[Definition 2.2]{AlarconForstneric2014IM}
and apply to any open Riemann surface $X$.

%
%
\begin{definition}\label{def:nondegenerate}
Let $A\subset \C^n_*$ be a smooth connected 
punctured complex cone of the form \eqref{eq:A}.
\begin{enumerate}[\rm (i)] 
\item 
A holomorphic map $f:X\to A$ is nondegenerate if 
the tangent spaces $T_{f(x)} A \subset T_{f(x)}\C^n \cong \C^n$ 
over all points $x\in X$ span $\C^n$. 
\item 
A holomorphic $A$-immersion $h:X\to\C^n$ is 
nondegenerate if the map $f=dh/\theta: X\to A$ is such,
where $\theta$ is any nowhere vanishing holomorphic 1-form on $X$.
\item
A $\Jscr$-holomorphic map $f:B\times X\to A$ is 
nondegenerate if $f_b=f(b,\cdotp):X\to A$ is nondegenerate 
for every $b\in B$.
\item
A map $h:B\times X\to\C^n$ is an $A$-immersion if 
$h_b=h(b,\cdotp):X\to\C^n$ is a $J_b$-holomorphic $A$-immersion 
for every $b\in B$, and is nondegenerate if $dh_b/\theta_b:X\to A$ 
is such for every $b\in B$.
\item
Let $S=K\cup E\subset X$ be an admissible set 
(see Definition \ref{def:admissible}).
A map $h:B\times S \to\C^n$ of class $\Cscr^{l,k}$ $(l\ge 0, k\ge 1)$ 
is a {\em generalized $A$-immersion} if for every $b\in B$
the map $h_b=h(b,\cdotp):S \to\C^n$ is an immersion 
whose derivative assumes values in $A$, and $h_b$ is 
$J_b$-holomorphic on $\mathring S=\mathring K$. 
Such an $h$ is nondegenerate if for every $b\in B$, 
$h_b$ is nondegenerate on every connected component 
of $K$ and of $E$. 
(See \cite[Definition 3.1.2]{AlarconForstnericLopez2021}.) 
\end{enumerate}
\end{definition}

Assuming that $A$ is not contained in any hyperplane of 
$\C^n$, we show in Lemma \ref{lem:nondegenerate} that for any $r\in\N$ 
the set of $r$-tuples of points in $A$ at which the tangent spaces 
to $A$ fail to span $\C^n$ is a closed algebraic 
subvariety of $A^r$, the Cartesian product of $r$ copies of $A$. 
From this and the identity principle it follows that 
a holomorphic map $X\to A$ from a connected open Riemann surface 
$X$ is nondegenerate if and only if its restriction to any nonempty 
open subset $U\subset X$ is such.
Nondegenerate holomorphic maps $X\to A$ clearly form an open 
subset of the space $\Oscr(X,A)$ of all holomorphic maps in the
compact-open topology. By Lemma \ref{lem:nondegenerate} 
this set is also dense in $\Oscr(X,A)$, and this also holds 
for families of maps depending continuously on a parameter
in a suitable topological space. 

A holomorphic map $f:X\to \boldA\subset\C^n$ from a connected open 
Riemann surface $X$ to the null cone \eqref{eq:nullq} 
is nondegenerate if and only if $f(X)$ is not contained in 
a ray of $\boldA$ (see \cite[Lemma 3.1.1]{AlarconForstnericLopez2021}). 

We shall prove the following h-principle for 
families of directed holomorphic immersions from a family of 
open Riemann surfaces. Compare with the h-principles
for maps from a fixed open Riemann surface in 
\cite[Theorem 2.6]{AlarconForstneric2014IM} and
\cite[Theorem 5.3]{ForstnericLarusson2019CAG}.  

%
%
\begin{theorem}\label{th:directed}
Assume that $A\subset\C^n$ is a connected smooth 
punctured Oka cone \eqref{eq:A} which is 
not contained in any hyperplane, $X$ is a connected smooth open surface, 
$B$ is a parameter space as in Theorem \ref{th:Oka},  
$\Jscr=\{J_b\}_{b\in B}$ is a family of complex structures on $X$ of class 
$\Cscr^{l,(k,\alpha)}$ $(k\ge 1,\ 0\le l\le k+1,\ 0<\alpha<1)$, 
and $\{\theta_b\}_{b\in B}$ is a family of nowhere vanishing 
$J_b$-holomorphic $1$-forms on $X$ furnished by Theorem \ref{th:thetab}.
Given a continuous map $f_0:B\times X\to A$,  
there is a nondegenerate $A$-immersion
$h:B\times X\to \C^n$ of class $\Cscr^{l,k+1}(B\times X)$ 
such that the map $f:B\times X\to A$ defined by 
$f(b,\cdotp)=dh_b/\theta_b$ for all $b\in B$ 
is homotopic to $f_0$. 
\end{theorem}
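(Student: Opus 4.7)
The plan is to combine the parametric Oka principle of Theorem \ref{th:Oka} with a parametric period-dominating spray construction in the spirit of \cite{AlarconForstneric2014IM,AlarconForstnericLopez2021}. Since $A$ is Oka by hypothesis, Theorem \ref{th:Oka} applied with $Y = A$ and empty compact Runge set $K \subset B \times X$ furnishes a homotopy from $f_0$ to an $X$-holomorphic map $f: B \times X \to A$, automatically of class $\Cscr^{l,k+1}$ when $l \geq 1$ by Lemma \ref{lem:Xholomorphic}. It therefore suffices to deform $f$ through $X$-holomorphic $A$-valued maps to a new map $\tilde f$ whose line integrals against $\theta_b$ are path-independent on every fiber $(X, J_b)$; integration then gives the desired $A$-immersion $h(b, x) = \int_{x_0}^{x} \tilde f(b, \cdotp)\,\theta_b$, and this $h$ is automatically an immersion because $A \subset \C^n_*$ and $\theta_b$ is nowhere vanishing.

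Path-independence amounts to the period vanishing conditions
\[
\int_{C_j} \tilde f(b, \cdotp)\,\theta_b = 0 \in \C^n, \qquad b \in B,\ \ j \in \N,
\]
where $\{C_j\}_{j \in \N}$ is a fixed system of smooth closed curves in $X$ whose classes form a basis of $H_1(X, \Z)$; since $X$ is smooth, this basis is independent of $b$. I would exhaust $X$ by compact Runge sets $L_1 \Subset L_2 \Subset \cdots$ chosen so that $C_1, \ldots, C_r \subset L_r$ support $H_1(L_r, \Z)$, and correct the periods inductively along this exhaustion. The key ingredient is a parametric period-dominating spray: for each $r$ and each $b_0 \in B$, an $X$-holomorphic map $F : B' \times X \times \C^{N_r} \to A$ defined on a neighbourhood $B'$ of $b_0$, holomorphic in $\zeta$, with $F(b, x, 0) = f(b, x)$, such that the $\zeta$-linearisation at $\zeta = 0$ of
\[
\C^{N_r} \ni \zeta \longmapsto \Bigl(\int_{C_j} F(b, \cdotp, \zeta)\,\theta_b\Bigr)_{j=1}^r \in (\C^n)^r
\]
is surjective for every $b \in B'$. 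Such a spray is assembled by composing $f(b, \cdotp)$ with flows of finitely many holomorphic vector fields on $A$ supported near generic points on the curves $C_1,\ldots,C_r$; the hypothesis that $A$ is not contained in any hyperplane ensures that enough tangent directions of $A$ are available for period-domination after a small generic perturbation of $f$. Patching the locally defined sprays into a globally defined one uses a $\Cscr^l$ partition of unity on $B$ together with one further application of Theorem \ref{th:Oka} to the spray to restore $X$-holomorphicity of the result.

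With this spray in hand, the implicit function theorem produces a continuous (or $\Cscr^l$) function $\zeta_r : B \to \C^{N_r}$ with $\zeta_r(b)$ arbitrarily small so that $F(b, \cdotp, \zeta_r(b))$ has vanishing periods over $C_1, \ldots, C_r$ for every $b \in B$. The inductive step from $L_{r-1}$ to $L_r$ is completed by applying Theorem \ref{th:Runge} to approximate the new map uniformly on $L_{r-1}$ in the fine $\Cscr^{l,k+1}$ topology by an $X$-holomorphic $A$-valued map on $B \times X$ so closely that the previously arranged period vanishings persist within a controlled tolerance; a further small spray correction then restores exact vanishing without destroying the approximation. Standard convergence of this iterative scheme, with tolerances chosen summable, yields an $X$-holomorphic map $\tilde f : B \times X \to A$ homotopic to $f_0$ with all periods zero, hence integrable to the required $A$-immersion $h$ of class $\Cscr^{l,k+1}$. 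Nondegeneracy is an open and dense condition under the spanning hypothesis on $A$ and is preserved by small generic perturbation of the spray parameters at each step. The principal obstacle is the uniform construction of the period-dominating spray over all $b \in B$ together with the preservation of previously arranged period conditions under the Runge corrections, which is precisely where the new parametric Runge and Mergelyan theorems of this paper replace the scalar toolkit of \cite{AlarconForstneric2014IM}.
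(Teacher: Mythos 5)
Your high-level plan matches the paper's: use Theorem \ref{th:Oka} to make $f_0$ $X$-holomorphic, then kill periods inductively along an exhaustion of $X$ by compact Runge sets using parametric period-dominating sprays and the implicit function theorem. The ingredients are the right ones, but there is a genuine gap in the inductive step.

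The step from $L_{r-1}$ to $L_r$ is not just a Runge approximation problem. When the exhaustion crosses a Morse index-$1$ critical point, a \emph{new} generator $C_r$ of $H_1(L_r,\Z)$ appears that was not supported on $L_{r-1}$. Your scheme calls on the implicit function theorem to produce an ``arbitrarily small'' $\zeta_r(b)$ killing the periods over $C_1,\ldots,C_r$; but the implicit function theorem yields a small correction only when the initial period vector is already close to zero. For $C_1,\ldots,C_{r-1}$ this is arranged inductively, but the period of the current map over the new curve $C_r$ is whatever the fibrewise extension happens to give, and there is no reason for it to be small. A large correction $\zeta_r$ destroys the approximation on $L_{r-1}$ and ruins convergence. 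The paper handles this by working with an \emph{admissible set} $S_i = K_i\cup E_i$, where $E_i$ is the attaching arc of the new handle: one first extends $f_i$ across $E_i$ (a free choice, as $E_i$ carries no holomorphy constraint) so that the period over the new curve is made exactly zero for every $b\in B$ using \cite[Lemma 3.1 and Claim, p.\ 26]{ForstnericLarusson2019CAG}, and only then invokes Mergelyan approximation on the admissible set $S_i$ via Theorem \ref{th:Mergelyan-admissible}. Your proposal has no mechanism that plays the role of the arc $E_i$; Theorem \ref{th:Runge} and Theorem \ref{th:Oka}, which you cite for the inductive step, both operate on Runge compact sets rather than on admissible sets containing arcs, so they give no handle on the newly created period. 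For the same reason the period-dominating spray coefficients $h_j$ must first be constructed on the union of curves $C$ (a one-dimensional admissible set) and then made $X$-holomorphic by Theorem \ref{th:Mergelyan-admissible}, another point your sketch skips. Nondegeneracy is likewise not merely ``preserved by small generic perturbation''; it must be established globally as a preliminary step (the paper's Lemma \ref{lem:nondegenerate}), since nondegeneracy is exactly the hypothesis needed for the period-dominating spray to exist at all, by \cite[Lemma 5.1]{AlarconForstneric2014IM}. Finally, a minor misattribution: $A$-valued maps are approximated via Theorem \ref{th:Oka} or \ref{th:Okabis}, not Theorem \ref{th:Runge}, which is scalar.
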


The analogue of Theorem \ref{th:directed} also holds with 
approximation on a Runge subset $K\subset B\times X$ 
(see Definition \ref{def:Runge}) and interpolation on 
$Q\times X$ (for a closed subset $Q\subset B$)
as in Theorem \ref{th:Oka} and Remark \ref{rem:Q}. 
Furthermore, it holds with approximation on $B\times S$ 
where $S=K\cup E$ is a Runge admissible subset of $X$ 
(see Definition \ref{def:admissible}); cf.\ 
Theorem \ref{th:Mergelyan-admissible}. 
These additions will be evident from the proof.

Taking $A=\C^*=\C\setminus \{0\}$ we obtain the following
corollary to Theorem \ref{th:directed}, which extends the 
Gunning--Narasimhan theorem \cite{GunningNarasimhan1967} 
to families of complex structures on a smooth open surface.
This result also holds with the addition of approximation
conditions described above.

%
%
\begin{corollary}\label{cor:GunningNarasimhan}
Given a smooth open surface $X$ and a family 
$\{J_b\}_{b\in B}$ of complex structures on $X$ as in 
Theorem \ref{th:directed}, there is a function 
$h:B\times X\to\C$ of class $\Cscr^{l,k+1}$ 
such that $h(b,\cdotp):X\to\C$ is a $J_b$-holomorphic immersion 
for every $b\in B$. 
\end{corollary}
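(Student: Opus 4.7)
The plan is to obtain the corollary as an immediate special case of Theorem \ref{th:directed} applied with $n = 1$ and the punctured cone $A = \C^* = \C \setminus \{0\}$. First I would verify that $A = \C^*$ satisfies the hypotheses of Theorem \ref{th:directed}: it is the smooth cone of the form \eqref{eq:A} corresponding to $Y = \CP^0$, which is a single point and hence trivially an Oka manifold (so $A$ itself is Oka; alternatively, $\C^*$ is a complex Lie group, which suffices). The only linear hyperplane of $\C$ is $\{0\}$, so since $1\in \C^*$, the cone $A$ is not contained in any hyperplane (and its convex hull is $\C$).

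Next I would supply an initial map to feed into Theorem \ref{th:directed}. The constant map $f_0 : B \times X \to \C^*$, $f_0 \equiv 1$, is continuous and takes values in $A$. Theorem \ref{th:directed} then produces a nondegenerate $A$-immersion $h : B \times X \to \C$ of class $\Cscr^{l,k+1}(B\times X)$ with $h_b = h(b,\cdotp) : X \to \C$ a $J_b$-holomorphic $A$-immersion for every $b \in B$.

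Finally I would unpack what ``$A$-immersion'' means in this setting. By Definition \ref{def:nondegenerate}(iv), the condition is that the map $B \times X \ni (b,x) \mapsto dh_b(x)/\theta_b(x) \in A$ is well defined; that is, for every $b \in B$ the holomorphic $1$-form $dh_b$ on $(X,J_b)$ takes values in $\C^* = \C\setminus\{0\}$ when written as a multiple of the nowhere vanishing form $\theta_b$. This is exactly the statement that $dh_b$ is a nowhere vanishing holomorphic $1$-form on $(X,J_b)$, i.e., that $h_b : X \to \C$ is a $J_b$-holomorphic immersion. The regularity $\Cscr^{l,k+1}(B \times X)$ is inherited directly from Theorem \ref{th:directed}, completing the proof.

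I do not anticipate any serious obstacle: Theorem \ref{th:directed} already incorporates the required regularity and the family-of-complex-structures framework, and for $n = 1$ the nondegeneracy hypothesis in Definition \ref{def:nondegenerate} becomes automatic, since $T_z A = \C$ spans $\C$ at every $z \in \C^*$. The only thing worth remarking is that the family of trivializing $1$-forms $\{\theta_b\}_{b\in B}$ supplied by Theorem \ref{th:thetab}, which is built into the hypotheses of Theorem \ref{th:directed}, is precisely what allows the pointwise identification of ``immersion'' with ``derivative in $\C^*$'' to be carried out globally and with the asserted joint regularity in $(b,x)$.
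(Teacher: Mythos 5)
Your proposal is correct and follows essentially the same route as the paper, which obtains the corollary directly from Theorem \ref{th:directed} by taking $A=\C^*=\C\setminus\{0\}$; your verifications (that $\C^*$ is an Oka cone of the form \eqref{eq:A} with $Y=\CP^0$, not contained in a hyperplane, that the constant map $f_0\equiv 1$ serves as the initial map, and that a $\C^*$-immersion is precisely a family of $J_b$-holomorphic immersions since $dh_b/\theta_b$ is nowhere vanishing) are exactly the routine details the paper leaves implicit.
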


If $h$ is as in the corollary then $|h(b,\cdotp)|^2$ is 
a smooth strongly subharmonic function on the Riemann surface
$(X,J_b)$ for every $b\in B$. By using Theorem \ref{th:Runge}
it is easy to find a function $\rho:B\times X\to\R_+$ of the form 
$\rho=\sum_i |f_i|^2$, where each $f_i:B\times X\to\C$ is 
$\Jscr$-holomorphic, satisfying the following.

%
%
\begin{corollary}\label{cor:subharmonic}
Given a smooth open oriented surface $X$ and a family 
$\{J_b\}_{b\in B}$ of complex structures on $X$ as in 
Theorem \ref{th:directed}, there is a function $\rho:B\times X\to\R_+$
of class $\Cscr^{l,k+1}$ such that 
$\rho(b,\cdotp):X\to\R_+$ is a smooth strongly 
$J_b$-subharmonic exhaustion function for every $b\in B$.
\end{corollary}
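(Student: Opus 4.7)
Following the hint, I would construct $\rho$ as $\rho=|h|^2+\sum_{j\ge 1}|f_j|^2$, where $h:B\times X\to\C$ is the $X$-holomorphic immersion of class $\Cscr^{l,k+1}$ provided by Corollary \ref{cor:GunningNarasimhan} and the $f_j:B\times X\to\C$ are $X$-holomorphic functions of class $\Cscr^{l,k+1}$ to be built by approximation. The strong $J_b$-subharmonicity of $\rho(b,\cdotp)$ is then automatic: $|h(b,\cdotp)|^2$ has Laplacian $4|dh_b/\theta_b|^2>0$ in the $J_b$-structure (since $h_b$ is a $J_b$-holomorphic immersion), while each $|f_j(b,\cdotp)|^2$ contributes a nonnegative $J_b$-subharmonic term. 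Smoothness of $\rho(b,\cdotp)$ in the $J_b$-atlas follows from uniform compact convergence of the tail together with Cauchy estimates for the $J_b$-holomorphic functions $f_j(b,\cdotp)$, which upgrade uniform convergence to $\Cscr^\infty$ convergence on smaller compacts. So the task reduces to building the $f_j$ so that the sum lies in $\Cscr^{l,k+1}(B\times X)$ and each $\rho(b,\cdotp)$ is a fiberwise exhaustion of $X$.

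Fix normal exhaustions $K_1\subset K_2\subset\cdots$ of $X$ by compact Runge sets with smooth boundary and $K_j\subset\mathring K_{j+1}$, and $B=\bigcup_j B_j$ by compacts. For the $\Cscr^{l,k+1}$ regularity of the sum I would require the approximation $\|f_j\|_{\Cscr^{l,k+1}(B_j\times K_{j+1})}<2^{-j}$, whence on any $B_m\times K_m$ the tail $\sum_{j\ge m}|f_j|^2$ is controlled in every derivative of biorder $(l,k+1)$ and $\rho\in\Cscr^{l,k+1}(B\times X)$. For fiberwise exhaustion it suffices to arrange that $|f_j(b,\cdotp)|\ge\sqrt{j}/2$ on the annulus $K_{j+2}\setminus K_{j+1}$ uniformly for $b\in B_j$, since then $\{\rho(b,\cdotp)\le M\}\subset K_{N+1}$ as soon as $N\ge 4M$ and $b\in B_N$.

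The main obstacle is producing an $X$-holomorphic $f_j$ that is simultaneously $2^{-j}$-small on $K_{j+1}$ and bounded below by $\sqrt{j}/2$ throughout the annulus $K_{j+2}\setminus K_{j+1}$: finite interpolation conditions will not do, since the maximum principle permits $|f_j|$ to dip to zero between prescribed values. My plan is to apply Theorem \ref{th:Mergelyan-admissible} (for $l\ge 1$; for $l=0$ I would instead use Theorem \ref{th:Mergelyan} on coordinate patches glued by a partition of unity in the parameter, followed by a Runge correction via Theorem \ref{th:Runge} to restore $\Cscr^{0,k+1}$ regularity) to the Runge admissible set $S_j:=K_{j+1}\cup E_j$, where $E_j$ is a finite union of smooth Jordan arcs that is $\delta_j$-dense in $K_{j+2}\setminus K_{j+1}$ and meets every component of that annulus; the seed datum equals $0$ on a neighborhood of $K_{j+1}$ and $\sqrt{j}$ on a tubular neighborhood of $E_j$. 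A Cauchy-type $\Cscr^1$ estimate on the resulting $J_b$-holomorphic $f_j(b,\cdotp)$ yields $|f_j(b,x)-f_j(b,p)|\le C_j\sqrt{j}\,\dist(x,p)$ for the $p\in E_j$ nearest to $x$; choosing $\delta_j\le 1/(4C_j)$ then forces $|f_j(b,x)|\ge\sqrt{j}/2$ throughout $K_{j+2}\setminus K_{j+1}$. The delicate point is bounding $C_j$ uniformly in $b\in B_j$ and $j$, which I would handle by passing to the reference structure $J_{b_0}$ via the biholomorphisms of Theorem \ref{th:Beltrami2} on a relatively compact neighborhood and invoking the classical interior Cauchy bounds there.
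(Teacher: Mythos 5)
Your overall strategy (take $\rho=|h|^2+\sum_j|f_j|^2$ with $h$ the immersion from Corollary \ref{cor:GunningNarasimhan} supplying strict subharmonicity, and use the $f_j$ to force the exhaustion property) is exactly what the paper intends, but your specification of the $f_j$ contains a genuine flaw. You require simultaneously $\|f_j\|_{\Cscr^{l,k+1}(B_j\times K_{j+1})}<2^{-j}$ (hence $\sup_{K_{j+1}}|f_j(b,\cdot)|<2^{-j}$) and $|f_j(b,\cdot)|\ge\sqrt{j}/2$ throughout $K_{j+2}\setminus K_{j+1}$ for $b\in B_j$. Since $K_{j+1}\subset\mathring K_{j+2}$ and $K_{j+1}$ is smoothly bounded, every point of $bK_{j+1}$ is a limit of points of $K_{j+2}\setminus K_{j+1}$, so by continuity $|f_j(b,\cdot)|\ge\sqrt{j}/2$ on $bK_{j+1}\subset K_{j+1}$, contradicting $|f_j(b,\cdot)|<2^{-j}\le\sqrt j/2$ there. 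No function, holomorphic or not, satisfies both conditions; the requirements must be separated by a buffer annulus (e.g.\ smallness on $K_j$, largeness only on $K_{j+2}\setminus\mathring K_{j+1}$). Independently of this, the step producing the lower bound on the annulus is unjustified: the Mergelyan approximation on the thin admissible set $S_j=K_{j+1}\cup E_j$ controls $f_j$ only on $S_j$, so you have no a priori bound on $\sup|f_j|$ on a neighbourhood of the annulus, and therefore no Cauchy/interior estimate of the form $|f_j(b,x)-f_j(b,p)|\le C_j\sqrt j\,\dist(x,p)$ with a controllable $C_j$; passing to the reference structure by Theorem \ref{th:Beltrami2} does not help, because the classical interior Cauchy bound still needs a sup bound that the construction never provides. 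Moreover the choice $\delta_j\le 1/(4C_j)$ is circular: $C_j$ depends on $f_j$, which is only constructed after $E_j$ (hence $\delta_j$) has been fixed.

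Both problems disappear if you follow the route the paper indicates (the single sentence before the corollary), which uses Theorem \ref{th:Runge} and finitely many functions per annulus rather than one function with a pointwise lower bound. For each $j$, the compact annulus $A_j=K_{j+2}\setminus\mathring K_{j+1}$ is disjoint from the Runge compact $K_j$; for each $p\in A_j$ choose a small closed disc $\overline D(p)\subset X\setminus K_j$ with $K_j\cup\overline D(p)$ Runge, and apply Theorem \ref{th:Runge} to the locally constant datum ($0$ near $K_j$, the constant $2\sqrt j$ near $\overline D(p)$), which is $X$-holomorphic near $B\times(K_j\cup\overline D(p))$; the fine $\Cscr^{l,k+1}$ approximation gives a global $X$-holomorphic function that is $<2^{-j}$-small in $\Cscr^{l,k+1}$ on $B_j\times K_j$ and of modulus $>\sqrt j$ on $\overline D(p)$ for the relevant parameters. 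Compactness of $A_j$ yields finitely many functions $f_{j,1},\dots,f_{j,N_j}$ with $\sum_i|f_{j,i}(b,\cdot)|^2>j$ on $A_j$ and with summable $\Cscr^{l,k+1}$ norms on the exhausting sets, and $\rho=|h|^2+\sum_{j,i}|f_{j,i}|^2$ then has all the stated properties; no maximum-principle or boundary-continuity issue arises because no single function is required to be both small on a compact and large on an adjacent set, and the lower bound comes from the sum of squares, not from propagating values off a thin set.
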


%
%
\begin{proof}[Proof of Theorem \ref{th:directed}]
We shall adapt the proof of the parametric h-principle for directed 
holomorphic immersions from an open Riemann surface, given in 
\cite[Secs.\ 4--5]{ForstnericLarusson2019CAG}. 
(See especially the proofs of Theorems 4.1 and 5.3 in 
\cite{ForstnericLarusson2019CAG}, where the reader can 
find further details.) For the nonparametric case, see  
\cite[Theorem 2.6]{AlarconForstneric2014IM} and 
\cite[Theorem 3.6.1]{AlarconForstnericLopez2021}.

For simplicity of exposition, we assume that the parameter space 
$B$ is a compact neighbourhood retract in a Euclidean space $\R^m$ . 
The general case requires an additional induction with 
respect to a covering of $B$ by such compacts, which proceeds 
as in the proof of Theorem \ref{th:Oka}. 

By Theorem \ref{th:Oka} we can deform $f_0$ to a $\Jscr$-holomorphic 
map $f_1:B\times X\to A$ of class $\Cscr^{l,k+1}$. 
The following lemma shows that we can choose $f_1$
to be nondegenerate in the sense of Definition \ref{def:nondegenerate}. 

%
%
\begin{lemma}\label{lem:nondegenerate}
(Assumptions as in Theorem \ref{th:directed}.) Every $\Jscr$-holomorphic 
map $f:B\times X\to A$ of class $\Cscr^{l,k+1}$ 
can be approximated in the $\Cscr^{l,k+1}$ topology on 
compacts by nondegenerate $\Jscr$-holomorphic maps 
of class $\Cscr^{l,k+1}$ homotopic to $f$. If in addition $Q$
is a closed subset of $B$ and the map $f(b,\cdotp):X\to A$ 
is nondegenerate for every $b\in Q$ then the deformation 
with the stated properties can be chosen to agree with $f$
on $Q\times X$.
\end{lemma}

\begin{proof}
Since the cone $A$ is not contained in any hyperplane of $\C^n$,
there is an integer $q\in \N$ such that for a generic $q$-tuple of points 
$z_1,\ldots,z_q$ in $A$ we have $\sum_{i=1}^q T_{z_i}A=\C^n$. 
(We consider the tangent space $T_zA$ as a subspace of $\C^n$.) 
To deform a map $f:X\to A$ to a nondegenerate one, it suffices 
to push a finite subset of $f(X)$ to such a generic position. 
A procedure for a single map is given in 
\cite[Theorem 2.3 (a)]{AlarconForstneric2014IM};  
for a family of maps from an open Riemann surface, 
see \cite[Theorem 5.4]{ForstnericLarusson2019CAG}.
Since the proof of the latter result does not include the details
on how to apply the general position argument,
I take this occasion to provide the details also for 
a variable family of complex structures on $X$.

Assume as before that $B$ is a compact subset of $\R^m$;
the general case will follow by using cutoff functions in the parameter
$b\in B$ as will be clear from the proof. 
For any integer $r\in\N$ we define a subvariety 
$I_r\subset A^r=\overbrace{A\times \cdots\times A}^{r\rm\ times}$ by
\begin{equation}\label{eq:Ir}
	I_r = \big\{z=(z_1,\ldots,z_r)\in A^r: 
		\sum_{i=1}^r T_{z_i}A \ne \C^n\big\}.
\end{equation}
Note that $I_r$ is invariant under the action of the symmetric group
$S_r$ on $r$ elements permuting the components of $A^r$.
Since $A\cup \{0\}$ is an algebraic subvariety of $\C^n$ having 
the origin as the only singular point, 
there are finitely many algebraic vector fields 
$V_1,\ldots, V_d$ on $\C^n$ that are tangent to $A$, 
vanish in $0\in \C^n$, and span the tangent space
$T_z A$ for every $z\in A$. 
Then, $I_r$ is the set of points $z=(z_1,\ldots,z_r)\in A^r$ 
at which the $n\times (rd)$ matrix with columns $V_i(z_j)$ for $i=1,\ldots,d$
and $j=1,\ldots,r$ has rank $<n$, so it is an algebraic subvariety of $A^r$.
Since $A$ is not contained in any hyperplane of $\C^n$,
there is an integer $q\in\N$ such that 
$I_{q}$ is a proper algebraic subvariety of $A^{q}$.

We claim that the codimension of $I_r$ in $A^r$
increases to infinity as $r\to\infty$.
This implies that for $r$ big enough, every continuous map 
$g=(g_1,\ldots,g_r):B \to A^r$ can be uniformly approximated by maps 
whose images avoid $I_r$. For smooth maps on open domains in $\R^m$, 
this is an immediate consequence of the transversality theorem
when $\mathrm{codim} (I_r,A^r) > m$. By the Tietze extension
theorem, the continuous map $g:B\to A^r$ extends to a 
continuous map $g:\R^m\to (\C^n)^r$. There is an open neighbourhood
$\Omega\subset (\C^n)^r$ of $A^r$ and a smooth retraction
$\rho:\Omega\to A^r$. Pick a neighbourhood $U\subset \R^m$ 
of $B$ such that $g(U)\subset \Omega$. 
Then, the map $\rho\circ g:U \to A^r$ 
is a continuous extension of $g$.
Approximating this map uniformly on $B$ gives smooth maps 
$U \to A^r$ to which the previous argument applies.

The claim that $\lim_{r\to\infty} \mathrm{codim}(I_r,A^r)=\infty$ is especially 
simple to prove when $A$ is the null quadric \eqref{eq:nullq}
in $\C^n$ for $n>2$. In this case, $I_r=\wt I_r \cap A^r$ where 
$\wt I_r\subset (\C^n_*)^r$ consists of all $r$-tuples $z=(z_1,\ldots,z_r)$ 
of $\C$-collinear vectors $z_i\in \C^n_*$. For $r=2$, the collinearity 
condition on $z_1=(z_{1,1},\ldots,z_{1,n})\in\C^n_*$ 
and $z_2=(z_{2,1},\ldots,z_{2,n})\in\C^n_*$ is locally expressed by $n-1$ 
independent algebraic equations, given by vanishing of certain 
minors of the $2\times n$ matrix of coefficients. 
For $r\ge 2$ we have locally $(r-1)(n-1)$ independent equations
for $\wt I_r$, so this is the codimension of $\wt I_r$ in $(\C^n_*)^r$. 
Since $A^r$ has codimension $r$ in $(\C^n_*)^r$, 
the codimension of $I_r$ in $A^r$ is at least $(r-1)(n-1)-r=r(n-2)-n+1$. 
Since $n>2$, this grows to infinity with $r$.

In the general case we argue as follows. Assume that $r \in \N$ is big enough
such that $I_r$ is a proper subvariety of $A^r$. Set $s=\dim I_r$,
so $\mathrm{codim} (I_{r},A^{r})=r\dim A - s$. 
Let $I'$ be an irreducible component of $I_r$ of 
maximal dimension $s$. Since $I'$ is a proper subvariety of $A^r$,
up to a permutation of the components there are points 
$(z_1,\ldots,z_r)\in I'$ and $z'_r\in A\setminus \{z_r\}$ such that 
$(z_1,\ldots,z_{r-1},z'_r)\in A^r\setminus I_r$. It follows that 
$(z_1,\ldots,z_{r-1},z_r,z'_r)\in A^{r+1}\setminus I_{r+1}$.
Note that the projection $\pi:A^{r+1}\to A^r$ which deletes the 
last component maps $I^{r+1}$ to $I^r$.
It follows that $I_{r+1}\cap \pi^{-1}(I')$ is a proper subvariety 
of $I'\times A$, so its dimension is at most 
$\dim I'+\dim A -1=s +\dim A-1$. On an irreducible component
$I''$ of $I_r$ of dimension $\dim I''<s$ we have 
$\dim (I_{r+1}\cap \pi^{-1}(I'')) \le s-1 + \dim A$. This shows that 
\begin{eqnarray*}
	\mathrm{codim} (I_{r+1},A^{r+1}) & \ge & 
	(r+1)\dim A-(s-1 + \dim A) = r\dim A - s +1 \\
	&=& \mathrm{codim} (I_{r},A^{r})+1.
\end{eqnarray*}
This establishes the claim that 
$\lim_{r\to\infty} \mathrm{codim} (I_{r},A^{r})=\infty$.

Assume now that $f:B\times X\to A$ is a $\Jscr$-holomorphic map.
Choose $r\in \N$ big enough such that $\mathrm{codim}(I_r,A^r)>m$.
Pick a compact Runge set $K\subset X$
with nonempty interior and distinct points $x_1,\ldots, x_r\in \mathring K$,
and set $g_i=f(\cdotp,x_i):B\to A$ for $i=1,\ldots,r$.
By what was said above, we can approximate the maps $g_i$
uniformly on $B$ by maps $\tilde g_i: B\to A$ such that 
the image of the map $\tilde g=(\tilde g_1,\ldots,\tilde g_r):B\to A^r$ 
avoids the subvariety $I_r$ in \eqref{eq:Ir}. To prove the lemma,
it remains to show that if the above
approximations are close enough, there is a $\Jscr$-holomorphic map 
$\tilde f:B\times X\to A$ approximating $f$ on $B\times K$ 
such that $\tilde f(\cdotp,x_i):B\to A$ is so close to $\tilde g_i:B\to A$ 
for $i=1,\ldots,r$ that $\tilde f$ is nondegenerate.
This type of perturbation can be obtained by flows of vector fields
as described in \cite[proof of Theorem 5.4]{ForstnericLarusson2019CAG};
let us recall the main idea. 

Assume first that $A$ is flexible, 
in the sense that there exist finitely many $\C$-complete holomorphic 
vector fields $V_1,\ldots, V_d$ on $A$ which span the tangent space of $A$ 
at every point. (This holds in particular for the null quadric $\boldA$, 
cf.\ \cite[Example 4.4]{AlarconForstneric2014IM}.
A partial list of references to examples of flexible varieties
can be found in \cite[p.\ 394]{Forstneric2023Indag} 
preceding Example 6.3 ibid.) 
For each $i=1,\ldots, r$ choose a $\Jscr$-holomorphic function 
$\xi_i:B\times X\to\C$ such that 
$\xi_i(b,x_j)=\delta_{i,j}$ (the Kronecker delta) for $j=1,\ldots,r$;
see Theorem \ref{th:Runge}. Let $\phi^j_t$ denote the flow of $V_j$
for $j=1,\ldots,d$. For every $i=1,\ldots,r$ we consider the map 
$\Psi_i: B\times X \times \C^d \times A \to A$ given by 
\begin{equation}\label{eq:Psi-i}
	\Psi_i(b,x,\zeta,z)=
	\phi^1_{\zeta_1 \xi_i(b,x)} 
	\circ \cdots  \circ \phi^d_{\zeta_{d} \xi_{i}(b,x)} (z),
\end{equation}
where $\zeta=(\zeta_1,\ldots,\zeta_d)\in\C^d$. 
Note that $\Psi_i(b,\cdotp,\cdotp,\cdotp)$ is $J_b$-holomorphic
in $x\in X$ and holomorphic on $\C^d\times A$, and it satisfies 
the following conditions.
\begin{enumerate}[\rm (i)]
\item $\Psi_i(b,x,0,z)=z$ for all $(b,x) \in B\times X$ and $z\in A$.
\item $\Psi_i(b,x_j,\zeta,z)=z$ for all $b\in B$, $\zeta\in\C^d$, $z\in A$,
and $j\in \{1,\ldots,r\}\setminus \{i\}$. 
\item The differential 
$
	\di_\zeta \Psi_i(b,x_i,\zeta,z) \big|_{\zeta=0}:\C^d\to T_z A 
$
is surjective at $z=f(b,x_i)$ for all $b\in B$. 
\end{enumerate}
Let $\zeta_j=(\zeta_{j,1},\ldots,\zeta_{j,d})\in\C^d$ for $j=1,\ldots,r$.
We define the maps $\Phi_i: B\times X \times (\C^d)^i \times A \to A$ 
for $i=1,\ldots,r$ by taking $\Phi_1=\Psi_1$ and
\begin{equation}\label{eq:Phi-i}
	\Phi_{i}(b,x,(\zeta_1,\ldots,\zeta_{i}),z) =
	\Psi_{i}(b,x,\zeta_i,\Phi_{i-1}(b,x,(\zeta_1,\ldots,\zeta_{i-1}),z))
\end{equation}
for $i=2,\ldots,r$. Set $\zeta=(\zeta_1,\ldots,\zeta_r)\in (\C^d)^r$.
The properties (i)--(iii) of $\Psi_i$ imply that the map
\begin{equation}\label{eq:Thetab}
	\Theta_{i}(b)=\di_{\zeta_i} \Phi_r(b,x_i,\zeta,f(b,x_i)) \big|_{\zeta=0}:
	\C^d\to T_{f(b,x_i)} A 
\end{equation}
is surjective for every $b\in B$ and $i=1,\ldots,r$.
Assuming that a map $\tilde g_i:B\to A$ is close enough 
to $g_i=f(\cdotp,x_i)$ for every $i=1,\ldots,r$, the implicit function 
theorem gives functions $\zeta_i:B\to\C^d$ close to the origin 
for $i=1,\ldots,r$ such that the map $\tilde f:B\times X\to A$ given by
\[
	\tilde f(b,x) = \Phi_r(b,x,(\zeta_1(b),\ldots,\zeta_r(b)),f(b,x)) \in A
\]
is $\Jscr$-holomorphic, close to $f$ on $B\times K$, 
and satisfies $\tilde f(b,x_i)=\tilde g_i(b)$ for $i=1,\ldots,r$.
(More precisely, for every $i=1,\ldots,r$ we split the the trivial bundle 
in the form $B\times \C^d=E_i\oplus E'_i$, where 
$E'_i\subset B\times \C^d$ 
is the complex vector subbundle with fibres $E'_{i,b}= \ker\Theta_{i}(b)$
(see \eqref{eq:Thetab}), and apply the implicit function theorem 
to find sections $\zeta_i:B\to E_i$ with the above property for $i=1,\ldots,r$.)

If the cone $A$ is not necessarily flexible, 
the same procedure can be used to
obtain a nondegenerate $\Jscr$-holomorphic map $\tilde f:B\times K\to A$
with the stated properties. Indeed, the maps $\Psi_i$ \eqref{eq:Psi-i}
and $\Phi_i$ \eqref{eq:Phi-i} are now defined for the values of 
$\zeta$ in a ball around the origin whose radius may be chosen 
independent of $x\in K$, and the arguments carry over verbatim. 
The proof is then completed by approximating this map 
on $B\times K$ by a $\Jscr$-holomorphic map $B\times X\to A$,
using Theorem \ref{th:Oka}.

The interpolation condition on a closed subset $Q\subset B$ as in 
the lemma can be fulfilled by using a cutoff function in the parameter $b$
in the above construction of the deformation.  
\end{proof}

In the sequel, $f_1:B\times X\to A$ is a nondegenerate 
$\Jscr$-holomorphic map of class $\Cscr^{l,k+1}$ given by 
Lemma \ref{lem:nondegenerate}. 
Fix a complex structure $J$ on $X$ and a strongly $J$-subharmonic 
Morse exhaustion function $\rho:X\to \R_+$. There is an exhaustion 
$\varnothing = K_0\subset K_1\subset\cdots \subset 
\bigcup_{i=0}^\infty K_i=X$ by smoothly bounded compact Runge sets 
$K_i=\{x\in X:\rho(x)\le c_i\}$ for a sequence of regular values
$0<c_1<c_2\cdots$ of $\rho$, with $\lim_i c_i=+\infty$, 
such that $K_{1}\ne \varnothing$ and for every $i\in \Z_+$ the 
open set  
$ 
	D_i=\mathring K_{i+1}\setminus K_i
$
contains at most one critical point of $\rho$. 
(See \cite[Sect.\ 1.4]{AlarconForstnericLopez2021}.)
In particular, $K_1$ is a disc. 
Recall that $\{\theta_b\}_{b\in B}$
is a family of nowhere vanishing $J_b$-holomorphic 1-forms
on $X$ furnished by Theorem \ref{th:thetab}.
We shall inductively construct a sequence of 
open neighbourhoods $U_i\subset B\times X$ of
$B\times K_i$, maps $f_i:U_i\to A$ of class $\Cscr^{l,k+1}$, 
and numbers $\epsilon_i>0$ such that the 
following conditions hold for every $i=1,2,\ldots$. 
\begin{enumerate}[\rm (a)]
\item The map $f_{i,b}:U_{i,b}\to A$ is $J_b$-holomorphic 
and nondegenerate for every $b\in B$.
\item $\int_C f_{i,b}\theta_{b}=0$ for every closed curve $C\subset K_i$.
\item  $f_i$ is homotopic to $f_1|_{U_i}$ through maps $U_i\to A$.
\item $\|f_{i+1}-f_i\|_{\Cscr^{l}(B\times K_i)} < \epsilon_i$.
\item $0<\epsilon_{i+1}<\epsilon_i/2$, and 
if $f:B\times X\to A\cup\{0\}$ is an $\Jscr$-holomorphic map of class
$\Cscr^{l}$ such that $\|f-f_i\|_{\Cscr^{l}(B\times K_i)} < 2\epsilon_i$
then $f$ is nondegenerate and $f(B\times K_{i-1})\subset A$.
\end{enumerate}
Under these conditions, the limit map 
$f=\lim_{i\to\infty}f_i:B\times X\to A$ exists and is of class 
$\Cscr^{l}(B\times X)$, it is $\Jscr$-holomorphic (hence 
of class $\Cscr^{l,k+1}(B\times X)$ by Lemma \ref{lem:Xholomorphic}),
nondegenerate (see Definition \ref{def:nondegenerate}), 
homotopic to $f_0$, and 
$\int_C f(b,\cdotp) \theta_{b}=0$ holds for every closed curve 
$C\subset X$ and $b\in B$. Fixing a point $x_0\in X$, 
the map $h:B\times X\to\C^n$ given by 
\[
	h(b,x)=\int_{x_0}^x f(b,\cdotp)\theta_b, \qquad b\in B,\ x\in X
\]
is well-defined and satisfies $dh(b,\cdotp)=f(b,\cdotp)\theta_b$
$(b\in B)$, so it is a nondegenerate $A$-immersion.

We now explain the induction.
The assumptions imply that $K_1$ is a smoothly bounded compact disc.
Let $U_1$ be an open disc containing $K_1$, and let
$f_1:B\times X\to A$ be the initial nondegenerate map.
Assume inductively that $i\in \N$ and we have already found maps $f_j$ 
with the required properties for $j=1,\ldots,i$, and let us explain
how to obtain the next map $f_{i+1}$. We distinguish two cases.

{\em The noncritical case:} The domain 
$D_i=\mathring K_{i+1}\setminus K_i$ does not contain any critical point 
of $\rho$. 

{\em The critical case:} $D_i$ contains a unique (Morse) critical point 
of $\rho$. 

We begin with the noncritical case. 
Then, $K_{i}$ is a strong deformation retract of $K_{i+1}$
and $D_i$ is a finite union of annuli. In particular, the inclusion 
$K_i\hra K_{i+1}$ induces an isomorphism of their homology groups
$H_1(K_i,\Z)\cong H_1(K_{i+1},\Z)$. Assume that $K_i$ is connected;
the procedure that we shall explain can be performed independently
on every connected component. Fix a point $x_0\in\mathring K_i$.
There are finitely many smooth Jordan curves 
$C_1,\ldots,C_m\subset K_i$ such that any two of them
only intersect at $x_0$, they form a basis of the homology group
$H_1(K_i,\Z)$, and their union $C=\bigcup_{j=1}^m C_j$ is Runge in $X$.
The same curves then form a basis of $H_1(K_{i+1},\Z)$.
Consider the period map 
$\Pcal:B\times \Cscr(B\times C,A)\to (\C^n)^m$ given for any 
$b\in B$ and $f\in \Cscr(B\times C,A)$ by
\begin{equation}\label{eq:periodmap}
	\Pcal(b,f) = \biggl( \oint_{C_j} f(b,\cdotp)\theta_b \biggr)_{j=1,\ldots,m}
	\in (\C^n)^m. 
\end{equation}convex 
By condition (b) we have that $\Pcal(b,f_i)=0$ for all $b\in B$.
Since the map $f_i:B\times K_i\to A$ is nondegenerate, 
we can apply \cite[Lemma 5.1]{AlarconForstneric2014IM} (see also 
\cite[Lemma 3.2.1]{AlarconForstnericLopez2021}) to find a 
{\em period dominating spray} of $J_b$-holomorphic  maps 
\[
	F_i(\zeta,b,\cdotp) : K_i \to A \quad 
	\text{for $b\in B$},
\]
of class $\Cscr^{l}(B\times K_i)$, 
depending holomorphically on $\zeta=(\zeta_1,\ldots,\zeta_N)$ 
in a ball $\B \subset \C^N$, such that 
$F_i(0,\cdotp,\cdotp)=f_i$. (Recall that a map is called holomorphic
on a compact set if it is holomorphic in an open neighbourhood
of the said set.) The period domination property means that the map 
\begin{equation}\label{eq:period}
	\B \ni \zeta \longmapsto 
	\Pcal(b,F_i(\zeta,b,\cdotp))=
	\bigg( \oint_{C_j} F_i(\zeta,b,\cdotp) \theta_b 
	\bigg)_{j=1,\ldots,m} \in (\C^n)^m
\end{equation}
is submersive at $\zeta=0$, i.e., its differential at $\zeta=0$ 
is surjective for every $b\in B$. A map $F_i$ with these properties
can be chosen to be of the form 
\begin{equation} \label{eq:Psi}
	\Psi(\zeta,b,x)=
	\phi^1_{\zeta_1 h_1(b,x)} 
	\circ \cdots  \circ \phi^N_{\zeta_{N} h_{N}(b,x)} (f(b,x))
\end{equation}
where $\zeta=(\zeta_1,\ldots,\zeta_N)\in \C^N$ for a suitable $N\in\N$, 
$(b,x)\in B\times K_i$, every $\phi^j$ is the flow of one of the 
holomorphic vector fields $V_1,\ldots, V_d$ on $A$ 
(possibly with repetitions) which span the tangent space of 
$A$ at every point, and the functions $h_j\in \Cscr^{l,k+1}(B\times X)$ 
are $\Jscr$-holomorphic. 
We begin by choosing functions $h_{j}:B\times C\to \C$ 
of class $\Cscr^{l}$ to ensure that the 
map $\Psi$ in \eqref{eq:Psi} is period dominating on the curves in $C$; 
see \cite[proof of Lemma 3.2.1]{AlarconForstnericLopez2021}
for the details. By the parametric Mergelyan theorem 
(see Theorem \ref{th:Mergelyan} when $l=0$ 
and Theorem \ref{th:Mergelyan-admissible} when $l>0$) 
we can approximate the functions $h_{j}$ in $\Cscr^{l}(B\times C)$ 
by $\Jscr$-holomorphic functions of class $\Cscr^{l}(B\times K_i)$  
depending holomorphically on $\zeta$. 
By Lemma \ref{lem:Xholomorphic} these approximants are 
then of class $\Cscr^{l,k+1}(B\times K_i)$.
If the approximation is close enough then the resulting map 
has the stated properties.

For each $b\in B$ let $V_b\subset \C^N$ denote the kernel 
of the differential of the period map \eqref{eq:period} at $\zeta=0$.
This is a complex vector subspace of $\C^N$ with 
$\dim V_b=N-mn$ which is of class $\Cscr^l$ in $b\in B$.
Let $W_b\subset \C^N$ denote the orthogonal complement
of $V_b$. Fix a number $0<r<1$. Since $A$ is an Oka manifold and $K_i$ is a
strong deformation retract of $K_{i+1}$, Theorem \ref{th:Okabis} allows us 
to approximate $F_i$ in the $\Cscr^l$ topology 
on $r\B\times B\times K_i$ by a family of $J_b$-holomorphic maps 
$g(\zeta,b,\cdotp): K_{i+1} \to A$ $(\zeta \in r\B, \ b\in B)$
which are holomorphic in $\zeta$ and of the same regularity
class as $F_i$. If the approximation is sufficiently 
close, the implicit function theorem gives  
a map $\zeta: B \to \B$ of class $\Cscr^l(B)$, close to the 
zero map, such that $\zeta(b)\in W_b$ for all $b\in B$ and 
the $J_b$-holomorphic map 
\[
	f_{i+1}(b,\cdotp) := g(\zeta(b),b,\cdotp) :  K_{i+1}\to A
\] 
satisfies the period vanishing conditions $\Pcal(b,f_{i+1})=0$ 
in \eqref{eq:period} for every $b\in B$. If the approximations 
were close enough then the map $f_{i+1}$ is nondegenerate. 
To complete the induction step, we choose a number $\epsilon_{i+1}$
satisfying condition (e). 

Next, we consider the critical case. 
Let $x_i\in D_i$ be the unique critical point of $\rho$ in $D_i$.
Since $\rho$ is strongly subharmonic, the Morse index of
$\rho$ at $x_i$ is either $0$ or $1$. 

If the Morse index is $0$, the point 
$x_i$ is a local minimum of $\rho$, 
and hence a new connected component of the sublevel set $\{\rho\le t\}$
appears when $t$ passes the value $\rho(x_i)$. On this new 
component of $K_{i+1}$ we can take $f_{i+1}$ to be any nondegenerate
$\Jscr$-holomorphic map to $A$. 
On the remaining connected components of $K_{i+1}$ we 
proceed as in the noncritical case explained above.

Assume now that $\rho$ has Morse index $1$ at $x_i$. 
In this case, there is a smooth embedded arc 
$x_i \in E_i\subset D_i\cup bK_i$,  which is transversely attached 
with both endpoints $bE_i=\{p_i,q_i\} \subset bK_i$ to $K_i$ 
and is otherwise disjoint from $K_i$, such that $S_i=K_i\cup E_i$ 
is a Runge admissible set in $X$ (see Definition \ref{def:admissible}),  
and $S_i$ is a strong deformation retract of $K_{i+1}$. 
(See \cite[pp.\ 21--22]{AlarconForstnericLopez2021} for the details.) 
We assume that the Runge admissible set $S_i=K_i\cup E_i$ 
is connected, since on the remaining components of $K_i$
we are faced with the noncritical case described above. 
We orient $E_i$ so that $p_i$ is the initial endpoint and $q_i$ is the final
endpoint. The inductive hypothesis implies that 
there is an $A$-immersion $h_i:B\times K_i\to \C^n$
satisfying 
\[
	2\di_{J_b}h_i(b,\cdotp)=f_i(b,\cdotp)\theta_b
	\quad\text{for all $b\in B$}.
\]  
We now extend $f_i$ from a small open neighbourhood $U_i$ of 
$B\times K_i$ to a map $f_i:U_i\cup (B\times S_i)\to A$
of class $\Cscr^l$ such that the extended map is homotopic to $f_1$ 
through a homotopy that is fixed on $U_i$, and for every $b\in B$ 
the map $f_i(b,\cdotp):E_i\to A$ is nondegenerate 
(see Definition \ref{def:nondegenerate}) and satisfies
\begin{equation}\label{eq:intE_i}
	\int_{E_i} f_i(b,\cdotp) \theta_b = h_i(b,q_i)-h_i(b,p_i),\quad b\in B.
\end{equation}
An extension with these properties is found as follows. 
(For more details, see 
\cite[proof of Theorem 4.1]{ForstnericLarusson2019CAG} 
and note that the choice of the complex structure on $X$
does not play any role in these arguments.)

We subdivide the arc $E_i$ in two subarcs such that 
$E_i=E_{i,1}\cup E_{i,2}$ and $E_{i,1}\cap E_{i,2}=\{x_i\}$ 
(this point lies in the relative interior of $E_i$). 
We begin by extending $f_i$ from $U_i$ to a map 
$f'_i:B\times E_i\to A$
satisfying the required homotopy condition and such that 
it is nondegenerate on $B\times E_{i,2}$; this is possible by 
the transversality argument in the proof of 
Lemma \ref{lem:nondegenerate}. Choose a number
$\epsilon>0$ to be specified later. Using 
\cite[Lemma 3.1]{ForstnericLarusson2019CAG}, 
we modify $f'_i$ on the relative interior of $B\times E_{i,1}$
to a map $f_i'':B\times E_{i}\to A$ satisfying 
\begin{equation}\label{eq:intE_ialmost}
	\Big| \int_{E_i} f''_i(b,\cdotp) \theta_b  
	- \big(h_i(b,q_i)-h_i(b,p_i)\big)\Big| <\epsilon, \quad b\in B.
\end{equation}
In the final step, $f''_i$ is modified on the relative interior of 
$B\times E_{i,2}$ to obtain a map $f_i$ satisfying 
\eqref{eq:intE_i}. To do this, we 
construct a spray of maps with the core $f''_i$, with the
deformation supported on the relative interior of $B\times E_{i,2}$,
which is period dominating on the arc $E_i$ 
(this is possible since $f''_i$ is nondegenerate on $B\times E_{i,2}$).
Assuming as we may that $\epsilon>0$ was 
chosen small enough, a suitable map in this spray corrects 
the small error in \eqref{eq:intE_ialmost} to zero. 

We can now proceed as in the noncritical case.
Let $\Ccal=\{C_1,\ldots,C_m\}$ be a homology basis of 
the admissible set $S_i$ 
such that $C=\bigcup_{j=1}^m C_j$ is a Runge set
(see \cite[Lemma 1.12.10]{AlarconForstnericLopez2021}).
If the arc $E_i$ connects two different connected components 
of $K_i$, we add it to the family $\Ccal$. 
(In the opposite case, $E_i$ is a part of a new closed curve
in the homology basis of $S_i$.) 
As in the noncritical case, we construct a $\Ccal$-period
dominating spray $F_i:\B\times B\times S_i\to A$ 
of the form \eqref{eq:Psi}, where $\B\subset \C^N$ is a ball, such that 
$F_i(0,\cdotp,\cdotp)=f_i$ and the map $F_i(\cdotp,b,\cdotp)$ is 
holomorphic in the complex structure $\Jst \times J_b$ on $\B\times K_i$
for every $b\in B$. By Theorem \ref{th:Mergelyan-manifold}
(if $l=0$) or Theorem \ref{th:Mergelyan-admissible} (if $l>0$)
we can approximate $f_i$ in $\Cscr^l(B\times S_i)$ by 
$\Jscr$-holomorphic maps $f'_i:B\times V_i\to A$, where $V_i$
is a neighbourhood of $S_i$. Likewise, 
we can approximate the $\Jscr$-holomorphic functions $\xi_j$ 
in \eqref{eq:Psi} in $\Cscr^l(B\times S_i)$ by functions $\xi'_j$ 
which are $\Jscr$-holomorphic on $B\times V_i$. Pick a number $0<r<1$. 
Inserting these approximants in the expression \eqref{eq:Psi} for 
the spray $F_i$ and shrinking the neighbourhood $V_i$ around 
$S_i$ if necessary gives a map $G_i:r\B \times B\times V_i\to A$
approximating $F_i$ in $\Cscr^l(r\B\times B\times S_i)$ 
such that $G_i(\cdotp,b,\cdotp)$ is holomorphic in the complex structure 
$\Jst \times J_b$ on $r\B\times X$ for every $b\in B$. 

The final step is exactly as in the noncritical case,
and we obtain a nondegenerate $\Jscr$-holomorphic 
map $\tilde f_{i}:B\times V_i\to A$ approximating
$f_i$ in $\Cscr^l(B\times K_i)$ such that for every $b\in B$,
the $J_b$-holomorphic map $\tilde f_{i}(b,\cdotp):V_i\to A$ 
satisfies $\Pcal(b,\tilde f_{i})=\Pcal(b,f_i)$ 
(see \eqref{eq:periodmap}) on all curves in $\Ccal$.
(Hence, the periods of $\tilde f_{i}$ vanish
on all closed curves in $\Ccal$.) 
Next, we extend $\tilde f_{i}$ by approximation in $\Cscr^l(B\times S_i)$ 
to a $\Jscr$-holomorphic map $f_{i+1}:B\times K_{i+1}\to A$, 
keeping the periods on the curves in $\Ccal$ unchanged. 
This is accomplished by the noncritical case since 
$S_i$ has a compact neighbourhood $S'_i\subset V_i$ such 
that $K_{i+1}\setminus S'_i$ is an annulus. We conclude the induction 
step by choosing $\epsilon_{i+1}$ satisfying condition (e).
\end{proof}

The following h-principle for families of conformal minimal immersions
is an immediate corollary to Theorem \ref{th:directed}
applied with the null quadric $\boldA$ \eqref{eq:nullq}.

%
%
\begin{corollary}\label{cor:minimal}
Let $X$, $\{J_b\}_{b\in B}$, and $\{\theta_b\}_{b\in B}$ 
be as in Theorem \ref{th:directed}.
Given a continuous map $f_0:B\times X\to \boldA$
to the punctured null quadric $\boldA\subset\C^n$ \eqref{eq:nullq}
for some $n\ge 3$, there is a map $u:B\times X\to \R^n$ 
of class $\Cscr^{l,k+1}$ such that $u_b=u(b,\cdotp):(X,J_b)\to \R^n$ 
is a nonflat conformal minimal immersion for every $b\in B$, 
and the $\Jscr$-holomorphic map $f:B\times X\to \boldA$, 
defined by $f(b,\cdotp)=\di_{J_b} u_b/\theta_b$ 
for all $b\in B$, is homotopic to $f_0$. 
\end{corollary}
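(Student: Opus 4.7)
The plan is to deduce Corollary \ref{cor:minimal} by directly applying Theorem \ref{th:directed} with $A=\boldA$ and then converting the resulting family of holomorphic null immersions into a family of conformal minimal immersions by taking (twice the) real part. First I would verify that the null quadric $\boldA$ \eqref{eq:nullq} meets the hypotheses on $A$. It is the punctured affine cone over the smooth projective quadric $Q=\{[z]\in\CP^{n-1}:\sum z_i^2=0\}$, which is homogeneous under $\mathrm{SO}(n,\C)$ and hence Oka; by \cite[Theorem 5.6.5]{Forstneric2017E} this implies that $\boldA$ is a smooth Oka cone. Moreover, for $n\ge 3$ the span of $\boldA$ is all of $\C^n$ (the vectors $e_j\pm\imath e_k$ with $j\ne k$ belong to $\boldA$), so $\boldA$ is not contained in any hyperplane.

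Next I would feed the given continuous map $f_0:B\times X\to\boldA$ into Theorem \ref{th:directed}. This produces a nondegenerate $\boldA$-immersion $h:B\times X\to\C^n$ of class $\Cscr^{l,k+1}$ such that the $X$-holomorphic map $(b,x)\mapsto dh_b(x)/\theta_b$ from $B\times X$ to $\boldA$ is homotopic to $f_0$. Since by \cite[Lemma 3.1.1]{AlarconForstnericLopez2021} the $\boldA$-nondegeneracy of $h_b$ is equivalent to nonflatness, each $h_b:(X,J_b)\to\C^n$ is a nonflat holomorphic null immersion.

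I would then set $u:=2\,\mathrm{Re}\,h:B\times X\to\R^n$, which is of class $\Cscr^{l,k+1}$ and homotopic to $2\,\mathrm{Re}\,h^{(0)}$ for any initial lift. For each $b\in B$, the classical correspondence between holomorphic null curves and conformal minimal surfaces (see e.g.\ \cite[Chapter 2]{AlarconForstnericLopez2021}) implies that $u_b=2\,\mathrm{Re}\,h_b:(X,J_b)\to\R^n$ is a conformal harmonic, hence conformal minimal, immersion; nonflatness of $h_b$ over $\R$ is equivalent to nonflatness of $u_b$ over $\R$ since $u_b(X)$ lies in a real affine line of $\R^n$ iff $h_b(X)$ lies in a complex affine line of $\C^n$. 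Finally, writing $u_b=h_b+\overline{h_b}$ and using that $h_b$ is $J_b$-holomorphic gives $\di_{J_b}u_b=\di_{J_b}h_b=dh_b$, so $\di_{J_b}u_b/\theta_b=dh_b/\theta_b=f(b,\cdotp)$, which is homotopic to $f_0$ by construction.

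There is no real obstacle here: the work has been done in Theorem \ref{th:directed}, and the only item to check carefully is the precise dictionary between nondegenerate holomorphic null immersions and nonflat conformal minimal immersions with prescribed derivative data, which is standard and parameter-free in the variable $b$.
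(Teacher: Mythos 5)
Your proposal is correct and follows precisely the route the paper intends, since the paper itself states that Corollary \ref{cor:minimal} is ``an immediate corollary to Theorem \ref{th:directed} applied with the null quadric $\boldA$''. You supply the standard details: $\boldA$ is the punctured cone over the smooth projective quadric, which is homogeneous and hence Oka, and $\boldA$ spans $\C^n$; applying Theorem \ref{th:directed} gives a nondegenerate family of null immersions $h_b$, and taking (twice the) real part produces the conformal minimal family with $\di_{J_b}u_b = dh_b$. One small terminological slip: a flat minimal surface in $\R^n$ is one contained in an affine $2$-plane, not a line, so the equivalence you want is ``$h_b(X)$ lies in a complex affine line iff $u_b(X)$ lies in a real affine $2$-plane''; more to the point, both nonflatness notions are equivalent to the nondegeneracy (in the sense of Definition \ref{def:nondegenerate}) of the common derivative map $dh_b/\theta_b = \di_{J_b}u_b/\theta_b : X\to\boldA$, which is exactly what Theorem \ref{th:directed} provides, so the conclusion stands.
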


This result can be improved by adding approximation and 
prescribing the flux homomorphisms. We invite the reader to supply 
the precise statements and proofs of these generalisation.

By adding various global conditions on the maps in Theorem \ref{th:directed}
and Corollary \ref{cor:minimal} such as properness, embeddedness, or completeness, the construction methods become more intricate,
and we do not know whether they can be made in families.
We pose the following problems.

%
%
\begin{problem}\label{prob:problems}
Let $\{(X,J_b)\}_{b\in B}$ be a family of open Riemann surfaces 
as in Theorem \ref{th:Oka}.
\begin{enumerate}[\rm (a)]
\item Is there a continuous or a smooth family of proper 
$J_b$-holomorphic immersions $X\to\C^2$ 
and embeddings $X\hra\C^3$?
(The basic case is classical, see \cite[Theorem 2.4.1]{Forstneric2017E}
and the references therein. Without the properness condition,
the affirmative result is given by Theorem \ref{th:directed}.)

\item Assuming that $A\subset \C^n$ is an Oka cone \eqref{eq:A}, 
is there a continuous or a smooth 
family of proper $J_b$-holomorphic $A$-immersions or $A$-embeddings
$X\to\C^n$? (For the basic case, see \cite{AlarconForstneric2014IM}.
Without the properness or embeddednes condition, the affirmative 
answer is given by Theorem \ref{th:directed}.)
\item Is there a continuous or a smooth 
family of proper conformal harmonic immersions
$(X,J_b) \to \R^n$ for $n\ge 3$?
(For the basic case, see \cite[Theorem 3.6.1]{AlarconForstnericLopez2021} 
and the references therein.)
\item
Assume that $X$ is a bordered Riemann surface. Is there a family 
of complete conformal minimal immersions $(X,J_b)\to\R^n$ $(n\ge 3)$
with bounded images, i.e., does the Calabi--Yau phenomenon
for minimal surfaces hold in families? (For the nonparametric case, see 
\cite[Chapter 7]{AlarconForstnericLopez2021} and \cite{Alarcon2024Cantor}.) 
The analogous question can be asked for holomorphic (directed) immersions
$(X,J_b)\to\C^n$, $n\ge 2$ in the context of the problem asked by 
Yang \cite{Yang1977JDG}; see the survey by Alarc\'on \cite{Alarcon2024Yang}.
\item
Let $\eta=dz+\sum_{j=1}^n x_j dy_j$ be the standard complex
contact form on $\C^{2n+1}$, $n\ge 1$. Is there a continuous
or a smooth family of proper $J_b$-holomorphic Legendrian immersions
$f_b:X\to\C^{2n+1}$, that is, such that $f_b^*\eta=0$ holds 
for all $b\in B$? (For the basic case, see 
\cite{AlarconForstnericLopez2017CM}. For the parametric case 
for maps from a single Riemann surface and without the properness
condition, see \cite{ForstnericLarusson2018MZ}.) 
\end{enumerate}
\end{problem}

%
Note that problem (a) is a special case of (b) with the cone $A=\C^n_*$
for $n=2$ and $n=3$. 

After the completion of this paper, 
a partial affirmative answer to Problem \ref{prob:problems} (a) was given by 
Drinovec Drnov{\v s}ek and Kali{\v s}nik \cite{DrinovecKalisnik2026}.
They proved that for any smooth open surface $X$,  
endowed with a family of complex structures $\{J_b\}_{b\in B}$ 
depending continuously on the parameter $b$ in a metrisable space $B$,  
there is a continuous family of proper holomorphic maps 
$F_{b}:(X,J_b)\to\mathbb C^{2}$, $b\in B$. 
%
%

It was recently observed by Alarc\'on 
and the author \cite[p.\ 3]{AlarconForstneric2026proper} (May 2026)
that, at least for parameter spaces $B$ as in Theorem \ref{th:Oka},
the proof in \cite{DrinovecKalisnik2026} can be upgraded,
using also Corollary \ref{cor:GunningNarasimhan} with approximation on admissible Runge subsets, to obtain a continuous family of proper 
$J_b$-holomorphic maps $F_b=(F_{b,1},F_{b,2}):X\to \C^2$ such that every 
component function $F_{b,i}:X\to\C$ $(b\in B,\ i=1,2)$ 
is an immersion. In particular, $F_b:X\to\C^2$ is a proper immersion
for every $b\in B$. See also \cite[Corollary 1.3]{AlarconForstneric2026proper}
for an explicit result in this direction.

The paper \cite{AlarconForstneric2026proper} 
also gives partial affirmative answers to problems (b) and (c) above.
It is shown that if $B$ is a compact Euclidean neighbourhood retract
and $\Jscr=\{J_b\}_{b\in B}$ is a continuous 
family of complex structures of local H\"older class 
$\Cscr^\alpha$ $(0<\alpha<1)$ on an open surface $X$, then there is 
a continuous family of proper $J_b$-conformal minimal
immersions $u_b:X\to\C^3$, $b\in B$, with an arbitrary given 
continuous family of flux homomorphisms 
$\mathrm{Flux}_{u_b}:H_1(X,\Z)\to\R^3$; see 
\cite[Theorem 1.1]{AlarconForstneric2026proper}.
In particular, there is a $\Jscr$-holomorphic map 
$F=(F_1,F_2,F_3):B\times X\to \C^3$
such that for every $b\in B$, the map 
$F_b=F(b,\cdotp)=(F_{b,1},F_{b,2},F_{b,3}):X\to\C^3$
is a $J_b$-holomorphic null immersion and 
the map $(\Re F_{b,1},\Re F_{b,2}):X\to\R^2$ is proper;
see \cite[Corollary 1.2]{AlarconForstneric2026proper}. 

Proper embeddings of smooth Cartan manifolds 
of type $(m,n)$ (with complex leaves of dimension $n$ 
and real codimension $m$) into $\R^{2m}\times \C^{m+2n+1}$ 
were constructed by Jurchescu \cite[Sect.\ 7]{Jurchescu1988RRMPA}.

%
%
%
%
\medskip 
\noindent {\bf Acknowledgements.} 
The author is supported by the European Union (ERC Advanced grant HPDR, 101053085) and grants P1-0291 and N1-0237 from ARIS, Republic of Slovenia. I wish to thank Oliver Dragi\v cevi\'c, Paul Gauthier, 
Jure Kali\v snik, Yuta Kusakabe, 
Finnur L\'arusson, Jaka Smrekar, Sa\v so Strle, and 
Tja\v sa Vrhovnik for remarks which helped me to improve the
presentation. I also thank Francisco J.\ L\'opez
for having asked whether complex analysis could be used to 
construct families of minimal surfaces of prescribed conformal types
in Euclidean spaces. This question was the principal motivation for the paper,
and it is answered in part by Theorem \ref{th:directed} 
and Corollary \ref{cor:minimal}. Last but not least, 
I thank an anonymous referee for very helpful and pointed 
remarks which helped me to improve the presentation, and
for drawing my attention to the works of Jurchescu and others
on Cartan manifolds.




\end{document}